\newtheorem{theorem}{Theorem}[section]
\newtheorem{lemma}[theorem]{Lemma}
\newtheorem{proposition}[theorem]{Proposition}
\theoremstyle{definition}
\newtheorem{remark}[theorem]{Remark}
\numberwithin{equation}{section}
\renewcommand{\labelenumi}{\roman{enumi})}
\renewcommand\theenumi\labelenumi
\renewcommand{\leq}{\leqslant}
\renewcommand{\le}{\leqslant}
\renewcommand{\geq}{\geqslant}
\renewcommand{\ge}{\geqslant}
\newcommand{\tl}{\tilde}
\newcommand{\Be}{\begin{equation}}
\newcommand{\Ees}{\end{equation*}}
\newcommand{\Bes}{\begin{equation*}}
\newcommand{\Ee}{\end{equation}}
\newcommand{\R}{\mathbb{R}}
\newcommand{\E}{\mathbb{E}}
\newcommand{\e}{\varepsilon}
\newcommand{\eps}{\epsilon}
\newcommand{\PP}{\mathbb{P}}
\newcommand{\N}{\mathbb{N}}
\newcommand{\mcl}{\mathcal}
\newcommand{\dif}{\mathrm{d}}
\begin{document}
\title[Approximation the limiting diffusion of G/Ph/n+GI and related asymptotics]
{An approximation to the  invariant measure of the limiting diffusion of G/Ph/n+GI queues in the Halfin-Whitt regime and related asymptotics}

\author[X. H. Jin]{Xinghu Jin}
\address{Xinghu Jin: 1. Department of Mathematics,
Faculty of Science and Technology,
University of Macau,
Av. Padre Tom\'{a}s Pereira, Taipa
Macau, China; \ \ 2. UM Zhuhai Research Institute, Zhuhai, China.}
\email{yb77438@connect.um.edu.mo}

\author[G. Pang]{Guodong Pang}
\address{Guodong Pang: Department of Computational Applied Mathematics and Operations Research, 
George R. Brown College of Engineering,
Rice University,
Houston, TX 77005}
\email{gdpang@rice.edu}

\author[L. Xu]{Lihu Xu*}
\address{Lihu Xu: 1. Department of Mathematics,
Faculty of Science and Technology,
University of Macau,
Av. Padre Tom\'{a}s Pereira, Taipa
Macau, China; \ \ 2. UM Zhuhai Research Institute, Zhuhai, China.
Corresponding Author.}
\email{lihuxu@umac.mo}

\author[X. Xu]{Xin Xu}
\address{Xin Xu: 1. Department of Mathematics,
Faculty of Science and Technology,
University of Macau,
Av. Padre Tom\'{a}s Pereira, Taipa
Macau, China; \ \ 2. UM Zhuhai Research Institute, Zhuhai, China.}
\email{yb77439@umac.mo}

\keywords{$G/Ph/n+GI$ queues; Halfin-Whitt regime; Multi-dimensional diffusion with piecewise-linear drift; Euler-Maruyama scheme; Central limit theorem; Moderate deviation principle; Stein's equation; Malliavin calculus; Weighted occupation time}
\subjclass[2010]{Primary: 60H15; 60G51; 60G52}

\begin{abstract}
In this paper, we develop a stochastic algorithm based on the Euler--Maruyama scheme to approximate the invariant measure of the limiting multidimensional diffusion of  $G/Ph/n+GI$ queues in the Halfin-Whitt regime. 
Specifically, we prove a non-asymptotic error bound between the invariant measures of the approximate model from the algorithm and the limiting diffusion.  
To establish the error bound, we employ the recently developed Stein's method for multi-dimensional diffusions, in which the regularity of Stein's equation developed by Gurvich (2014, 2022) plays a crucial role.

We further prove the central limit theorem (CLT) and the moderate deviation principle (MDP)  for the occupation measures of the limiting diffusion of $G/Ph/n+GI$ queues and its Euler-Maruyama scheme. In particular, the variances  in the CLT and MDP associated with the limiting diffusion are determined by Stein's equation and Malliavin calculus, in which properties of a mollified diffusion and an associated weighted occupation time play a crucial role.
\end{abstract}

\maketitle

\tableofcontents\thispagestyle{plain}

\section{Introduction} \label{Introduction}

A fundamentally important result in heavy-traffic queueing theory is the validity of diffusion approximations, that is, the interchange of limits property.
It provides an approximation of the steady state distribution of the queueing processes in a heavy-traffic regime by using the invariant measure of the limiting diffusion.
For instance, the interchange of limits results are proved for stochastic networks in \cite{BL,GZ,G14,YY16,YY18} and for many server queues \cite{AHP1,DDG1,GS,HAP22-OR,HAP22,S15}.
For some queueing models, the invariant measures of the limiting diffusions can be explicitly characterized \cite{DDG1,HW87}.
When this is impossible, numerical schemes are often drawn upon to compute the invariant measures, for example, computation of invariant measures of reflected Brownian motions in \cite{BCR1,DH92,DH1}. Our paper  is of similar flavor as Budhiraja et al. \cite{BCR1} where a Euler scheme approximation is developed for the constrained diffusions arising as scaling limits of stochastic networks.

In this paper, we focus on  $G/Ph/n+GI$ queues in the Halfin--Whitt regime. 
For many-server queues with exponential services, the limiting diffusions of the scaled queueing processes are one-dimensional with a piecewise-linear drift, whose steady state distributions have explicit expression as shown in \cite{BW,DDG1}. However, for  many-server queues with phase-type service time distributions, the limiting diffusions are multidimensional with a piecewise-linear drift, as shown in \cite{DHT1,PR}. Although the validity of diffusion approximations is proved for the $GI/Ph/n+M$ queues with renewal arrivals and exponential patience times in the Halfin--Whitt regime in Dai et al. \cite{DDG1}, the multi-dimensional limiting diffusion does not have an explicit invariant measure as shown in Dieker and Gao \cite{DG1}.
In fact, characterization of multi-dimensional piecewise diffusions has been left  as an open problem thus far in Browne et al. \cite{BW}. The objective in this paper is to provide an approximation for the invariant measure of the limiting diffusion of  $G/Ph/n+GI$ queues in the Halfin-Whitt regime. As a consequence, our result also provides an approximation for the steady-state of the diffusion-scaled queueing processes for  $GI/Ph/n+M$ queues in the Halfin--Whitt regime, given the justification of interchange of limits in Dai et al. \cite{DDG1}.  Moreover, 
 an approximation error bound is obtained for  the steady-state of the $M/Ph/n+M$ queues by applying the results in Gurvich \cite{Gur1} and Braverman and Dai \cite{BD1} (see Remark \ref{rem-prelimit-approx}).

\subsection{Summary of results and contributions}
The limiting diffusion  $(X_t)_{t\geq 0}$ satisfies the following stochastic differential equation (for short, SDE):
\begin{eqnarray}\label{hSDEg}
\dif X_{t} &=& g (X_{t} ) \dif t + \sigma \dif B_t
\end{eqnarray}
with $(B_t)_{t\geq 0}$ being a $d$-dimensional standard Brownian motion and
\begin{eqnarray*}
g(x) &=& -\beta p-Rx+(R-\alpha I)p({\rm e}' x)^+, \ \ \forall x\in \R^d\,.
\end{eqnarray*}
Here  $y^+=\max\{0,y\}$ for all $y\in \R$,   $\alpha> 0$ is the patience rate,  $\beta$ is the slack in the arrival rate relative to a critically loaded system,  $p \in \R^d$ is a vector of non-negative entries whose sum is equal to one, ${\rm e}=(1,1,\cdots,1)'$ with $'$ denoting the transpose,  $I$ is the identity matrix,
 \begin{eqnarray*}
R \ = \ (I-P') \textrm{diag}(v), \quad \frac{1}{\zeta} \ = \ {\rm e}' R^{-1}p, \quad
\gamma \ = \ \zeta R^{-1}p,
\end{eqnarray*}
where $v=(v_1,\cdots,v_d)$ with $v_k$ being the service rate in phase $k$,
 and $P$ be a sub-stochastic matrix describing the transitions between service phases such that $P_{ii}=0$ for $i=1,\cdots,d$, and $I-P$ being invertible (see Dai et al. \cite[Section 2.2]{DHT1}).
 Assume that the Ph phase distribution has mean $1$, that is, $\zeta=1$. It is easy to check ${\rm e}' \gamma=1$.
 $\sigma \sigma'$ has the following form:
\begin{eqnarray*}
\sigma \sigma' &=& \textrm{diag} (p)c_a^2 + H^{(0)} + \sum_{k=1}^d \gamma_k v_k H^{(k)} + (I- P')\textrm{diag}(v) \textrm{diag}(\gamma)(I-P),
\end{eqnarray*}
where $c_a^2>0$ is a constant, $\gamma=(\gamma_1,\cdots,\gamma_d)$, $H^{(k)}=(H^{(k)}_{ij})_{1\leq i, j \leq d}\in \R^{d\times d}$ with $H_{ii}^{(k)} = P_{ki}(1-P_{ki})$ and $H_{ij}^{(k)}=-P_{ki}P_{kj}$ for $j\neq i$ and $k =1, \dots, d$, and $H^{(0)}_{ii} = p_i(1-p_i)$ for $i=1,\dots,d$ and  $H^{(0)}_{ij} = -p_ip_j$ for $i\neq j$. Throughout this paper, we assume that there exists some constant $c>0$ such that  $\xi^{'}\sigma\sigma^{'}\xi \geq c\xi^{'}\xi$ for all  $\xi \in \R^d$. It is shown in Dieker and Gao \cite[Theorem 3]{DG1} that  the diffusion  $(X_t)_{t\geq 0}$ is exponentially ergodic and admits a unique invariant measure $\mu$. It is well known that for every $x\in \R^d$, SDE (\ref{hSDEg}) has a unique solution $(X^{x}_t)_{t\ge 0}$ starting from $x$ and the solution is nonexplosive from \eqref{e:AV} below and Meyn and Tweedie \cite[Theorem 2.1]{MT2}.
\begin{remark}
This diffusion $X_t$ is shown to be the limit of the diffusion-scaled queueing processes at all phases of the $G/Ph/n+GI$ queues in the Halfin--Whitt regime \cite{DHT1} assuming that the patience time distribution $F$ satisfies $F(0)=0$ and $\alpha :=\lim_{x\downarrow 0} F(x)/x<\infty$ and the service rate equals to 1.
In the covariance $\sigma \sigma'$,   $c_a^2$ captures the variability in the arrival process, specifically, when the arrival process is renewal, then $c_a^2$ is equal to the squared coefficient of variation of the interarrival times. The explicit form of $\sigma$ is not important for the result in Dieker and Gao \cite{DG1} and in our work.
\end{remark}

\smallskip

The {\bf Euler--Maruyama scheme} that we design to approximate the invariant measure $\mu$ of $(X_t)_{t\geq 0}$ reads as the following:
\begin{eqnarray}\label{e:XD}
\tl{X}_{k+1}^{\eta}
&=& \tl{X}_{k}^{\eta}+g(\tl{X}_{k}^{\eta}) \eta + \sqrt{\eta} \sigma \xi_{k+1},
\end{eqnarray}
where $k\in \mathbb{N}_0\triangleq \mathbb{N}\cup\{0\}$, $\tl{X}^{\eta}_{0}$ is the initial value and $\{\xi_k\}_{k\in \mathbb{N}}$ are the independent standard $d$-dimensional Gaussian random variables. $\tl{X}^{\eta,x}_k$ is the value of $k$-step with initial value $\tl{X}^{\eta}_0=x$.

Our first main result is the following theorem about this Euler--Maruyama (EM) scheme, which provides a non-asymptotic estimate for the error between the ergodic measures of the SDE and its EM scheme.

\begin{theorem}\label{thm:DDE}
$(\tl{X}_{k}^{\eta})_{k\in \mathbb{N}_0}$ defined by \eqref{e:XD} admits a unique invariant measure $\tl \mu_{\eta}$ and is exponentially ergodic. Moreover, the following two statements hold. 

(i) For any small enough $\varsigma \in (0,1)$, there exists some positive constant $C_\varsigma$, depending on $\varsigma$ but not on $\eta$, such that
\begin{eqnarray*}
d_{W}(\mu,\tl \mu_{\eta}) \ \leq \ C_\varsigma \eta^{\frac{1-\varsigma}{2}},
\end{eqnarray*}
where $d_W$ is the Wasserstein-1 distance, see \eqref{e:dW} below for the definition.

(ii)  For any (small) error $\delta>0$ and any small enough $\varsigma\in (0,1)$ in (i), taking $\eta=\delta^\frac{2}{1-\varsigma}$, we can run the EM algorithm $N:=O( \delta^{\frac{2}{\varsigma-1}} |\log \delta|)$ steps so that the law of $\tl{X}^\eta_N$, denoted as $\mcl L(\tl{X}^\eta_N)$, satisfies
$$d_W(\mcl L(\tl{X}^\eta_N),\mu) \ \leq \ \delta.$$
\end{theorem}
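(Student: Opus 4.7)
The plan is to use Stein's method in the form developed for piecewise-linear-drift diffusions, together with a contraction argument for the Euler--Maruyama (EM) chain. The two external ingredients that I would rely on are: (a) the regularity estimates of Gurvich for the Stein equation $\mcl L u_h = h - \mu(h)$, which for any $1$-Lipschitz $h$ control the first and second derivatives of $u_h$ and give a mollified control on its third derivative; and (b) the exponential ergodicity of $(X_t)$ from Dieker--Gao, which furnishes a Lyapunov function $V$ satisfying $\mcl L V \leq -c_1 V + c_2$ and a contractivity of the continuous-time semigroup in Wasserstein distance.

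\textbf{Existence and ergodicity of $\tl\mu_\eta$.} A second-order Taylor expansion of $V$ along one EM step, combined with the at-most-linear growth of $g$ and Gaussian moments of $\xi_{k+1}$, transfers the continuous-time drift into a discrete drift $\E[V(\tl X^\eta_{k+1})\mid \tl X^\eta_k] \leq (1-c_1\eta/2) V(\tl X^\eta_k) + c_2'\eta$ for all sufficiently small $\eta$. Non-degeneracy of $\sigma\sigma'$ makes $\tl X^\eta$ strongly Feller and uniformly minorized on every bounded set, so a Harris-type theorem yields the unique invariant $\tl\mu_\eta$, geometric ergodicity, and a uniform-in-$\eta$ bound $\tl\mu_\eta(V) \leq C$.

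\textbf{Proof of (i).} Writing $u_h$ for the Stein solution and $P_\eta$ for the one-step EM transition, stationarity of $\tl\mu_\eta$ gives
\begin{equation*}
\mu(h) - \tl\mu_\eta(h) \;=\; \tfrac{1}{\eta}\,\E_{\tl\mu_\eta}\!\left[\eta\,\mcl L u_h(\tl X^\eta_0) - \bigl(P_\eta u_h(\tl X^\eta_0) - u_h(\tl X^\eta_0)\bigr)\right].
\end{equation*}
A third-order Taylor expansion of $P_\eta u_h$ bounds the integrand by combinations of $\|D^2 u_h\|$ and the third derivative of $u_h$ against moments of $g(\tl X^\eta_0)$ and of $\sigma\xi$. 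The principal obstacle is the piecewise-linear kink of $g$ on the hyperplane $\{{\rm e}' x = 0\}$, which spoils genuine $C^3$ smoothness of $u_h$. I would handle this by smoothing $g$ (or equivalently $u_h$ via Gurvich's mollified representation) at scale $\eta^{1/2}$: this yields a bulk error of order $\eta$, and a boundary term near the kink of order $\eta^{1/2}$ that is amplified by an $\eta^{-\varsigma/2}$ factor from localising the third derivative. Integrating against $\tl\mu_\eta$ using the moment bound from the previous step gives $|\mu(h) - \tl\mu_\eta(h)| \leq C_\varsigma \eta^{(1-\varsigma)/2}$, and taking the supremum over $1$-Lipschitz $h$ establishes (i).

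\textbf{Proof of (ii).} Couple two EM trajectories $\tl X^{\eta,x}_k$ and $\tl X^{\eta,y}_k$ synchronously with the same noise. Under the one-sided dissipativity of $g$ inherited from the spectral structure of $R$ (the same structure that drives the continuous-time contraction in (b)), one has $\langle x-y,\,g(x)-g(y)\rangle \leq -\rho\|x-y\|^2$ for some $\rho>0$, and hence $\E\|\tl X^{\eta,x}_k - \tl X^{\eta,y}_k\|^2 \leq (1-\rho\eta)^k\|x-y\|^2$ for small $\eta$. Averaging $y$ against $\tl\mu_\eta$ gives $d_W(\mcl L(\tl X^\eta_N),\tl\mu_\eta) \leq C e^{-\rho\eta N/2}$ with $C$ absorbing first moments. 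The triangle inequality with (i) and the choices $\eta = \delta^{2/(1-\varsigma)}$ (making $d_W(\mu,\tl\mu_\eta) \leq \delta/2$) and $N$ with $\rho\eta N/2 \geq \log(2C/\delta)$ (making $d_W(\mcl L(\tl X^\eta_N),\tl\mu_\eta) \leq \delta/2$) complete the argument, the latter yielding $N = O(\eta^{-1}|\log\delta|) = O(\delta^{2/(\varsigma-1)}|\log\delta|)$ as claimed.
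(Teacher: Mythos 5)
Your overall structure—Stein's equation for part (i), geometric ergodicity plus triangle inequality for part (ii)—is aligned with the paper's, but there is one genuine gap and one notable divergence.

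\textbf{The gap, in part (ii).} You assert a global one-sided dissipativity
$\langle x-y,\,g(x)-g(y)\rangle \leq -\rho |x-y|^2$
``inherited from the spectral structure of $R$'' and use synchronous coupling to deduce $\E|\tl X^{\eta,x}_k - \tl X^{\eta,y}_k|^2 \leq (1-\rho\eta)^k |x-y|^2$. This is not justified and is, in general, false for this model. The matrix $R=(I-P')\,\mathrm{diag}(v)$ only has eigenvalues with positive real part; $R+R'$ need not be positive definite, so even the linear part $-Rx$ is not dissipative in Euclidean norm. Dieker--Gao exactly for this reason work with a quadratic form built from a matrix $\tl Q\neq I$, and their two stability inequalities for $\tl Q$ are one strict ($\tl Q(-R)+(-R)'\tl Q<0$) and one only \emph{semi}-definite ($\tl Q(-(I-p\mathrm{e}')R)+(-R'(I-\mathrm{e}p'))\tl Q\leq 0$), so even in the $\tl Q$-weighted metric one does not get a strict contraction of the flow. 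Their actual Lyapunov function $\tl V$ adds a term $(\mathrm{e}'y)^2$ and a nonlinear modification $\phi$, which is precisely what compensates for the lack of a uniform one-sided Lipschitz drift. The paper therefore does not use coupling at all for part (ii): it proves exponential ergodicity of the EM chain (Proposition \ref{p:GeneralErgodicEM}) via this Lyapunov function together with strong Feller and minorization (a Harris/Tuominen--Tweedie argument), giving the bound $d_W(\mcl L(\tl X^\eta_N),\tl\mu_\eta)\leq C\eta^{-1}e^{-cN\eta}$, and then applies the triangle inequality. Your first paragraph actually sketches this Harris route for existence, so you should invoke the quantitative rate from that argument in (ii) rather than a coupling estimate that does not hold.

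\textbf{The divergence, in part (i).} Your basic identity
$\mu(h)-\tl\mu_\eta(h)=\tfrac1\eta\,\E_{\tl\mu_\eta}[\eta\mcl A u_h - (P_\eta u_h - u_h)]$
is exactly what the paper uses (written as $0=\E f(W')-\E f(W)$ with $W\sim\tl\mu_\eta$ and then isolating $\E[\mcl A f(W)]$). But you handle the nonsmoothness of $g$ by mollifying at scale $\eta^{1/2}$ and splitting into a bulk term and a boundary term near $\{\mathrm{e}'x=0\}$. The paper does not mollify here: it uses directly the regularity of the Stein solution from Gurvich (Lemma \ref{lem:Lipregf}), in particular polynomial bounds on $\nabla^2 f$ and a local H\"older-$(1-\varsigma)$ modulus of continuity of $\nabla^2 f$ (estimate \eqref{e:3f}), together with the moment bounds $\tl\mu_\eta(|\cdot|^\ell)\leq C$ from Proposition \ref{p:GeneralErgodicEM}. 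The increment $\delta=W'-W$ is then split into $\{|\delta|<1\}$, where the H\"older modulus gives $O(\eta^{(3-\varsigma)/2})$, and $\{|\delta|\geq 1\}$, where Chebyshev plus moment bounds give $O(\eta^{3/2})$; dividing by $\eta$ yields the stated $\eta^{(1-\varsigma)/2}$. Your mollification-based sketch is plausible in spirit but much vaguer, and it does not actually reduce to the available regularity statements for the \emph{solution} $u_h$—Gurvich's result gives H\"older control of the second derivative, not a smoothing at a prescribed scale $\eta^{1/2}$, and your claimed $\eta^{-\varsigma/2}$ amplification factor is not derived. If you replace the mollification heuristic with the H\"older estimate and the moment bounds, your argument for (i) becomes essentially the paper's.

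Finally, your Taylor expansion should be stated to second order with an integral remainder (as in the paper), not a genuine third-order expansion: $u_h$ is not $C^3$ here, which is exactly why the H\"older modulus of $\nabla^2 u_h$ is the right tool.
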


Our second set of main results are the central limit theorem (CLT), the moderate deviation principle (MDP) for the long term behavior of $(X_t)_{t\geq 0}$ and $(\tl{X}^{\eta}_k)_{k\in \mathbb{N}_0}$. For any $x \in \R^d$ and $T>0$, the empirical measure $\mcl E_T^x$ of $(X^x_t)_{t \ge 0}$ is defined by
 \begin{eqnarray*}
 \mcl E_T^x(A) \ = \ \frac 1T \int_0^T \delta_{X^x_s}(A) \dif s, \ \ \ \ \  A \in \mathcal{B}(\R^d),
 \end{eqnarray*}
where $\mathcal{B}(\R^d)$ is the collection of Borel sets on $\R^{d}$, $\delta_y(\cdot)$ is a delta measure, that is, $\delta_y(A)=1$ if $y \in A$ and $\delta_y(A)=0$ if $y \notin A$. It is easy to check that for any measurable function $h: \R^d \rightarrow \R$,
$$\mcl E_T^x(h) \ = \ \frac 1T \int_0^T h(X^x_s) \dif s.$$

For any $x \in \R^d$ and $n \in \mathbb{N}$, the empirical measure $ \mcl E_n^{\eta,x} $ of $(\tl{X}^{\eta,x}_k)_{k\in \mathbb{N}_0}$ is defined by
 \begin{eqnarray*}
 \mcl E_n^{\eta,x}(A) \ = \ \frac{1}{n} \sum_{k=1}^n \delta_{\tl{X}^{\eta,x}_k}(A), \ \ \ \ \  A \in \mathcal{B}(\R^d).
 \end{eqnarray*}
 It is easy to check that for any measurable function $h: \R^d \rightarrow \R$
$$\mcl E_n^{\eta,x}(h) \ = \ \frac{1}{n} \sum_{k=1}^n h(\tl{X}^{\eta,x}_k).$$
In order to state our theorems about CLT and MDP, for a given probability measure $\nu$, we define $L^2(\nu)$ as 
the Hilbert space induced by $\nu$ with inner product 
$$\langle f_1, f_2\rangle_{\nu}=\int f_1 f_2 d \nu \ \ \ \ \ {\rm for}  \ \ \ f_1, f_2 \in L^2(\nu).$$ 
For further use, we denote by $\mathcal{B}_b(\R^d,\R)$ the set of bounded measurable functions from $\R^d$ to $\R$, and denote by ${\rm Lip}(\R^d,\R)$ the set of globally Lipschitz functions from $\R^d$ to $\R$.

\begin{theorem} [CLT] \label{thm:CLT}
For any $h\in \mathcal{B}_b(\R^d,\R)$ and $x \in \R^d$,  $\sqrt{t} \left[\mcl E^x_t(h)-\mu(h)\right]$  converges weakly to Gaussian distribution  $\mathcal{N}(0,\mu(|\sigma^{\prime}  \nabla f|^2))$ as  $t \rightarrow \infty$, where $f$ is the solution to the Stein's equation \eqref{e:SE} below. Furthermore, $\mu(|\sigma' \nabla f|^2)\leq C\|h\|^2_{\infty}<\infty$ for some constant  $C>0$.
\end{theorem}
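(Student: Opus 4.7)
The plan is to derive the CLT by the standard Stein/Poisson equation route, using the solution $f$ of the Stein equation $\mathcal{L}f = h-\mu(h)$ (where $\mathcal{L}$ is the generator of $(X_t)_{t\ge 0}$) as a corrector that turns the time average $\mcl E^x_t(h)-\mu(h)$ into a martingale plus negligible terms. Formally, if one could apply It\^o to $f$, one would obtain
\begin{equation*}
  \sqrt{t}\bigl[\mcl E_t^x(h)-\mu(h)\bigr]
  \;=\; \frac{f(X_t^x)-f(x)}{\sqrt t}
  \;-\; \frac{1}{\sqrt t}\int_0^t \nabla f(X_s^x)^{\prime}\sigma\,\dif B_s,
\end{equation*}
so the CLT would follow from (a) the boundary term going to $0$, and (b) a martingale CLT for the stochastic integral with quadratic variation $t^{-1}\int_0^t |\sigma^{\prime}\nabla f(X_s^x)|^2\dif s$ converging, by exponential ergodicity, to $\mu(|\sigma^{\prime}\nabla f|^2)$.

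The main obstacle is that the drift $g$ is only piecewise linear, hence merely Lipschitz, so the Stein solution $f$ is not $C^2$ in general and It\^o's formula cannot be applied directly. This is exactly where Gurvich's regularity theory and the paper's mollification/Malliavin toolbox enter. First I would introduce a mollified drift $g_\e$, producing a smooth diffusion $X^{\e,x}_t$ with generator $\mathcal{L}_\e$, unique invariant measure $\mu_\e$, and a smooth Stein solution $f_\e$ of $\mathcal{L}_\e f_\e = h-\mu_\e(h)$. Applying It\^o to $f_\e$ along $X^{\e,x}$ then produces the clean decomposition above with $f$, $\nabla f$, $X$ replaced by their $\e$-regularisations. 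Malliavin calculus (a Bismut-type formula involving a weighted occupation time of the mollified diffusion, as announced in the introduction) gives pointwise gradient bounds $|\nabla f_\e(x)|\le C\|h\|_\infty$ that are uniform in $\e$, which is the key input needed both to make sense of the limit $\mu(|\sigma^{\prime}\nabla f|^2)$ and to obtain the announced bound $\mu(|\sigma^{\prime}\nabla f|^2)\le C\|h\|_\infty^2$.

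Second, I would pass to the limit $\e\downarrow 0$ to transfer the It\^o identity to the original diffusion in distribution: by standard stability of SDEs with Lipschitz coefficients $X^{\e,x}\to X^x$ uniformly on compact time intervals in $L^2$, $\mu_\e\to\mu$ in Wasserstein, and the uniform gradient estimates yield tightness and convergence of $\int_0^t \nabla f_\e(X^{\e,x}_s)^{\prime}\sigma\,\dif B_s$ to $\int_0^t\nabla f(X^x_s)^{\prime}\sigma\,\dif B_s$ as a martingale. This produces the identity
\begin{equation*}
  \frac{1}{\sqrt{t}}\int_0^t \bigl[h(X^x_s)-\mu(h)\bigr]\dif s
  \;=\; \frac{f(X_t^x)-f(x)}{\sqrt t}
  \;-\; \frac{M_t}{\sqrt t},
\end{equation*}
with $M_t:=\int_0^t \nabla f(X^x_s)^{\prime}\sigma\,\dif B_s$ a square-integrable martingale.

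Third, I would prove the two asymptotic statements. The boundary term vanishes in $L^1$ (hence in probability) since the gradient estimate upgrades to a linear growth bound $|f(x)|\le C(1+|x|)\|h\|_\infty$, combined with $\sup_{s\ge 0}\E|X^x_s|^2<\infty$ (which follows from the Foster--Lyapunov inequality \eqref{e:AV} used in the paper). For the martingale, I would invoke the martingale CLT of Rebolledo: the quadratic variation $t^{-1}\langle M\rangle_t = t^{-1}\int_0^t |\sigma^{\prime}\nabla f(X^x_s)|^2\dif s$ converges a.s. to $\mu(|\sigma^{\prime}\nabla f|^2)<\infty$ by the ergodic theorem for $(X^x_s)$, and a Lindeberg-type condition follows from the uniform boundedness of $|\sigma^{\prime}\nabla f|$ established in the previous step. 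The stated bound $\mu(|\sigma^{\prime}\nabla f|^2)\le C\|h\|_\infty^2$ is a direct consequence of the uniform-in-$\e$ gradient estimate on $f_\e$. Throughout, the most delicate step is the gradient estimate via the mollified diffusion and the weighted occupation time, on which everything else rests.
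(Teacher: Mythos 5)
Your overall strategy — solve the Stein/Poisson equation $\mathcal{A}f = h-\mu(h)$, apply It\^o's formula to turn $\sqrt{t}\left[\mcl E^x_t(h)-\mu(h)\right]$ into a boundary term plus a martingale, kill the boundary term and run a martingale CLT on the stochastic integral — is indeed the paper's. However, your argument rests on a gradient estimate that is simply not available, and this creates a genuine gap. You claim the Malliavin/Bismut machinery yields $|\nabla f_\e(x)|\leq C\|h\|_\infty$ uniformly in $\e$ and hence a uniformly bounded $|\sigma'\nabla f|$. That is false in this setting: what the paper actually proves (Proposition \ref{lem:regf}) is $|f(x)|\leq C\|h\|_\infty(1+|x|^2)$ and $|\nabla f(x)|\leq C\|h\|_\infty(1+|x|^2)$, i.e.\ quadratic growth, not boundedness. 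You seem to have transplanted the linear bound $|f(x)|\le C(1+|x|)$ from Lemma \ref{lem:Lipregf}, which only applies when $h$ is Lipschitz with $h\in{\rm Lip}_0(1)$, not when $h\in\mathcal{B}_b(\R^d,\R)$. With only a bounded test function, the Stein solution cannot be expected to have bounded derivative; the Bismut formula of Lemma \ref{hLef2} produces the factor $\mathcal{I}_u^x(t)$ whose moments blow up like $t^{-1/2}$ near $t=0$, and after integrating against $e^{-\lambda t}P_t(\lambda f - h + \mu(h))$ the growth in $x$ of $f$ itself feeds back quadratically.

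This error propagates into both places where you lean on it. First, your Lindeberg condition: since $|\sigma'\nabla f|$ is not bounded, uniform integrability of the jumps is not automatic. The paper handles this by cutting the martingale into unit-length block increments $U_i = \int_{i-1}^i(\nabla f(X_s^x))'\sigma\,\dif B_s$ and verifying the McLeish conditions \eqref{e:CLT1}--\eqref{e:CLT2} by hand: the uniformly-small-maximum condition \eqref{e:CLT1} is proved by a truncation at level $\sqrt{t}$ combined with fourth-moment control of $U_i$ via Burkholder--Davis--Gundy and the Lyapunov moment bounds, while the $L^2$-convergence of the normalized quadratic variation \eqref{e:CLT2} is obtained by an explicit covariance decay argument exploiting the exponential ergodicity estimate in Lemma \ref{lem:AV2}. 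None of this is ``a direct consequence of uniform boundedness''; it is the technical core of the proof. Second, your claim that the quadratic variation converges a.s.\ ``by the ergodic theorem'' glosses over the fact that one must first know $|\sigma'\nabla f|^2\in L^1(\mu)$. The paper establishes this via $\mu(|\sigma'\nabla f|^2)\le C\mu(V^4)<\infty$, which uses the quadratic-growth gradient bound together with the moment bound on $\mu$ from Lemma \ref{lem:AV2}. Finally, your mollify-then-pass-to-the-limit step at the level of the CLT itself is a genuinely different route from the paper (the paper mollifies only inside the proof of Proposition \ref{lem:regf} and then applies It\^o directly to $f$), and passing the stochastic integral $\int_0^t\nabla f_\e(X^{\e,x}_s)'\sigma\,\dif B_s$ to the limit in distribution as $\e\to 0$, with unbounded and only locally regular integrands, is itself a nontrivial step you would need to justify carefully; as written it is not sufficient to assert tightness from the (incorrect) uniform gradient bound.
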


\begin{theorem}[MDP]\label{thm:MDP}
For any $h\in \mathcal{B}_b(\R^d,\R)$, $x \in \R^d$ and measurable set $A\subset \R$, one has
	\begin{eqnarray*}
	-\inf_{z \in A^{ {\rm o} } } \frac{ z^2 }{ 2 \mu(| \sigma^{\prime}  \nabla f |^2)  }
	\ &\leq& \
	\liminf_{t\to\infty}\frac{1}{a_t^2}\log \mathbb{P} \left( \frac{\sqrt{t}}{a_t}  \left[\mcl E^x_t(h)-\mu(h)\right]    \in A \right)  \\
	\ &\leq& \
	\limsup_{t\to\infty}\frac{1}{a_t^2}\log \mathbb{P} \left( \frac{\sqrt{t}}{a_t}  \left[\mcl E^x_t(h)-\mu(h)\right]  \in A \right)
\ \leq \ -\inf_{z \in \bar{A}} \frac{ z^2 }{ 2 \mu(| \sigma^{\prime}  \nabla f |^2)  },
	\end{eqnarray*}
	where $\bar{A}$ and $A^{ {\rm o} }$ are the closure and  interior of set $A$, respectively, and $a_t$ satisfies $a_t \to \infty$ and $\frac{a_t}{\sqrt{t}} \to 0$ as $t\to \infty$ and $f$ is the solution to the Stein's equation \eqref{e:SE} below.
\end{theorem}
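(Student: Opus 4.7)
The plan is to recast the centered time average as a continuous martingale plus a boundary term via Stein's equation, then invoke a moderate-deviation principle for continuous martingales. Let $f$ solve Stein's equation \eqref{e:SE}, i.e.\ $\mcl L f = h - \mu(h)$, where $\mcl L$ is the generator of $(X_t)_{t\ge 0}$. It\^o's formula gives
\[
f(X^x_t) - f(x) \ = \ \int_0^t [h(X^x_s) - \mu(h)]\,\dif s \ + \ M_t, \qquad M_t := \int_0^t (\sigma' \nabla f(X^x_s))'\,\dif B_s,
\]
where $M_t$ is a continuous $L^2$-martingale with quadratic variation $\langle M\rangle_t = \int_0^t |\sigma' \nabla f(X^x_s)|^2\,\dif s$. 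Dividing by $a_t\sqrt t$ yields the decomposition
\[
\frac{\sqrt t}{a_t}\bigl[\mcl E^x_t(h) - \mu(h)\bigr] \ = \ -\frac{M_t}{a_t\sqrt t} \ + \ \frac{f(X^x_t) - f(x)}{a_t\sqrt t}.
\]

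The first ingredient is that the boundary term is negligible at MDP scale: for any $\delta>0$,
\[
\limsup_{t\to\infty}\frac{1}{a_t^2}\log \PP\!\left(\frac{|f(X^x_t) - f(x)|}{a_t\sqrt t} > \delta\right) = -\infty.
\]
Gurvich's Stein-regularity bounds provide at most polynomial growth of $f$, while the exponential ergodicity of $X$ from \cite{DG1} produces a Foster--Lyapunov function with uniform-in-$t$ exponential moment bounds on $|X^x_t|$. An exponential Markov inequality, combined with the assumption $a_t/\sqrt t \to 0$, then delivers this step.

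The main ingredient is the MDP for $M_t/(a_t\sqrt t)$. By Puhalskii's MDP criterion for continuous martingales, it is enough to establish the super-exponential concentration
\[
\limsup_{t\to\infty}\frac{1}{a_t^2}\log \PP\!\left(\left|\frac{\langle M\rangle_t}{t} - v\right|>\delta\right) = -\infty, \qquad v := \mu\bigl(|\sigma'\nabla f|^2\bigr),
\]
from which $M_t/(a_t\sqrt t)$ obeys an MDP with Gaussian rate $z\mapsto z^2/(2v)$; together with the negligible boundary term, the theorem follows via exponential equivalence.

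The principal obstacle is precisely this super-exponential law of large numbers for the weighted occupation time $\langle M\rangle_t/t$. Since $|\sigma'\nabla f|^2$ is only polynomially---not uniformly---bounded, a na\"ive Chernoff/Lyapunov argument fails. To overcome this I would follow the mollified-diffusion strategy signalled in the abstract: replace the piecewise-linear drift $g$ by a smooth approximation $g_\varepsilon$, run Malliavin calculus on the resulting diffusion $X^\varepsilon$ (whose non-degenerate noise makes $\sigma'\nabla f(X^{\varepsilon,x}_s)$ Malliavin differentiable with controllable derivatives) to obtain quantitative exponential concentration for $\int_0^t |\sigma'\nabla f(X^{\varepsilon,x}_s)|^2\,\dif s$ with constants independent of $\varepsilon$, and then pass to the limit $\varepsilon\downarrow 0$ exploiting $\varepsilon$-uniform Stein-regularity bounds on $\nabla f$. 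Executing this comparison with explicit quantitative control is the main technical challenge; once in place, the upper and lower MDP bounds assemble in the standard way.
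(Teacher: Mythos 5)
Your route and the paper's diverge at the outset, and the divergence matters. The paper does not run the MDP through the martingale decomposition at all: it first verifies the hypotheses of Wu's moderate-deviation theorem for Markov processes, namely that the exponential ergodicity in the weighted total-variation norm (Lemma \ref{lem:AV2}) yields a spectral gap for $P_t$ with $1$ a simple isolated eigenvalue of modulus one (via \cite[Remark (2.17)]{WLM2}), and then invokes Wu \cite[Theorem 2.1]{WLM1} directly on the \emph{bounded} test function $h$. That theorem delivers the MDP with abstract variance $\mathcal V(h)=2\int_0^\infty\langle P_th,\,h-\mu(h)\rangle_\mu\,\dif t$. Stein's equation \eqref{e:SE} enters only afterwards, in a separate second-moment computation that identifies $\mathcal V(h)=\mu(|\sigma'\nabla f|^2)$. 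In other words, the paper never has to prove anything about the martingale $M_t$ at exponential scale; boundedness of $h$ plus the spectral gap is all that Wu's criterion needs.

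Your proposal, by contrast, leaves the heaviest lifting undone, and I doubt it can be completed as sketched. You correctly identify that Puhalskii's martingale MDP criterion reduces the problem to super-exponential concentration of $\langle M\rangle_t/t=\frac1t\int_0^t|\sigma'\nabla f(X^x_s)|^2\dif s$ around $v=\mu(|\sigma'\nabla f|^2)$, at speed $a_t^2$ for \emph{every} admissible $a_t$. But since $|\nabla f(x)|$ grows like $|x|^2$ (Proposition \ref{lem:regf}), the integrand grows like $|x|^4$, and the stationary diffusion $X_t$ has at best Gaussian tails, so $|X_t|^4$ has stretched-exponential tails with exponent $1/2$. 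Achieving super-exponential concentration at speed $a_t^2$ for $a_t$ approaching $\sqrt t$ for such a heavy-tailed additive functional is not at all routine, and a Chernoff/Lyapunov argument does indeed fail as you note. More to the point, the mollified-diffusion and Malliavin machinery you gesture at (Section \ref{sub:MolDif}) is used in the paper solely to establish the Bismut–Elworthy–Li formula and the resulting pointwise gradient bound on $f$; it gives no exponential-moment or concentration estimates for the occupation average of $|\sigma'\nabla f|^2$, and I see no way those tools, as developed here, would furnish them. So the step you flag as ``the main technical challenge'' is a genuine gap rather than a deferred detail, and the paper's choice to apply Wu's theorem to the bounded $h$ rather than to the unbounded martingale integrand is precisely what sidesteps it. The boundary-term negligibility argument you sketch, on the other hand, is fine and in fact a version of it already appears in the CLT proof; that part is not the issue.
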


\begin{theorem}[CLT and MDP]\label{thm:EMMDP}
(i). For any $h\in \mathcal{B}_b(\R^d,\R)$, $\sqrt{n} \big[ \mcl E_n^{\eta, x}(h)  -  \tl{\mu}_{\eta}(h) \big]$ converges weakly to Gaussian distribution $\mathcal{N}(0,\tl{ \mcl V}(h))$ as $n\to \infty$ with 
\begin{equation*}
	\tl{ \mcl V} (h)  \ \ = \ \   \langle f,  f \rangle_{\tl{\mu}_{\eta}}-\langle \tl{\mathcal{P}}_{\eta} f, \tl{\mathcal{P}}_{\eta} f \rangle_{\tl{\mu}_{\eta}},
	\end{equation*} 
where $f$ is the solution to the third Stein's equation \eqref{e:Stein-3} below and $\tl{\mcl P}_\eta f(x)=\E f(\tl X_1^{\eta,x})$. Moreover,  
	     \begin{eqnarray}\label{e:EMVh}
\tl{ \mcl V} (h) \ &=& \  \langle h-\tl{\mu}_{\eta}(h) , h-\tl{\mu}_{\eta}(h)   \rangle_{\tl{\mu}_{\eta}} + \sum_{k=1}^{\infty} \langle \tl{\mathcal{P}}^k_{\eta} h, h-\tl{\mu}_{\eta}(h) \rangle_{\tl{\mu}_{\eta}}.
	\end{eqnarray}	
(ii). For any $h\in \mathcal{B}_b(\R^d,\R)$, $x \in \R^d$ and measurable set $A\subset \R$, one has 
	\begin{eqnarray*}
	-\inf_{z \in A^{ {\rm o} } } \frac{ z^2 }{2 \tl{ \mcl V} (h)}
	\ &\leq& \
	\liminf_{n \to\infty}\frac{1}{a_n^2}\log \mathbb{P} \left( \frac{\sqrt{n}}{a_n}  \left[ \mcl E_n^{\eta,x}(h) - \tl{\mu}_{\eta}(h)\right]    \in A \right)  \\
	\ &\leq& \
	\limsup_{n \to\infty}\frac{1}{a_n^2}\log \mathbb{P} \left( \frac{\sqrt{n}}{a_n}  \left[ \mcl E_n^{\eta,x}(h) - \tl{\mu}_{\eta}(h)\right]  \in A \right)
\ \leq \ -\inf_{z \in \bar{A}} \frac{ z^2 }{ 2 \tl{ \mcl V} (h) },
	\end{eqnarray*}
	where $\bar{A}$ and $A^{ {\rm o} }$ are the closure and  interior of set $A$, respectively,  and $a_n$ satisfies $a_n \to \infty$ and $\frac{a_n}{\sqrt{n}} \to 0$ as $n\to \infty$.
	
\end{theorem}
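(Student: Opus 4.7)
The plan is to reduce both (i) and (ii) to statements about a martingale-coboundary decomposition driven by the solution of the discrete Poisson (Stein) equation. Concretely, let $f$ solve $f-\tl{\mcl P}_\eta f = h-\tl\mu_\eta(h)$, which is \eqref{e:Stein-3}. Because Theorem \ref{thm:DDE} asserts that $(\tl X^\eta_k)$ is exponentially ergodic, the Neumann series
\begin{equation*}
f \ = \ \sum_{k=0}^{\infty} \bigl[\tl{\mcl P}_\eta^k h - \tl\mu_\eta(h)\bigr]
\end{equation*}
converges (geometrically in a weighted sup-norm) for $h\in\mathcal{B}_b(\R^d,\R)$, so $f$ is bounded. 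Writing
\begin{equation*}
\sum_{k=1}^n \bigl[h(\tl X^{\eta,x}_k)-\tl\mu_\eta(h)\bigr]
\ = \ M_n + R_n,
\qquad
M_n := \sum_{k=1}^{n}\bigl[f(\tl X^{\eta,x}_k)-\tl{\mcl P}_\eta f(\tl X^{\eta,x}_{k-1})\bigr],
\end{equation*}
one has a martingale $(M_n)$ with bounded increments and a telescoping remainder $R_n=\tl{\mcl P}_\eta f(\tl X^{\eta,x}_0)-\tl{\mcl P}_\eta f(\tl X^{\eta,x}_n)$ that is uniformly bounded and hence negligible at both the CLT scale $\sqrt n$ and the MDP scale $\sqrt n/a_n$.

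For part (i), I would apply the martingale CLT to $M_n/\sqrt n$. The conditional variance of the increments equals
\begin{equation*}
\E\bigl[(f(\tl X^{\eta,x}_k)-\tl{\mcl P}_\eta f(\tl X^{\eta,x}_{k-1}))^2\,\big|\,\mcl F_{k-1}\bigr]
\ = \ \tl{\mcl P}_\eta(f^2)(\tl X^{\eta,x}_{k-1}) - \bigl(\tl{\mcl P}_\eta f(\tl X^{\eta,x}_{k-1})\bigr)^2,
\end{equation*}
so the ergodic theorem for $(\tl X^{\eta,x}_k)$ together with boundedness gives the a.s.\ limit $\langle f,f\rangle_{\tl\mu_\eta}-\langle \tl{\mcl P}_\eta f,\tl{\mcl P}_\eta f\rangle_{\tl\mu_\eta}=\tl{\mcl V}(h)$; the Lindeberg condition is immediate from bounded jumps. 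The alternative formula \eqref{e:EMVh} then follows by substituting the Neumann series for $f$, using stationarity of $\tl\mu_\eta$, the identity $\tl{\mcl P}_\eta f = f-(h-\tl\mu_\eta(h))$, and collecting like terms, which leaves precisely $\langle h-\tl\mu_\eta(h),h-\tl\mu_\eta(h)\rangle_{\tl\mu_\eta} + \sum_{k\ge 1}\langle \tl{\mcl P}_\eta^k h,h-\tl\mu_\eta(h)\rangle_{\tl\mu_\eta}$.

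For part (ii), the MDP is reduced to the MDP for the bounded martingale difference sequence $(M_k-M_{k-1})$ via the same decomposition: the contraction principle absorbs $R_n/(a_n\sqrt n)\to 0$, and exponential equivalence lets me replace $\mcl E_n^{\eta,x}(h)-\tl\mu_\eta(h)$ by $M_n/n$. I would then invoke a Puhalskii/Dembo--Zeitouni type MDP for martingales with bounded increments, whose hypotheses ask for (a) a superexponentially fast convergence in probability of the normalized predictable quadratic variation $\frac{1}{n}\langle M\rangle_n$ to $\tl{\mcl V}(h)$, and (b) a negligibility condition on the jumps at scale $a_n$. Condition (b) is automatic because $|M_k-M_{k-1}|\le 2\|f\|_\infty$ and $a_n/\sqrt n\to 0$. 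The rate function $z^2/(2\tl{\mcl V}(h))$ then matches the variance from part (i).

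The main obstacle is verifying (a), i.e., obtaining the superexponential concentration
\begin{equation*}
\limsup_{n\to\infty}\frac{1}{a_n^2}\log \PP\Bigl(\bigl|\tfrac{1}{n}\textstyle\sum_{k=1}^n \varphi(\tl X^{\eta,x}_{k-1}) - \tl\mu_\eta(\varphi)\bigr|>\delta\Bigr) = -\infty
\end{equation*}
for the bounded function $\varphi := \tl{\mcl P}_\eta(f^2)-(\tl{\mcl P}_\eta f)^2$ and every $\delta>0$. This is where the exponential ergodicity of Theorem \ref{thm:DDE} must be sharpened to a uniform ergodicity/Lyapunov condition strong enough to yield the MDP of occupation measures for $(\tl X^{\eta,x}_k)$ (in the spirit of Wu's functional MDP criterion); establishing this under the piecewise-linear drift $g$ relying on the Foster--Lyapunov drift used in Theorem \ref{thm:DDE} is the delicate step, whereas boundedness of $h$, and hence of $f$ via the Neumann series, keeps all other bookkeeping clean.
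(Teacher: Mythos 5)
Your proposal diverges from the paper's proof, and while part (i) is essentially fine modulo one unjustified claim, part (ii) has a genuine gap.

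For part (i), the paper simply cites Jones' Markov-chain CLT (Theorem 9) using the exponential ergodicity and moment bounds from Proposition \ref{p:GeneralErgodicEM}, and then verifies the two expressions for $\tl{\mcl V}(h)$ by a direct second-moment computation; your martingale–coboundary route (split $\sum_k[h(\tl X^{\eta,x}_k)-\tl\mu_\eta(h)]$ into a martingale $M_n$ plus a telescoping remainder $R_n$, then apply the martingale CLT and the ergodic theorem to the predictable variation) is a legitimate alternative and arrives at the same variance. However, your assertion that $f$ is bounded because it is given by a geometrically convergent Neumann series is not justified: the ergodicity established in Proposition \ref{p:GeneralErgodicEM} is $V$-uniform (with $V\asymp |\cdot|^2$), so the series gives $|f(x)|\lesssim 1+V(x)$, not $\|f\|_\infty<\infty$. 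The Markov chain is unbounded and hence not uniformly ergodic, so bounded $h$ does not yield bounded $f$. Your Lindeberg and ``bounded increments'' claims must instead be obtained from the polynomial moment bounds $\tl\mu_\eta(V^\ell)<\infty$; this is repairable for the CLT but it is a real correction, not a cosmetic one.

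For part (ii), the paper and your proposal take genuinely different routes, and your route is incomplete precisely where the work is. The paper invokes Wu's MDP criterion [Wu, Theorem 2.1] directly for the additive functional $\frac{1}{n}\sum_k h(\tl X^{\eta,x}_k)$, after using Wu's Remark (2.17) and the $V$-uniform ergodicity to establish a spectral gap and to show that $1$ is the unique eigenvalue of modulus one for $\tl{\mcl P}_\eta$; the MDP and rate function follow from that abstract result with no separate martingale step. You instead reduce to an MDP for the martingale $M_n$ and explicitly identify the superexponential concentration of $\tfrac{1}{n}\langle M\rangle_n$ as ``the delicate step'' that you do not carry out. That step is not a technicality: at the moderate-deviation scale, showing $\limsup_n a_n^{-2}\log\PP\bigl(|\tfrac1n\sum_k\varphi(\tl X^{\eta,x}_{k-1})-\tl\mu_\eta(\varphi)|>\delta\bigr)=-\infty$ for $\varphi=\tl{\mcl P}_\eta(f^2)-(\tl{\mcl P}_\eta f)^2$ (which is moreover unbounded, since $f$ is) requires essentially the same spectral-gap machinery Wu's theorem already packages; you would have to re-derive it. Likewise, your negligibility of jumps ``because $|M_k-M_{k-1}|\le 2\|f\|_\infty$'' is false for the same reason $f$ is unbounded. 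So part (ii) of the proposal does not constitute a proof; it correctly identifies, but does not fill, the central gap. The paper's route via Wu's criterion avoids both issues and matches the variance $\tl{\mcl V}(h)$ identified in part (i).
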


\begin{remark}
We shall see below that Stein's equation will play an important role in proving Theorems \ref{thm:DDE}, \ref{thm:CLT} and \ref{thm:MDP}. The proof of Theorem \ref{thm:EMMDP} is a direct application of Jones \cite[Theorem 9]{jones2004markov} and Wu \cite[Theorem 2.1]{WLM1}, but we can see that a discrete Stein's equation \eqref{e:Stein-3} below plays an important role as well from the statement of the theorem. 
\end{remark}

\subsection{Related works}

Our paper is relevant to the following streams of works in the literature.

{\it (a) Steady state analysis of many-server queues.}
A few significant results have been obtained for understanding the steady-state of many-server queues, see, e.g., \cite{AR20,AHP1,AHPS1,APS1,BD1,BDF,GG13b,GG13a,Gur1,HAP22-OR,HAP22} and references therein.
The most relevant to us is the recent development using Stein's method to analyze the steady state of queueing processes via diffusion approximations.
Gurvich \cite{Gur1} provides a framework of analyzing the steady states via direct diffusion approximations (rather than diffusion limits) for a family of continuous-time exponentially ergodic Markov processes,
in particular, the gap between the steady-state moments of the diffusion models and those of the Markov processes is characterized. This result can be applied to Markovian many-server queueing systems.
Braverman et al.  \cite{BDF} introduced the Stein's method framework formally, proving Wasserstein and Kolmogorov distances between the steady-state distributions of the queueing processes and approximate diffusion models, and applied to the classical Erlang A and C models, where the bound is characterized by the system size.
Braverman  and Dai \cite{BD1} then extended this approach for the $M/Ph/n+M$ queues.
 Braverman et al. \cite{BDF20}  recently studied high order steady-state approximations of 1-dimensional Markov chains and applied to Erlang C models.

As discussed before, the invariant measure of the limiting diffusion of $G/Ph/n+GI$ queues in the Halfin--Whitt regime lacks an explicit expression and is difficult to compute directly. Dai and He \cite{DH1} proposed a numerical method to compute the invariant measure of the limiting diffusion  by solving the basic adjoint relationship, however, their work does not give theoretical error bounds. 
In this work, we provide a stochastic algorithm based on the EM scheme to compute the invariant measure for the limiting diffusion. More importantly,  our work characterizes the non-asymptotic error bound in terms of the step size in the algorithm. 

Although this approach uses discretization, in comparison with the discrete event simulation approach, the EM scheme is much more efficient computationally, especially when the number of servers $n$ is large in the queueing model.

It is worth noting that for the models with the one-dimensional limiting diffusions of linear or piecewise-linear drifts, since the invariant measure has an explicit density, when studying the steady-state approximation problem by Stein's method, one can explicitly solve Stein's equation and obtain the desired regularity properties easily. That is very similar to the one dimensional normal approximation case. It is well known that the generalization of Stein's method from one to multi-dimensional approximations is highly nontrivial \cite{CM08,RR-09}.
 Our problem is a multidimensional diffusion approximation. 

\smallskip 

{\it (b) Error estimates of EM schemes for diffusions.}
Let us recall the results concerning the error estimates between the ergodic measures of SDEs and their EM scheme.
For the ease of stating and comparing the results in the literatures below, we denote in this subsection by  $(Y_{t})_{t \ge 0}$ and $(\tl{Y}_{n})_{n \in \N_{0}}$ the stochastic processes associated to SDEs and their EM scheme respectively, and by $\pi_{sde}$ and $\pi_{em}$ their ergodic measures respectively.

There have been many results concerning the error estimates between the ergodic measures of $(Y_{t})_{t \ge 0}$ and $(\tl{Y}_{n})_{n \in \N_{0}}$, see for instance \cite{BBC1,BDMS1, BCR1, DM2, DM1, KT1, LP1, P1, P2, T3}, but most of them are
asymptotic type.  For asymptotic results, we recall those in the literatures \cite{BCR1,GHL1, P1, P2} whose settings are close to ours. An empirical measure $\Pi^{n}_{em}$ of $\pi_{em}$ for a class of SDEs driven by multiplicative L\'evy noises is considered in  \cite{GHL1, P1, P2}, and it is shown  that $\frac1{\sqrt{\Gamma_n}}(\Pi^{n}_{em}(f)-\pi_{sde}(f))$ converges to a normal distribution as $n \rightarrow \infty$ for $f$ in a certain high order differentiable function family ($\Gamma_n$ has the same order as $n$). 
A similar type CLT is obtained in Budhiraja et al. \cite{BCR1} for a reflected SDE driven arising in queueing systems. All these works need strong dissipation and high order differentiability conditions on the drift of SDEs, which do not hold for the limiting diffusions of our queueing systems.

Among the few non-asymptotic results, the works in \cite{BDMS1,DM2,DM1}, arising from the sampling of Langevin dynamics, are probably most close to ours. Their SDEs are gradient systems, i.e., the drift is the gradient of a potential $U$, thus analytical tools such as concentration inequalities are available. 
Under certain conditions on the drift, they prove non-asymptotic bounds for the total variation or Wasserstein-2 distance between $\pi_{sde}$ and $\pi_{em}$. Their analysis heavily depends on the gradient form of the drift, and is not easily seen to be extended to a non-gradient system. Our SDE is not a gradient system in that its drift $g(x)$ can not be represented as a gradient of a potential, and what is worse is that $g(x)$ is even not differentiable.

 There are some works (see \cite{BBC1,T3} and references therein) giving non-asymptotic results for the difference between the law of $\tl{Y}_n$ and $\pi_{sde}$ for large $n$. Most of these works need strong dissipation and high order differentiability assumptions on the drift of the SDE, and their estimates are in the form
$|\E h(\tl{Y}_n)-\pi_{sde}(h)|$ or $|\frac{1}n \sum_{i=1}^n h(\tl{Y}_i)-\pi_{sde}(h)|$ for $h$ in a certain high order differentiable function family, from which one usually cannot derive a bound between the law of $\tl{Y}_n$ and $\pi_{sde}$ in a Wasserstein distance.

\smallskip 

{\it (c) CLT and MDP with respect to ergodic measures.}  Dieker and Gao \cite{DG1} proved that SDE (\ref{hSDEg}) is exponentially ergodic with ergodic measure $\mu$, it implies that Birkhoff  ergodic theorem holds true for the empirical measure $\mcl{E}_T$ of the process $X_t$, i.e. $\lim_{T \rightarrow \infty} \mcl{E}_T=\mu$ a.s., see for instance Prato and Zabczyk \cite{da1996ergodicity}.
It is natural to consider the CLT and MDP with respect to $\mcl{E}_T$.  
Using the method in \cite{WXX1,WLM1}, we will establish the CLT and MDP for $\mcl{E}_T$, in which the related variances can be determined by a Stein's equation. Because $\mcl{E}_T$ is a random measure, it is natural to choose $\mathcal B_b(\R^d,\R)$ rather than ${\rm Lip}(\R^d,\R)$ as the test functions family, which makes the regularity results in Gurvich \cite[Theorem 4.1]{Gur1,Gur11C} not applicable. Alternatively, we apply Malliavin calculus to study the regularity of the Stein's equation. 
We also prove the CLT and MDP for the EM scheme in which the variance is determined by another Stein's equation.  There exist very few results for studying CLT and MDP of the EM scheme, see \cite{fukasawa2020efficient,lu2020central}. 

\subsection{Organization of the paper}

In the remainder of this section, we introduce notations which will be frequently used. Section \ref{sec-proof strategy} gives the proof of Theorem \ref{thm:DDE}, while Sections \ref{s:CLTMDP} and \ref{sec:EMCLTMDP} provide the proofs for the CLTs and MDPs with respect to the process $(X_t)_{t \ge 0}$ and the EM scheme $(\tl X^{\eta}_k)_{k \in \N_0}$ respectively. We prove in Appendix \ref{App:GeneralErgodicEM} the ergodicity of the EM scheme, and prove in Appendix \ref{sec:AAS} the propositions and Lemmas in Section \ref{sec:EMCLTMDP}.

\subsection{Notations}
Let $\R$ and $\mathbb{C}$ be real numbers and complex numbers respectively. The Euclidean metric is denoted by $| \cdot |$. For matrixes $A=(A_{ij})_{d\times d}$ and $B=(B_{ij})_{d\times d}$, denote $\langle A, B \rangle_{ {\rm HS} }=\sum_{i,j=1}^d A_{ij} B_{ij}$ and Hilbert Schmidt norm is $ \| A \|_{  {\rm HS} } = \sqrt{ \sum_{i,j=1}^d A_{ij}^2   } $ and operator norm is $\| A \|_{ {\rm op} } = \sup_{  |u|=1 } |Au|$.
We write a symmetric matrix $A>0\, (A<0)$ if $A$ is a positive (negative) definite matrix, and write $A\ge 0 \, (A \leq0)$ if $A$ is a positive (negative) semi-definite matrix. $\langle x, y \rangle$ means the inner product, that is, $\langle x, y \rangle = x^{\prime} y$ for $x,y\in \R^d$. $\otimes$ is the outer product, that is, for vector $u=(u_1,\cdots,u_d)$ and matrix  $A=(A_{ij})_{d\times d}$, then $(u \otimes A)_{ijk}=u_{i}A_{jk}$ for $1 \le i, j, k \le d$.

$\mathcal{C}^k(\R^d, \R_+)$ means $\R_+$-valued $k$-times continuous derivatives functions defined on $\R^d$ with $k\in \N$ and $\R_+=[0,\infty)$. $ \mathcal{C}_b(\R^d, \R)$ is $\R$-valued continuous bounded functions defined on $\R^d$. 
Denote $\| f \|_{\infty}= {\rm ess}\sup_{x\in \R^d} |f(x)|$ for $f\in\mathcal{B}_b(\R^d, \R )$. For $f\in \mathcal{C}^2(\R^d, \R)$, denote $\nabla f=(\partial_1 f, \partial_2 f, \cdots, \partial_d f) \in \R^d$ and $\nabla^2 f= (   \partial_{ij} f)_{1\leq i,j \leq d} \in \R^{d\times d}$ the gradient and Hessian matrix for function $f$. For $f\in \mathcal{C}^1(\R^d, \R)$ and $u, x\in \R^d$, the directional derivative $\nabla_{u} f (x)$ is defined by
\begin{eqnarray*}
\nabla_{u} f(x) &=& \lim_{\e_1 \to 0} \frac{f(x+\e_1 u) - f(x)}{\e_1}.
\end{eqnarray*}
We know $\nabla f (x) \in \R^d$ for each $x \in \R^{d}$ and $\nabla_{u} f (x) =\langle \nabla f(x), u \rangle$. For $f \in \mathcal{C}^2(\R, \R)$, $\dot{f}$ and $\ddot{f}$ are the first and second derivatives of function $f$, respectively. For any probability measure $\nu$, denote $\nu(f) = \int f(x) \nu(\dif x)$.

$B(y,r)$ means the ball in $\R^d$ with centre $y \in \R^d$ and radius $r>0$, that is, $B(y,r)=\{z\in \R^d:|z-y|\leq r\}$.

$\mathcal{N}(a,A)$ with $a\in \R^d$ and $A\in \R^{d\times d}$ denotes Gaussian distribution with mean $a$ and covariance matrix $A$.

A sequence of random variables $\{Y_n, n\geq 1 \}$ is said to converge weakly or converge in distribution to a limit $Y_{\infty}$, that is, $Y_n \Rightarrow Y_{\infty}$ if $\lim_{n \rightarrow \infty} \E f(Y_{n})=\E f(Y_{\infty})$ for all bounded continuous function  $f$. In addition,  $Y_n \stackrel{p}{\longrightarrow} Y_{\infty}$ means convergence in probability, namely, $\lim_{n\to \infty} \PP(|Y_n - Y_{\infty}|>\delta)=0$ for all $\delta \geq 0$. $Y_n \stackrel{L^p}{\longrightarrow} Y_{\infty}$ means the $L^p$ convergence, that is, $\lim_{n\to \infty}\E |Y_n-Y_{\infty}|^p=0$.

Denote $X^x_t$ the process $X_t$ given $X_0=x$.  Denote by $P_{t}(x,\cdot)$ the transition probability of $X_{t}$ given $X_{0}=x$. Then the associated Markov semigroup $(P_t)_{t\ge 0}$ is given by,  for all $x\in\R^d$ and $f\in\mathcal{B}_b(\R^d,\R )$
\begin{eqnarray*}
P_tf(x)
\ = \ \E f(X^x_t)
\  = \ \int_{\R^{d}} f(y) P_{t}(x, \dif y), \quad \forall t\ge0.
\end{eqnarray*}
The generator $\mathcal{A}$ of $(X_t)_{t\geq 0}$ is given by, for $y\in\R^d$,
\begin{eqnarray}\label{e:A}
\mathcal{A} f(y) \ = \ \langle \nabla f(y), g(y)\rangle+\frac{1}{2} \langle \sigma\sigma^{\prime}, \nabla^2 f(y) \rangle_{{\rm HS}}, \quad f \in \mathcal{D}(\mathcal{A}),
\end{eqnarray}
where $\mathcal{D}(\mathcal{A})$ is the domain of $\mathcal{A}$, whose exact form is determined by the function space where the semigroup $(P_{t})_{t \ge 0}$ is located.

Denote by $\tilde{\mathcal{P}}_{\eta}(x,\cdot)$ the one step transition probability for the Markov chain $(\tl{X}_k^{\eta})_{k\in \mathbb{N}_0}$ with $\tl{X}_0^{\eta}=x$, that is, for $f\in \mathcal{B}_b(\R^d,\R )$ and $x\in \R^d$, one has
\begin{eqnarray*}
\tilde{\mathcal{P}}_{\eta}f(x) \ = \  \int_{\R^d} f(y)\tl{\mathcal{P}}_{\eta}(x,\dif y),
\end{eqnarray*}
and denote $\tl{\mathcal{P}}_{\eta}^k=\tl{\mathcal{P}}_{\eta} \circ  \tl{\mathcal{P}}_{\eta}^{k-1}$ for integers $k\geq 2$.

We denote by $\E^{\mu}$ the conditional expectation given that $X_{0}$ has a distribution $\mu$. If $\mu=\delta_{x}$, we write $\E^{x}=\E^{\delta_{x}}$. $\E_{\mathbb{P}}$ and $\E_{\mathbb{Q}}$ mean expectations under probability spaces $\mathbb{P}$ and $\mathbb{Q}$, respectively.

Recall that the following measure distances. The Wasserstein-1 distance between two probability measures $\mu_1$ and $\mu_2$ is defined as  (see Hairer and Mattingly \cite[p. 2056]{HM1})
\begin{eqnarray}\label{e:dW}
d_{W}(\mu_1,\mu_2)
&=& \sup_{h \in {\rm Lip(1)}}\left\{\int h(x) \mu_1 (\dif x) - \int h(x) \mu_2 (\dif x) \right \} \nonumber   \\
&=& \sup_{h \in {\rm Lip_0(1)}} \left \{\int h(x) \mu_1 (\dif x) - \int h(x) \mu_2 (\dif x) \right \} \nonumber \\
&=& \sup_{h \in {\rm Lip(1)}} \left \{\int h(x) \mu_1 (\dif x) - \int h(x) \mu_2 (\dif x), \ \ |h(x)| \le |x| \right \},
\end{eqnarray}
where ${\rm Lip(1)}$ is the set of Lipschitz function with Lipschitz constant $1$, that is,  ${\rm Lip(1)}=\{ h: |h(x)-h(y)|\leq |x-y|$ for all $x,y \in \R^d \}$, and ${\rm Lip_0(1)}:=\{h \in {\rm Lip(1)}: h(0)=0\}$.

The total variation distance (see Hairer \cite[p. 57]{MH2}) between two measures $\mu_1$, $\mu_2$ is defined by
\begin{eqnarray*}
\|\mu_1-\mu_2\|_{\rm{TV}}
&=& \sup_{ \substack{h \in \mathcal{B}_b(\R^d, \R), \,  \| h \|_{\infty}\leq1}  }  \left \{ \int_{\R^d} h(x) \mu_1(\dif x) - \int_{\R^d} h(x) \mu_2(\dif x) \right \} .
\end{eqnarray*}
Let $V:\R^d \rightarrow \R_+$ be a measurable function, define a weighted supremum norm on measurable functions (see Hairer \cite[p. 57]{MH2}) by
\begin{eqnarray*}
\| \varphi \|_{V} &=& \sup_{x\in \R^d } \frac{ | \varphi(x) | }{ 1+ V(x) },
\end{eqnarray*}
as well as the dual norm of measures by
\begin{eqnarray*}
\|\mu_1-\mu_2 \|_{\rm{TV}, \rm{V}}
&=& \sup \left\{  \int \varphi(x) \mu_1(\dif x) -\int \varphi(x) \mu_2(\dif x) :  \| \varphi \|_V \leq 1\right\}.
\end{eqnarray*}

An alternative expression for the weighted total variation norm is given by
\begin{eqnarray}\label{normTVV}
\|\mu_1-\mu_2\|_{\rm{TV}, \rm{V}}
&=&  \int_{\R^d} (1+V(x)) |\mu_1-\mu_2 | (\dif x),
\end{eqnarray}
where $\mu_1-\mu_2$ is a signed measure and $|\mu_1-\mu_2|$ is the absolute value of $\mu_1-\mu_2$. Under $V\ge 0$, one has the relation $\|\mu_1-\mu_2\|_{\rm{TV}} \leq \|\mu_1-\mu_2\|_{\rm{TV}, \rm{V}} $. If $1+V(x) \ge 1+c |x|^2 \ge c'|x|$ for some constants $c, c'>0$, it follows from \eqref{e:dW} and \eqref{normTVV} that there exists some constant $C>0$ such that
\Be \label{e:dWandTV}
d_W(\mu_1,\mu_2) \ \leq \ C  \|\mu_1-\mu_2\|_{\rm{TV}, \rm{V}}.
\Ee

Let $P_t^*$ be the dual operator of $P_t$ for all $t\geq 0$, that is, for some measurable set $A$ and measure $\mu_1$, one has
\begin{eqnarray*}
(P_t^* \mu_1 )(A) \ = \ \int_{\R^d} P_t(x,A) \mu_1(\dif x).
\end{eqnarray*}

We use the letter $C$ to represent a positive constant, which may be different from line to line. Denote
\begin{eqnarray}
C_{\rm op} \ & = & \  \|R\|_{\rm op}+\|(R-\alpha I)p {\rm e}'\|_{\rm op}, \label{Cop}  \\
\tl{C}_{\rm op} \ & = & \ C_{\rm op}+\| \sigma\sigma^{\prime} \|_{\rm HS}+1+\|R-\alpha I\|_{\rm op}+|\beta|, \label{tlCop}  \\
C_m \ & = & \ 2m^2\tl{C}_{{\rm op}} {\rm \ for \ integers \ } m\geq 2. \label{Cm}
\end{eqnarray}

\section{Proof of Theorem \ref{thm:DDE} } \label{sec-proof strategy}

We give the proof of Theorem \ref{thm:DDE} by the help of Proposition \ref{p:GeneralErgodicEM} below and the Stein's method developed in Fang et al. \cite{FSX1}. Proposition \ref{p:GeneralErgodicEM} is on the existence of a unique invariant measure and the exponential ergodicity of the Markov chain $(\tl{X}_{k}^{\eta})_{k\in \mathbb{N}_0}$ in \eqref{e:XD}  in the EM scheme, while we use the Stein's method via solving a Stein's equation. Note that even if an SDE is ergodic, its EM scheme may blow up (see \cite{FG1,MSH1}), so a careful study of these ergodicity properties is necessary. Its proof will be given in Appendix \ref{App:GeneralErgodicEM}.

\begin{proposition} \label{p:GeneralErgodicEM}
$(\tl{X}_{k}^{\eta})_{k\in \mathbb{N}_0}$ in \eqref{e:XD} admits a unique invariant measure $\tl \mu_{\eta}$ and is exponentially ergodic. More precisely, for any $k\in \mathbb{N}_0$ and measure $\nu$ satisfying $\nu(|\cdot|^2)<\infty$, one has
\begin{eqnarray*}
d_W( (\tl{\mathcal{P}}_{\eta}^k)^* \nu, \tl{\mu}_{\eta})
\ &\leq& \ C\eta^{-1} e^{-c k\eta} , \\
\| (\tl{\mathcal{P}}_{\eta}^k)^* \nu- \tl{\mu}_{\eta} \|_{\rm TV}
\ &\leq& \ C\eta^{-1} e^{-c k\eta},
\end{eqnarray*}
where $C, c$ are positive constants independent of $k$ and $\eta$. Moreover, for integers $\ell \geq 2$, there exists some positive constant $C$ depending on $\ell$ but not on $\eta$ such that
\begin{eqnarray*}
\tl \mu_{\eta}(|\cdot|^{\ell}) \ \ \leq \ \ C, \ \ \ \ \ell \ge 2.
\end{eqnarray*}
\end{proposition}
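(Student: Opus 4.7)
The plan is to combine a Foster--Lyapunov drift with a uniform small-set minorization and invoke a Harris-type geometric ergodicity theorem in the spirit of Hairer--Mattingly \cite{HM1, MH2}; the Wasserstein bound then follows from \eqref{e:dWandTV}, and the moment estimate drops out of the Lyapunov inequality evaluated against the invariant measure.

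First I would establish the drift. Setting $V(x)=|x|^2$ and using $\tl{X}^{\eta,x}_1 = x + \eta g(x) + \sqrt{\eta}\,\sigma\xi_1$ with $\E\xi_1 = 0$, a direct computation gives
\begin{eqnarray*}
\E V(\tl X^{\eta,x}_1) \ = \ |x|^2 + 2\eta\langle x, g(x)\rangle + \eta^2|g(x)|^2 + \eta\|\sigma\|_{\rm HS}^2.
\end{eqnarray*}
The piecewise-linear drift satisfies $|g(x)| \le M(1+|x|)$ and the dissipativity $\langle x, g(x)\rangle \le -\lambda|x|^2 + K$ for some $\lambda, K, M > 0$; this is the same structure underlying the ergodicity of the continuous-time diffusion in Dieker--Gao \cite{DG1}, since the positive-part term $(R-\alpha I)p({\rm e}'x)^+$ is dominated by the strict negativity of $\langle x, -Rx\rangle$ for large $|x|$. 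Absorbing $\eta^2|g(x)|^2$ into the quadratic term for small enough $\eta$ gives
$$\E V(\tl X^{\eta,x}_1) \ \le \ (1-c\eta) V(x) + C\eta.$$
Running the same argument with $V_\ell(x)=|x|^\ell$, combined with Gaussian moment bounds for $\xi_1$, produces the analogous drift $\E V_\ell(\tl X^{\eta,x}_1) \le (1-c_\ell\eta) V_\ell(x) + C_\ell \eta$ for every integer $\ell\ge 2$. Invariance then immediately yields the uniform moment bound $\tl\mu_\eta(|\cdot|^\ell) \le C_\ell/c_\ell$.

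Second I would verify a uniform small-set minorization. By the standing ellipticity $\xi'\sigma\sigma'\xi \ge c|\xi|^2$, the one-step transition kernel has the Gaussian density
$$q_\eta(x,y) \ = \ (2\pi\eta)^{-d/2}(\det\sigma\sigma')^{-1/2}\exp\Bigl(-\tfrac{1}{2\eta}\bigl(y-x-\eta g(x)\bigr)'(\sigma\sigma')^{-1}\bigl(y-x-\eta g(x)\bigr)\Bigr),$$
which is strictly positive and bounded below on any bounded product set. On a sublevel set $\{V\le R\}$ with $R$ chosen so that $(1-c\eta)R + C\eta < R$, this yields the minorization hypothesis needed in Harris's theorem.

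Finally I would apply the Hairer--Mattingly theorem to conclude existence and uniqueness of $\tl\mu_\eta$ together with a weighted total variation bound $\|(\tl{\mathcal{P}}_\eta^k)^* \nu - \tl\mu_\eta\|_{{\rm TV},V} \le C_\eta e^{-c_\eta k}(1+\nu(V))$. A careful accounting of the $\eta$-dependence---the per-step contraction in the drift has rate $c\eta$ so that $c_\eta = c\eta$ produces the $e^{-ck\eta}$ factor, while the $\eta^{-1}$ prefactor arises because the single-step minorization degrades as $\eta\downarrow 0$ (the effective noise scale is $\sqrt{\eta}$) and one needs order $1/\eta$ steps to build up an $\eta$-independent small-set contraction---then produces the stated form. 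Combining with $\|\cdot\|_{\rm TV} \le \|\cdot\|_{{\rm TV},V}$ and \eqref{e:dWandTV} yields both distance bounds. The main obstacle is precisely this $\eta$-bookkeeping in Harris's theorem together with the verification that the jump term in $g$ does not destroy the quadratic dissipation; both reduce to structural properties already exploited for the limiting diffusion in \cite{DG1}.
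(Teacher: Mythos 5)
Your overall architecture (drift plus minorization, then a Harris-type theorem from Douc--Fort--Moulines--Soulier or Hairer--Mattingly, then the moment bound from evaluating the drift against $\tl\mu_\eta$) is the same architecture the paper uses. But the first step of your proposal contains a genuine error that would make the whole argument collapse.

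You choose $V(x)=|x|^2$ and assert the Euclidean dissipativity $\langle x, g(x)\rangle \le -\lambda|x|^2+K$, attributing it to the structure exploited in Dieker--Gao. That inequality is in general false for this drift, and Dieker--Gao do not claim it. The matrix $R=(I-P')\,\mathrm{diag}(v)$ is merely Hurwitz, not Euclidean-dissipative: $R+R'$ need not be positive definite. Already for $d=2$, $P=\bigl(\begin{smallmatrix}0&1\\0&0\end{smallmatrix}\bigr)$ one finds $R+R'=\bigl(\begin{smallmatrix}2v_1&-v_1\\-v_1&2v_2\end{smallmatrix}\bigr)$, which is indefinite whenever $v_1>4v_2$; hence $\langle x,-Rx\rangle$ is positive on a cone, and the term $(R-\alpha I)p({\rm e}'x)^+$ only adds another piecewise-linear contribution that does not rescue the inequality. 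This is precisely why Dieker--Gao construct the common quadratic Lyapunov function
$V(y)=({\rm e}'y)^2+\kappa[y-p\phi({\rm e}'y)]'\tilde Q[y-p\phi({\rm e}'y)]+\hat c_2$
with $\tilde Q$ chosen to satisfy $\tilde Q(-R)+(-R)'\tilde Q<0$ and a second inequality for the ${\rm e}'x>0$ branch; the whole point is that no such inequality holds for the identity matrix in place of $\tilde Q$. Note also that even though $V$ and $|x|^2$ are two-sided comparable (equation \eqref{e:BV}), a one-step contraction of $\E V$ does \emph{not} transfer to a one-step contraction of $\E|x|^2$: a sandwich $c_1|x|^2\le V(x)\le c_2|x|^2+c_3$ with $c_2/c_1>1$ destroys the contraction when you try to pass it through, so you really must work with the Dieker--Gao $V$ itself.

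There is a secondary gap worth noting. The paper does not verify the discrete drift $\tl{\mathcal P}_\eta V^\ell(x)\le \check\gamma_\ell V^\ell(x)+\check K_\ell$ by a direct one-step computation on the EM chain. Instead (Lemmas \ref{lem:Xgesm} and \ref{lem:PXeD}) it compares $\tl X_1^{\eta,x}$ to the exact flow $X^x_\eta$ over one time step, bounds $\E|X^x_\eta-\tl X_1^{\eta,x}|^{2\ell}\lesssim(1+V^\ell(x))\eta^{3\ell}$, and transfers the continuous-time drift from Lemma \ref{lem:AV2}. If you replace your $|x|^2$ with the correct $V$, the direct computation you sketch becomes delicate because $\nabla V$ involves $\phi({\rm e}'y)$ and is only piecewise smooth; the comparison-to-continuous-time route sidesteps this. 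The minorization and $\eta$-bookkeeping in your last two steps match the paper's petite-set argument and the application of Tuominen--Tweedie/Douc et al., so those parts are sound once the Lyapunov function is fixed.
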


\subsection{The first Stein's equation} In order to apply the Stein's method developed in Fang et al. \cite{FSX1}, we consider our first Stein's equation as follows: for a Lipschitz function $h: \R^d \to \R $ with $\| \nabla h \|_{\infty}<\infty$,
\begin{eqnarray}\label{e:PoiLip}
\mathcal{A} f(x) \ = \ h(x) - \mu(h),
\end{eqnarray}
where $\mathcal{A}$ is defined as in \eqref{e:A}, and $\mu$ is the invariant measure for the process $(X_t)_{t\geq 0}$ in \eqref{hSDEg} with semigroup $(P_{t})_{t\geq 0}$.  Without loss of generality, we assume that $h\in {\rm Lip}_0(1)$. Then we can get the regularity for the solution to Stein's equation \eqref{e:PoiLip} from {Gurvich \cite[Theorem 4.1]{Gur1,Gur11C} (see also  Braverman and Dai \cite[Lemma 1]{BD1}).} 
The regularity results of the equation \eqref{e:PoiLip} in the following lemma will be used in the proof of Theorem \ref{thm:DDE}.
\begin{lemma}\label{lem:Lipregf}
Let $h\in {\rm Lip}_{0}(1)$ and $f$ be the solution to \eqref{e:PoiLip}. 
There exists some positive constant $C$ such that for $1\leq i,j\leq d$, 
\begin{eqnarray}
|f(x)| \ &\leq& \ C(1+|x|), \nonumber  \\
|\partial_i f(x)| \ &\leq& \ C(1+|x|^2), \nonumber  \\
|\partial_{ij} f(x)| \ &\leq& \ C(1+|x|^3), \label{e:2f}
\end{eqnarray}
where $\nabla f=(\partial_1 f, \partial_2 f, \cdots, \partial_d f) \in \R^d$ and $\nabla^2 f= (   \partial_{ij} f )_{1\leq i,j \leq d} \in \R^{d\times d}$ are the gradient and Hessian matrix for $f$,  respectively.
For any small enough $\varsigma\in (0,1)$, there exists some positive constant $C_\varsigma$ depending on $\varsigma$ such that for $1\leq i,j\leq d$,
\begin{equation}
\sup_{y\in \R^d: |y-x|<1} \frac{ | \partial_{ij} f(y)-\partial_{ij} f(x)| }{|y-x|^{1-\varsigma}}  \  \leq \ C_\varsigma (1+|x|^4).  \label{e:3f}
\end{equation}
\end{lemma}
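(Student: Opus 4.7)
The plan is to apply the regularity theory of Gurvich \cite[Theorem 4.1]{Gur1,Gur11C} to the Poisson-type representation
\begin{equation*}
f(x) \ = \ -\int_{0}^{\infty} \bigl( P_{t}h(x) - \mu(h) \bigr) \dif t,
\end{equation*}
which formally inverts $\mcl{A}$. First I would verify Gurvich's standing hypotheses in our setting: the drift $g$ is globally Lipschitz with constant $C_{{\rm op}}$ defined in \eqref{Cop}; the covariance $\sigma\sigma'$ is constant and uniformly elliptic by the standing assumption $\xi'\sigma\sigma'\xi \geq c|\xi|^{2}$; and Wasserstein exponential ergodicity with a quadratic Lyapunov function is given by Dieker--Gao \cite[Theorem 3]{DG1}. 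These ingredients, together with the standard moment bound $\E|X_{t}^{x}|\leq C(1+|x|)$, will guarantee that the integral above is absolutely convergent and defines the unique (up to an additive constant) classical solution of \eqref{e:PoiLip}.

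For the linear growth bound $|f(x)|\leq C(1+|x|)$ I would simply exploit the Wasserstein-1 duality: since $h\in {\rm Lip}_{0}(1)$,
\begin{equation*}
|P_{t}h(x)-\mu(h)| \ \leq \ d_{W}\bigl(\mcl{L}(X_{t}^{x}),\mu\bigr) \ \leq \ C(1+|x|)\eup^{-ct},
\end{equation*}
the last inequality being a direct consequence of the $d_W$-contraction of the semigroup $(P_t)_{t\ge 0}$ acting on Lipschitz observables. Integrating in $t$ immediately yields the linear bound on $|f|$.

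For the gradient bound, the Hessian bound, and the H\"older estimate \eqref{e:3f}, I would differentiate $P_{t}h$ in $x$ under the integral sign and invoke Gurvich's theorem, which supplies the polynomial-in-$|x|$ and exponentially-in-$t$ estimates
\begin{equation*}
|\nabla P_{t}h(x)| \leq C(1+|x|^{2})\eup^{-ct}, \qquad |\nabla^{2} P_{t}h(x)| \leq C(1+|x|^{3})\eup^{-ct},
\end{equation*}
together with an analogous bound of the form $C_{\varsigma}(1+|x|^{4})\eup^{-ct}$ on the $(1-\varsigma)$-H\"older seminorm of $\nabla^{2}P_{t}h$, valid for every $\varsigma\in(0,1)$. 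Integrating these estimates over $t\in(0,\infty)$ produces \eqref{e:2f} and \eqref{e:3f} with the stated polynomial prefactors. The main obstacle lies inside Gurvich's theorem itself: because $g$ fails to be differentiable along the hyperplane $\{{\rm e}'x=0\}$, the classical flow-differentiation approach for $\nabla^{2}P_{t}h$ breaks down, and one must rely on the PDE/probabilistic machinery of \cite{Gur1,Gur11C} that is specifically designed for piecewise-linear drifts; this non-smoothness is also the reason why the H\"older exponent must be taken strictly less than $1$, i.e. full $\mcl{C}^{2,1}$-regularity is not available.
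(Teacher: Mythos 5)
Your proposal is correct and matches the paper's approach: the paper obtains Lemma~\ref{lem:Lipregf} by directly citing Gurvich \cite[Theorem 4.1]{Gur1,Gur11C} (and Braverman--Dai \cite[Lemma 1]{BD1}), which is exactly the theorem you invoke for the gradient, Hessian, and H\"older bounds. The extra scaffolding you supply --- verifying Gurvich's hypotheses (Lipschitz drift with constant $C_{\rm op}$, uniform ellipticity of $\sigma\sigma'$, exponential ergodicity from Dieker--Gao) and deriving the linear bound on $|f|$ via Wasserstein duality --- is sound and helpful, but the paper itself states the lemma as a direct consequence of that citation without further argument.
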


\subsection{{Proof of Theorem \ref{thm:DDE} }}
\begin{proof}[Proof of Theorem \ref{thm:DDE}] (i) By Proposition \ref{p:GeneralErgodicEM}, we know $(\tl{X}_{k}^{\eta})_{k\in \mathbb{N}_0}$ in \eqref{e:XD} admits a unique invariant measure $\tl{\mu}_{\eta}$. Let the initial value $\tl{X}^{\eta}_{0}$ be the invariant measure $\tl \mu_{\eta}$.
We know $(\tl{X}^{\eta}_{k})_{k \in \mathbb{N}_0}$ is a stationary Markov chain. Denote $W=\tilde{X}_{0}^{\eta}$, $W'=\tilde{X}_{1}^{\eta}$ and $\delta = W'-W$. It is easy to see that
\begin{eqnarray}  \label{e:EDelW}
\mathbb{E}[\delta|W] \ = \ g(W) \eta \text{ \ \ and \ \ }
\mathbb{E}[\delta \delta^{\prime}|W] \  =\ g(W) g^{\prime}(W) \eta^2 +\sigma \sigma^{\prime}\eta.
\end{eqnarray}

Let $f$ be the solution to Eq. \eqref{e:PoiLip} with $h\in {\rm Lip}_0(1)$. Since $W$ and $W'$ have the same distribution, we have
\begin{eqnarray}\label{hMC51e}
0&=& \mathbb{E}f(W')-\mathbb{E}f(W) \nonumber \\
&=& \mathbb{E}[ \langle \delta, \nabla f(W) \rangle  ]
+\mathbb{E} \int_0^1 \int_0^1 r \langle \delta \delta^{\prime}, \nabla^2 f (W+\tl{r} r \delta ) \rangle_{\textrm{HS}} \dif \tl{r} \dif r \nonumber \\
&=&\mathbb{E}[  \langle g(W),\nabla f(W) \rangle ]\eta+\mathbb{E} \int_0^1 \int_0^1 r \langle \delta \delta^{\prime}, \nabla^2 f(W+\tl{r} r \delta ) \rangle_{\textrm{HS}} \dif \tl{r} \dif r, \nonumber
\end{eqnarray}
where the last inequality holds from below
\begin{eqnarray}\label{hMC52e}
\mathbb{E}[ \langle \delta, \nabla f(W) \rangle  ]
\ = \ \mathbb{E}[ \langle \mathbb{E}(\delta|W),\nabla f(W)\rangle  ]
\ = \ \mathbb{E}[  \langle g(W),\nabla f(W) \rangle ]\eta. \nonumber
\end{eqnarray}
In addition, one has
\begin{eqnarray*}
&& \mathbb{E} \int_0^1 \int_0^1 r \langle \delta \delta^{\prime}, \nabla^2 f(W+\tl{r} r \delta ) \rangle_{\textrm{HS}} \dif \tl{r} \dif r \nonumber \\
&=& \frac{1}{2}\mathbb{E} \langle \delta \delta^{\prime}, \nabla^2 f(W) \rangle_{\textrm{HS}}+
\mathbb{E} \int_0^1 \int_0^1 r \langle \delta \delta^{\prime}, \nabla^2 f(W+\tl{r} r \delta )-\nabla^2 f(W) \rangle_{\textrm{HS}} \dif \tl{r} \dif r \nonumber \\
&=& \frac{\eta}{2} \mathbb{E} [ \langle \sigma \sigma^{\prime}, \nabla^2 f(W) \rangle_{\textrm{HS}}  ] +\frac{\eta^2}{2} \mathbb{E} [  \langle g(W) g^{\prime}(W), \nabla^2 f(W) \rangle_{\textrm{HS}} ] \nonumber \\
&\ & \ \ \ \ \ \ \ \ \ \ \ \ \ \ \ \ \ \ \ +\mathbb{E} \int_0^1 \int_0^1 r \langle \delta \delta^{\prime}, \nabla^2 f (W+\tl{r} r \delta )-\nabla^2 f(W) \rangle_{\textrm{HS}} \dif \tl{r} \dif r,
\end{eqnarray*}
where the second equality is by the relation $\mathbb{E}[\langle \delta \delta^{\prime}, \nabla^2 f(W) \rangle_{\textrm{HS}}]=\mathbb{E}[\langle \E[\delta \delta^{\prime}|W], \nabla^2 f(W) \rangle_{\textrm{HS}}]$ and \eqref{e:EDelW}.
Collecting the previous relations, we obtain
\begin{eqnarray}\label{e:Afh}
\mathbb{E}[\mathcal{A}f(W) ] \ = \ \frac \eta 2 {\rm I}+\frac{1}{\eta} {\rm II},
\end{eqnarray}
where
\begin{align*}
{\rm I}  &\ = \ -\mathbb{E} [ \langle g(W) g^{\prime}(W), \nabla^2 f(W) \rangle_{\textrm{HS}} ], \\
{\rm II} &\ = \ -\mathbb{E} \int_0^1 \int_0^1 r \langle \delta \delta^{\prime}, \nabla^2 f(W+\tl{r} r \delta )-\nabla^2 f(W) \rangle_{\textrm{HS}} \dif\tl{r} \dif r.
\end{align*}

From the estimate $|g(x)| \leq \tl{C}_{\rm op}(1+|x|)$ for all $x\in \R^d$ with $\tl{C}_{{\rm op}}$ in \eqref{tlCop} and using  inequality \eqref{e:2f} in Lemma \ref{lem:Lipregf}, one has
\begin{eqnarray}\label{e:Afh-1}
|{\rm I}|
&=&\mathbb{E}|\langle g(W) g^{\prime}(W), \nabla^2 f(W) \rangle_{\textrm{HS}} |
\ \leq \ C(1+\E|W|^5)
\ \leq \ C,
\end{eqnarray}
where the last inequality is by Proposition \ref{p:GeneralErgodicEM}.

We claim that for any small enough $\varsigma \in (0,1)$, there exists some positive constant $C_\varsigma$, depending on $\varsigma$ but not on $\eta$ such that
\begin{eqnarray}\label{e:claim-1}
\mathbb{E} \left| \int_0^1 \int_0^1 r \langle \delta \delta^{\prime}, \nabla^2 f(W+\tl{r} r \delta )-\nabla^2 f(W) \rangle_{\textrm{HS}} 1_{\{|\delta|< 1\} } \dif\tl{r} \dif r \right|
\ &\leq& \ C_{\varsigma} \eta^{\frac{3-\varsigma}{2}},
\end{eqnarray}
and there exists some positive constant $C$, independent of $\eta$ such that
\begin{eqnarray}\label{e:claim-2}
\mathbb{E} \left| \int_0^1 \int_0^1 r \langle \delta \delta^{\prime}, \nabla^2 f(W+\tl{r} r \delta )-\nabla^2 f(W) \rangle_{\textrm{HS}} 1_{ \{ |\delta|\geq 1 \} } \dif\tl{r} \dif r \right|
\ &\leq&\  C\eta^{\frac{3}{2}}\,.
\end{eqnarray}

Combining \eqref{e:claim-1} and \eqref{e:claim-2}, we know for any small enough $\varsigma \in (0,1)$, there exists some positive constant $C_\varsigma$, depending on $\varsigma$ but not on $\eta$ such that
\begin{eqnarray}\label{e:Afh-2}
|{\rm II}| \ \leq \  C_{\varsigma} \eta^{\frac{3-\varsigma}{2}}.
\end{eqnarray}

Combining \eqref{e:Afh}, \eqref{e:Afh-1}, \eqref{e:Afh-2} and Eq. \eqref{e:PoiLip}, we know for any small enough $\varsigma \in (0,1)$, there exists some positive constant $C_\varsigma$, depending on $\varsigma$ but not on $\eta$ such that
\begin{eqnarray*}
d_{W}(\tl \mu_{\eta},\mu)
&=& \sup_{h\in {\rm Lip_0(1)}} |\E h(W)-\mu(h)|
\ = \ \sup_{h\in {\rm Lip_0(1)} } | \mathbb{E}\mathcal{A}f(W)|
\ \leq \  C_{\varsigma}\eta^{\frac{1-\varsigma}{2}}.
\end{eqnarray*}

It remains to show that inequalities \eqref{e:claim-1} and \eqref{e:claim-2} hold. It follows from \eqref{e:3f} in Lemma \ref{lem:Lipregf} and Proposition \ref{p:GeneralErgodicEM} that  for any small enough $\varsigma \in (0,1)$, there exists some positive constant $C_\varsigma$, depending on $\varsigma$ but not on $\eta$ such that
\begin{eqnarray*}
&& \mathbb{E} \left| \int_0^1\int_0^1 r \langle \delta \delta^{\prime}, \nabla^2 f(W+\tl{r} r \delta )-\nabla^2 f(W) \rangle_{\textrm{HS}} 1_{\{|\delta|< 1\} } \dif\tl{r} \dif r \right|  \\
&\leq& C_\varsigma \E[  |\delta|^{3-\varsigma} (1+|W|^4) 1_{\{|\delta|< 1\} } ] \\
&\leq& C_\varsigma \E[ |\eta g(W) + \sigma \eta^{\frac{1}{2}} \xi_1|^{3-\varsigma} (1+|W|^4) ]  \\
&\leq& C_\varsigma \eta^{\frac{3-\varsigma}{2}}  \E[1+|W|^7]  \\
&\leq& C_\varsigma \eta^{\frac{3-\varsigma}{2}},
\end{eqnarray*}
where the third inequality holds from small $\eta<1$. Thus, inequality \eqref{e:claim-1} holds.

It follows from \eqref{e:2f}  in Lemma \ref{lem:Lipregf} and Proposition \ref{p:GeneralErgodicEM} that for small $\eta<1$, there exists some positive constant $C$, independent of $\eta$ such that
\begin{eqnarray*}
&& \mathbb{E} \left| \int_0^1 \int_0^1 r \langle \delta \delta^{\prime}, \nabla^2 f(W+\tl{r} r \delta )-\nabla^2 f(W) \rangle_{\textrm{HS}} 1_{ \{ |\delta|\geq 1 \} } \dif\tl{r} \dif r \right|  \\
&\leq& C \mathbb{E} \int_0^1 \int_0^1 r |\delta|^2 |\nabla^2 f(W+\tl{r} r \delta )| 1_{ \{ |\delta|\geq 1 \} } \dif\tl{r} \dif r  \\
& & \quad + C \mathbb{E} \int_0^1 \int_0^1 r  |\delta|^2 |\nabla^2 f(W)| 1_{ \{ |\delta|\geq 1 \} } \dif\tl{r} \dif r  \\
&\leq& C\E[ |\delta|^2 (1+|W|^3+|\delta|^3) 1_{ \{ |\delta|\geq 1 \}  }  ]  \\
&\leq& C(\E[ |\delta|^4])^{\frac{1}{2}}  \PP^{\frac{1}{2}} ( |\delta|\geq 1 )
+C (\E[|W|^6])^{\frac{1}{2}} ( \E[ |\delta|^8] )^{\frac{1}{4}} \PP^{\frac{1}{4}}( |\delta|\geq 1 )
+C\E[ |\delta|^5  ]  \\
&\leq& C\eta^{\frac{3}{2}},
\end{eqnarray*}
where the last inequality holds from Chebyshev's inequality and Proposition \ref{p:GeneralErgodicEM}, that is,
\begin{eqnarray*}
\PP( |\delta|\geq 1)
\ \leq \ \E|\delta|^k
\ \leq \ C\eta^{\frac{k}{2}} \E[1+|W|^k]
\ \leq \ C \eta^{\frac{k}{2}}
\end{eqnarray*}
for any integers $k\geq 1$. Thus, inequality \eqref{e:claim-2} holds.

(ii) By triangle inequality and using Proposition \ref{p:GeneralErgodicEM}, for any small enough $\varsigma \in (0,1)$, there exists some positive constant $C_\varsigma$, depending on $\varsigma$ but not on $\eta$ such that
\begin{eqnarray*}
d_W( \mathcal{L}(\tl{X}_N^{\eta}),\mu)
&\leq& d_W( \mathcal{L}(\tl{X}_N^{\eta}),\tl{\mu}_{\eta})
+d_W(\tl{\mu}_{\eta},\mu)
\ \leq \  C\eta^{-1} e^{-c N\eta} + C_\varsigma \eta^{\frac{1-\varsigma}{2}}.
\end{eqnarray*}
Taking $\eta=\delta^{\frac{2}{1-\varsigma}}$ and $N:=O(\delta^{ \frac{2}{\varsigma-1} } |\log \delta|)$, one has $\eta^{-1} e^{-cN\eta} \leq \delta$, it implies that
\begin{eqnarray*}
d_W( \mathcal{L}(\tl{X}_N^{\eta}),\mu)
\ \leq \ \delta.
\end{eqnarray*}
The proof is complete.
\end{proof}

\begin{remark} \label{rem-prelimit-approx}
Recall the  steady-state approximation of the $M/Ph/n+M$ model was studied in Braverman and Dai \cite{BD1}. The process $X_t$ is the limiting diffusion of the diffusion-scaled process $\hat{X}^n(t) = \frac{X^n(t) - n\gamma}{\sqrt{n}}$. The process $\hat{X}^n(t)$ admits a unique ergodic measure $\hat \mu^n$ by  Gurvich \cite{Gur1}. The result in Braverman and Dai \cite{BD1}, implies that for any small enough $\varsigma \in (0,1)$, there exists some positive constant $C_\varsigma$, depending on $\varsigma$ but not on $n$ such that
\begin{equation}  \label{e:BrDa-0}
d_W(\hat \mu^n, \mu) \ \leq  \   C_\varsigma n^{\frac{\varsigma-1}{2}}.
\end{equation}
We apply the EM scheme developed in this paper to provide an approximation for the steady-state of $\hat{X}^n(t)$. 
By Theorem \ref{thm:DDE} (i) and \eqref{e:BrDa-0}, there exists some positive constant $C_\varsigma$, depending on $\varsigma$ but not on $n$ such that
\begin{eqnarray*}
d_W(\hat \mu^n, \tilde{\mu}_\eta) \ \leq \  C_\varsigma (n^{\frac{\varsigma-1}{2}} + \eta^{\frac{1-\varsigma}{2}})  \ \leq \   C_\varsigma n^{\frac{\varsigma-1}{2}},
\end{eqnarray*}
as $\eta$ is sufficiently small (say $\eta=n^{-1}$). Moreover, by Theorem \ref{thm:DDE} (ii), we can only run the EM scheme $N=O(n \log n)$ steps and obtain  that there exists some positive constant $C_\varsigma$, depending on $\varsigma$ but not on $n$ such that
\begin{eqnarray*}
d_W( \mathcal{L}(\tl{X}_N^{\eta}),\hat \mu^n) \ \leq \  C_\varsigma n^{\frac{\varsigma-1}{2}}.
\end{eqnarray*}
We expect a similar result holds for the general $GI/Ph/n+GI$ queues, if the result in \eqref{e:BrDa-0} can be established for non-Markovian queues. 
\end{remark}

\section{Proofs of Theorems \ref{thm:CLT} and \ref{thm:MDP}} \label{s:CLTMDP}
In order to determine the variance of CLT and MDP in Theorems \ref{thm:CLT} and \ref{thm:MDP}, we need to consider a new Stein's equation, whose regularity need to be studied by Malliavin calculus. All the proofs for the lemmas and propositions in this section are postponed to Appendix \ref{sec:AAS}.
\subsection{The second Stein's equation}
We consider the following Stein's equation: for $h\in \mathcal{B}_b(\R^d,\R)$,
\begin{eqnarray}\label{e:SE}
\mathcal{A} f(x) \ = \ h(x) - \mu(h),
\end{eqnarray}
where $\mathcal{A}$ is defined in \eqref{e:A}, and $\mu$ is the invariant measure for the process $(X_t)_{t\geq 0}$ in \eqref{hSDEg} with semigroup $(P_{t})_{t\geq 0}$.

The following proposition plays a crucial role in the proof Theorems \ref{thm:CLT} and \ref{thm:MDP}.
\begin{proposition}\label{lem:regf}
Let $h\in \mathcal{B}_b(\R^d, \R)$ and $f$ be the solution to the Stein's equation \eqref{e:SE}. 
There exists some positive constant $C$ such that
	\begin{eqnarray*}
|f(x)| \ &\leq& \ C\|h\|_\infty(1+|x|^2), \\
|\nabla f(x)| \ &\leq& \ C \|h\|_{\infty}(1+ |x|^2).
	\end{eqnarray*}
\end{proposition}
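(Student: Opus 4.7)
The plan is to express the solution as the standard Poisson integral
\begin{equation*}
f(x) \ = \ -\int_{0}^{\infty}\left[ P_{t} h(x) - \mu(h) \right] \dif t,
\end{equation*}
which formally satisfies $\mathcal{A} f = h-\mu(h)$, and to prove the two pointwise bounds via two separate mechanisms: exponential ergodicity for $|f|$, and the Bismut--Elworthy--Li (BEL) formula applied to a mollified diffusion for $|\nabla f|$. For the $|f|$-bound I would invoke the exponential ergodicity of $(X_{t})_{t\ge 0}$ from Dieker--Gao \cite{DG1} in the weighted total variation norm,
\begin{equation*}
\| P_{t}(x,\cdot)-\mu\|_{\mathrm{TV},V} \ \leq \ C(1+|x|^{2}) e^{-c t},
\end{equation*}
with Lyapunov function $V(x)=|x|^{2}$. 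Since $h$ is bounded, $\| h\|_{V}\leq \| h\|_{\infty}$, so
\begin{equation*}
|P_{t} h(x)-\mu(h)| \ \leq \ \| h\|_{\infty}\| P_{t}(x,\cdot)-\mu\|_{\mathrm{TV},V} \ \leq \ C\| h\|_{\infty}(1+|x|^{2}) e^{-ct},
\end{equation*}
so integrating in $t$ proves the first claim and simultaneously ensures that the Poisson integral converges absolutely.

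For $|\nabla f|$ I would like to use BEL to avoid placing any derivative on $h$, but a direct application fails because the drift $g$ is piecewise linear and hence not $C^{1}$. I would therefore introduce smooth mollifications $g_{\varepsilon}$ of $g$ with derivatives bounded uniformly in $\varepsilon$ (the a.e.\ derivative of $g$ is bounded by $C_{\rm op}$ via Rademacher), denote by $X_{t}^{\varepsilon,x}$ the solution of the mollified SDE driven by the same Brownian motion and noise coefficient $\sigma$, and write $P_{t}^{\varepsilon}$ for its semigroup. On the smooth system, BEL gives
\begin{equation*}
\nabla P_{t}^{\varepsilon} h(x) \ = \ \frac{1}{t}\, \E\left[ h(X_{t}^{\varepsilon,x}) \int_{0}^{t} \bigl(\sigma^{-1} J_{s}^{\varepsilon,x}\bigr)' \dif B_{s} \right],
\end{equation*}
where $J_{s}^{\varepsilon,x}$ is the Jacobian flow of $X_{s}^{\varepsilon,x}$. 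A Gronwall bound on $J^{\varepsilon}$, uniform in $\varepsilon$ because $\nabla g_{\varepsilon}$ is uniformly bounded, together with It\^o's isometry and the uniform ellipticity of $\sigma\sigma'$, yields $|\nabla P_{t}^{\varepsilon} h(x)|\leq C \| h\|_{\infty}/\sqrt{t}$ for $t\in (0,1]$. Passing to the limit $\varepsilon\to 0$ then transfers this bound to $P_{t}h$ itself.

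For $t\ge 1$ I would split using the semigroup identity: writing $\nabla P_{t} h(x)=\nabla P_{1}\bigl(P_{t-1} h - \mu(h)\bigr)(x)$ (since $\mu(h)$ is constant) and applying the short-time BEL estimate to the centred function $P_{t-1} h -\mu(h)$, which is bounded by $C\| h\|_{\infty}(1+|y|^{2}) e^{-c(t-1)}$ from the first step, produces a contribution of order $C\| h\|_{\infty}(1+|x|^{2}) e^{-c(t-1)}$. Combining,
\begin{equation*}
|\nabla f(x)| \ \leq \ \int_{0}^{1}\frac{C\| h\|_{\infty}}{\sqrt{t}}\,\dif t + \int_{1}^{\infty} C\| h\|_{\infty}(1+|x|^{2}) e^{-c(t-1)}\,\dif t \ \leq \ C\| h\|_{\infty}(1+|x|^{2}),
\end{equation*}
which is the second claim. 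The main obstacle I anticipate is making the $\varepsilon\to 0$ passage in the BEL identity rigorous: $\nabla g_{\varepsilon}$ can spike near the non-smooth set $\{y:\mathrm{e}' y=0\}$, so one needs a weighted occupation-time estimate---exactly the tool the introduction of the paper signals---showing that $(X_{s})_{s\le t}$ spends negligible time on this hyperplane, so that $J^{\varepsilon}\to J$ in $L^{2}$ and the expectation in BEL converges. Once this is handled, the polynomial weights $1+|x|^{2}$ in both bounds arise solely from the Lyapunov function $V(x)=|x|^{2}$.
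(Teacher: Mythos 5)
Your plan for the $|f|$ bound coincides exactly with the paper's: write $f(x)=-\int_0^\infty[P_th(x)-\mu(h)]\,\dif t$, bound the integrand by $C\|h\|_\infty(1+V(x))e^{-ct}$ using the weighted-TV ergodicity from Dieker--Gao, and integrate. For the $|\nabla f|$ bound, you and the paper both reach for the Bismut--Elworthy--Li formula through a mollified drift and the occupation-time lemma, but the route you take to control the time integral is genuinely different. You split $\int_0^\infty\nabla P_th(x)\,\dif t$ at $t=1$, use the crude $\|h\|_\infty/\sqrt{t}$ BEL estimate on $(0,1]$, and on $[1,\infty)$ write $\nabla P_th=\nabla P_1(P_{t-1}h-\mu(h))$ so that the ergodic decay $e^{-c(t-1)}$ from Step 1 kills the integral. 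The paper instead never differentiates the Poisson integral directly; it passes to the resolvent representation $f(x)=\int_0^\infty e^{-\lambda t}P_t[\lambda f - h + \mu(h)]\,\dif t$ (part (ii) of Lemma~\ref{prop:ST}), applies BEL to the integrand, and chooses $\lambda$ larger than $\frac{C_4}{2}+C_{\rm op}$ so that $e^{-\lambda t}$ dominates the exponential growth $e^{C_{\rm op}t}e^{\frac{C_4}{2}t}$ coming from the Jacobian and moment bounds. The paper's approach self-references $f$ (the $\lambda f$ term) and reuses the already-established quadratic bound on $|f|$; your approach is self-contained once the ergodicity estimate is in hand. Both are correct; the time-splitting argument is arguably the more standard device in the BEL literature, while the resolvent trick avoids ever needing to say anything about $\nabla P_t h$ for large $t$.

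One technical point you should tighten: your long-time step applies the BEL estimate at time $1$ to $\psi=P_{t-1}h-\mu(h)$, but the BEL identity as developed in the paper (Lemma~\ref{hLef2}) is established for $\psi\in\mathcal C^1$ with bounded gradient, and neither the differentiability nor any gradient bound for $P_{t-1}h$ is available a priori --- indeed, bounding $\nabla P_sh$ is precisely what you are in the middle of proving. The clean fix is to not apply BEL at time $1$ to $\psi$, but to keep the BEL representation at full time $t$, pick the Malliavin direction supported on $[0,1]$ (i.e.\ $v(r)=\sigma^{-1}J_r^xu$ for $r\le 1$ and $v\equiv 0$ afterwards), and condition on $\mathcal F_1$:
\begin{equation*}
\nabla_u P_t h(x)\ =\ \E\!\left[\bigl(P_{t-1}h(X_1^x)-\mu(h)\bigr)\int_0^1\langle\sigma^{-1}J_r^x u,\dif B_r\rangle\right],
\end{equation*}
after which Cauchy--Schwarz and the ergodic decay give exactly the $C\|h\|_\infty(1+|x|^2)e^{-c(t-1)}$ you want, without ever invoking regularity of $P_{t-1}h$. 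Also, note that your crude short-time estimate already picks up a factor $e^{C_{\rm op}t}$ from the Jacobian bound (harmless on $(0,1]$), and that passing from bounded $h$ to $h\in\mathcal B_b$ still requires the mollification $h_\delta$ (as in the paper) unless you argue that the BEL formula for the mollified diffusion extends directly to bounded measurable $h$ via smoothing. With these two adjustments your argument is complete and delivers the same conclusion as the paper's.
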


Let us sketch the strategy for the proof of Proposition \ref{lem:regf} as the following. Unlike Stein's equation \eqref{e:PoiLip} above, the function  $h$ on the right hand side of the equation \eqref{e:SE} belongs to $\mathcal B_b(\R^d,\R)$, whereby 
the regularity result in \cite[Theorem 4.1]{Gur1,Gur11C} cannot be applied to prove Proposition \ref{lem:regf}.  Alternatively, we first prove the following two representations for 
the solution $f$:
\begin{lemma}\label{prop:ST}
For any function $h\in \mathcal{B}_b(\R^d,\R)$, the following statements hold. 

(i) A solution to \eqref{e:SE} is given by
\begin{eqnarray}\label{e:SE1}
f(x) \ = \ - \int_0^{\infty} P_t[   h(x) - \mu(h) ] \dif t.
\end{eqnarray}

(ii) The solution to \eqref{e:SE} is also given by
\begin{eqnarray*} 
f(x) \ = \  \int_0^{\infty} e^{-\lambda t} P_t[  \lambda f(x) - h(x) + \mu(h) ] \dif t,  \quad  \forall  \lambda>0.
\end{eqnarray*}
\end{lemma}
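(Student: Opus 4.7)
The plan is to derive (i) directly from the exponential ergodicity of $(X_t)_{t\geq 0}$ established in Dieker and Gao \cite{DG1}, and then to obtain (ii) from (i) by a resolvent-type integration by parts. The two key analytic tools are the Kolmogorov backward identity $\partial_t P_t g = \mathcal{A} P_t g = P_t \mathcal{A} g$ for the diffusion \eqref{hSDEg}, and an exponential decay estimate of the form $|P_t h(x) - \mu(h)| \leq C\|h\|_\infty(1+|x|^k) e^{-ct}$ for some constants $C,c>0$ and some integer $k\geq 1$. The latter follows by combining the exponential ergodicity in \cite{DG1} with a polynomial Foster--Lyapunov function $V(x)=1+|x|^k$, whose drift inequality is easily verified from the linear-growth and mean-reversion structure of $g$.

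For part (i), I would first check that $f(x)=-\int_{0}^{\infty} P_{t}[h(x)-\mu(h)]\dif t$ is absolutely convergent and defines a measurable function of at most polynomial growth in $x$. Then, setting $g(x):=h(x)-\mu(h)$, the backward identity applied pointwise yields
\[
\int_{0}^{T} \mathcal{A} P_{t} g(x)\dif t \;=\; P_{T} g(x) - g(x).
\]
Sending $T\to\infty$, the right-hand side converges to $-g(x)$ by the exponential decay above, while on the left-hand side a Fubini argument (justified by the uniform exponential decay together with polynomial growth of the first and second derivatives of $P_t g$ in $x$) lets me commute $\mathcal{A}$ with $\int_{0}^{\infty}\dif t$, giving $-\mathcal{A} f(x) = -g(x)$, which is exactly the Stein equation \eqref{e:SE}.

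For part (ii), I would use (i) to rewrite the integrand $\lambda f - h + \mu(h) = \lambda f - \mathcal{A} f$, so the right-hand side becomes $\int_{0}^{\infty} e^{-\lambda t} P_{t}(\lambda f - \mathcal{A} f)(x)\dif t$. Integrating by parts on the first term against $\dif(-e^{-\lambda t})$ and using $\partial_{t} P_{t} f = P_{t}\mathcal{A} f$, I would establish
\[
\lambda \int_{0}^{\infty} e^{-\lambda t} P_{t} f(x)\dif t \;=\; f(x) + \int_{0}^{\infty} e^{-\lambda t} P_{t}\mathcal{A} f(x)\dif t,
\]
where the boundary term at $t=\infty$ vanishes because $P_{t}f$ grows only polynomially in $|x|$ while $e^{-\lambda t}$ decays exponentially. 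Substituting back and cancelling the two integrals of $e^{-\lambda t} P_t \mathcal{A} f$ leaves precisely $f(x)$, proving the representation.

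The main obstacle is the rigorous justification of the Fubini interchange in (i) and the integration by parts in (ii): both require uniform-in-$t$ polynomial moment estimates $\E|X_{t}^{x}|^{m}\leq C(1+|x|^{m})$ together with pointwise bounds on $\nabla P_{t} g$ and $\nabla^{2} P_{t} g$ that are polynomial in $|x|$ and exponentially decaying (or at least integrable) in $t$. The moment estimates follow from It\^o's formula applied to $|x|^{2m}$ and the dissipativity of $g$. The derivative bounds are more delicate because $g$ is only piecewise linear, hence not differentiable on the hyperplane $\{{\rm e}' x = 0\}$; I would handle this either by a mollification of $g$ followed by a limiting argument (a technique used later in the paper for the CLT/MDP) or by invoking the regularity results of Gurvich \cite{Gur1,Gur11C} cited in Lemma \ref{lem:Lipregf}.
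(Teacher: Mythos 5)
Your proof takes essentially the same route as the paper's: establish absolute convergence of $\int_0^\infty [P_th(x)-\mu(h)]\dif t$ from the exponential ergodicity bound in Lemma \ref{lem:AV2}, and then apply standard semigroup calculus (Kolmogorov backward for (i), resolvent integration by parts for (ii)), which is exactly what the paper delegates to Fang et al.\ \cite[Proposition 6.1]{FSX1}. Your (ii) argument in particular mirrors the resolvent identity that lies behind that citation.

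Two technical points deserve attention, though, because they are genuinely delicate in the $h\in\mathcal B_b(\R^d,\R)$ setting and neither is dispatched by the fallback you offer. First, you propose to justify the commutation of $\mathcal A$ with the $t$-integral (and the boundary-term control in (ii)) either by mollification or by ``invoking the regularity results of Gurvich cited in Lemma \ref{lem:Lipregf}''; the latter option does not apply, because Lemma \ref{lem:Lipregf} is stated for $h\in{\rm Lip}_0(1)$, and the paper says explicitly (immediately before Proposition \ref{lem:regf}) that the Gurvich regularity machinery is unavailable when $h$ is merely bounded measurable. The mollification-plus-Malliavin route, which you also mention, is the one the paper actually develops (Lemmas \ref{hLef2}, \ref{lem:EIm}, Proposition \ref{lem:regf}), and it is the only one of your two options that works here. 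Second, for $h$ bounded but not Lipschitz the Hessian $\nabla^2 P_t h$ blows up like $t^{-1}$ as $t\downarrow0$, so $\int_0^\infty\|\nabla^2 P_t h\|\dif t$ is not absolutely convergent, and the dominated-convergence/Fubini justification you invoke does not cover the second-order term of $\mathcal A$. A cleaner way to close part (i), avoiding any $\nabla^2 P_t h$ estimate, is to note that $f$ defined by \eqref{e:SE1} satisfies the semigroup identity $P_s f - f = \int_0^s P_t[h-\mu(h)]\dif t$ for every $s>0$ (a shift of variable plus the integrability you already established), from which $\mathcal A f = \lim_{s\downarrow0}s^{-1}(P_s f - f)=h-\mu(h)$ follows without any interchange of $\mathcal A$ and the improper integral; this is effectively what the cited Fang et al.\ argument does.
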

Thanks to Lemma \ref{prop:ST}, to bound $\nabla f$ we only need to bound $\nabla P_t[   \lambda f(x) - h(x)  + \mu(h)]$, whereby it is natural to see that the powerful Bismut-Elworthy-Li formula can be applied. Due to the non-differentiability of $g$, we first mollify the drift $g(x)$ by $\e$, then apply Malliavin calculus to the consequent mollified SDE \eqref{hSDEgApp} below, and finally let $\e \rightarrow 0$ to obtain the following Bismut-Elworthy-Li type formula:  
\begin{lemma}\label{hLef2}
Let $\psi \in \mathcal{C}^1(\mathbb{R}^d,\mathbb{R})$ be such that $\| \nabla   \psi \|_{\infty} < \infty$. For every $t>0$, $x, u \in \mathbb{R}^d$, we have
\begin{eqnarray*}
\nabla_{u} \mathbb{E} [\psi (X^{x}_t)]  \ = \  \mathbb{E}[\psi(X^{x}_t) \mathcal{I}_{u}^x(t)],
\end{eqnarray*}
where $\mathcal{I}_{u}^x(t)$ is defined as the second relation in \eqref{e:Iuxt} below. 
\end{lemma}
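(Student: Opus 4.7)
The plan is to combine a mollification of the piecewise-linear drift $g$ with the classical Bismut--Elworthy--Li (BEL) integration-by-parts formula, and then pass to the limit. Since $g$ is globally Lipschitz but fails to be differentiable across the hyperplane $H = \{y \in \R^d : \mathrm{e}'y = 0\}$, one cannot directly differentiate the stochastic flow $x \mapsto X^x_\cdot$; however, the non-degeneracy $\sigma \sigma' \geq c I$ and the additive form of the noise make the setting well-suited to a smoothing argument. The ``mollified diffusion and weighted occupation time'' mentioned in the abstract point to exactly this strategy.

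First I would convolve $g$ against a standard mollifier to obtain $g_\varepsilon := g \ast \rho_\varepsilon \in C^\infty$ with Lipschitz constant uniformly bounded in $\varepsilon$, and consider the auxiliary SDE
\begin{equation*}
\dif X^{\varepsilon,x}_t \ = \ g_\varepsilon(X^{\varepsilon,x}_t)\dif t + \sigma \dif B_t,\qquad X^{\varepsilon,x}_0 = x.
\end{equation*}
Since $g_\varepsilon$ is smooth, the Jacobian flow $J^{\varepsilon,x}_t = \nabla_x X^{\varepsilon,x}_t$ is well-defined and solves a linear matrix SDE with coefficient $\nabla g_\varepsilon(X^{\varepsilon,x}_s)$, and the standard BEL identity yields
\begin{equation*}
\nabla_u \E[\psi(X^{\varepsilon,x}_t)] \ = \ \E\!\left[\psi(X^{\varepsilon,x}_t)\, \mathcal{I}^{\varepsilon,x}_u(t)\right],\qquad \mathcal{I}^{\varepsilon,x}_u(t) := \tfrac{1}{t}\int_0^t \langle \sigma^{-1} J^{\varepsilon,x}_s u, \dif B_s\rangle,
\end{equation*}
for $\psi \in C^1$ with bounded gradient. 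Standard Gronwall estimates give $X^{\varepsilon,x}_t \to X^x_t$ in $L^p$ uniformly on compact time intervals, so the left-hand side converges to $\nabla_u \E[\psi(X^x_t)]$ by a finite-difference argument using $\|\nabla\psi\|_\infty < \infty$ together with the $L^1$ convergence of the difference quotients.

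The main obstacle is passing to the limit on the right-hand side, which requires controlling $J^{\varepsilon,x}_s$ as $\varepsilon \to 0$. The coefficient $\nabla g_\varepsilon$ converges pointwise to $\nabla g$ only off the singular set $H$, where $\nabla g$ does not exist. To close the argument, I would invoke a weighted occupation-time estimate for the mollified diffusion: uniform ellipticity of $\sigma \sigma'$ and a Krylov-type bound yield that the expected Lebesgue time spent by $X^{\varepsilon,x}$ in any $\delta$-tube around $H$ is $O(\delta)$ uniformly in $\varepsilon$. This allows one to identify a limit $J^x_s$ that solves the linear SDE with coefficient $\nabla g(X^x_s)\mathbf{1}_{\{X^x_s \notin H\}}$, and to upgrade the convergence to $J^{\varepsilon,x}_s \to J^x_s$ in $L^p$. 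Combined with It\^o's isometry for the stochastic integral defining $\mathcal{I}^{\varepsilon,x}_u(t)$ and uniform integrability of $\psi(X^{\varepsilon,x}_t)\mathcal{I}^{\varepsilon,x}_u(t)$, this yields the identity with $\mathcal{I}^x_u(t)$ the obvious limit stochastic integral, which should coincide with the expression in \eqref{e:Iuxt}. The delicate piece is the quantitative control of the occupation time near the singular hyperplane, and this is where I expect the real work to live.
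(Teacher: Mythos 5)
Your strategy --- mollify the drift, apply the classical Bismut--Elworthy--Li formula to the mollified diffusion, and pass to the limit $\varepsilon \to 0$ using an occupation-time estimate near the singular hyperplane --- is exactly the approach taken in the paper (Subsection~\ref{sub:MolDif}). The implementation differs only in small ways: the paper uses an explicit piecewise-polynomial mollifier rather than a convolution; the occupation-time bound (Proposition~\ref{lem:occupation}) is invoked only to conclude that the limiting process $X^x$ spends Lebesgue-null time on the exact hyperplane $\{\mathrm{e}'y = 0\}$, which suffices because the singular set can then be removed inside the matrix exponential defining the Jacobian (Lemma~\ref{l:XeCon}), rather than through a uniform-in-$\varepsilon$ quantitative $\delta$-tube bound on the mollified process; and the final passage to the limit is carried out via the closedness of the gradient operator rather than through difference quotients.
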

The aforementioned strategy for the proof of Lemma \ref{hLef2} is detailed in Section \ref{sub:MolDif} below. We will see that the weighted occupation time plays a crucial role.   

Combining Lemmas \ref{prop:ST} and \ref{hLef2}, we prove Proposition \ref{lem:regf} with the details addressed in Appendix \ref{sec:AAS}. All the proofs for the lemmas and propositions in this section will be given in Appendix \ref{sec:AAS}.

\subsection{A mollified diffusion}  \label{sub:MolDif}
 We shall use the Malliavin calculus to get the regularity in Lemma \ref{lem:regf} for $h\in \mathcal{B}_b(\R^d,\R)$. Since $g(x)$ is not differentiable, we consider an approximation of SDE \eqref{hSDEg} by mollifying the drift:
\begin{eqnarray}\label{hSDEgApp}
\dif X^{\e}_t  \ = \ g_\e (X^{\e}_t) \dif t +\sigma \dif B_t,
\end{eqnarray}
where
\begin{eqnarray*}
g_{\e} (x) \ = \ -\beta p -Rx+\rho_{\e}({\rm e}^{\prime}  x) (R-\alpha I)p,
\end{eqnarray*}
and $\rho_{\e}$ is defined as below: for $0<\e<1$,
$$\rho_{\e}(y) \ = \  \begin{cases}
0, & \quad y<-\e, \\
y, & \quad y>\e, \\
\frac{3\e}{16} - \frac{ 1 }{ 16 \e^3} y^4 + \frac{ 3 }{8 \e} y^2 + \frac{1}{2} y, & \quad |y| \le \e. \\
\end{cases}$$
It is easy to check that $\|\dot{\rho}_{\e}\|_{\infty} \leq 1$ and that
\begin{eqnarray}
g_{\e} \ & \in & \ \mathcal{C}^2(\mathbb{R}^d,\mathbb{R}^d),  \nonumber  \\
\nabla g_{\e}(x) \ & = & \ -R +\dot{\rho}_{\e} ({\rm e}^{\prime}  x) (R-\alpha I)p {\rm e}^{\prime} , \label{e:Nge} \\
\nabla^2 g_{\e}(x) \ & =& \ \ddot{\rho}_{\e}({\rm e}^{\prime}  x){\rm e}\otimes(R-\alpha I)  p{\rm e}^{\prime} . \nonumber
\end{eqnarray}
Moreover, we can see that $ \lim\limits_{\e \rightarrow 0}g_{\e}(x)=g(x)$ for all $x\in \R^d$ and
$$\lim_{\e \rightarrow 0}\nabla g_{\e}(x) \ = \ \begin{cases}-R +(R-\alpha I)p {\rm e}^{\prime} , & \quad {\rm e}^{\prime} x>0,
\\
-R, & \quad {\rm e}^{\prime}  x<0,  \\
-R + \frac{1}{2}(R-\alpha I)p{\rm e}^{\prime}  , & \quad {\rm e}^{\prime} x=0.
\end{cases}$$
\subsubsection{Moment estimates and Jacobi flows}
\begin{lemma}\label{lem:XXem}
For all $x \in \R^{d}$, $t\geq 0$ and intergers $m \geq 2$, we have
\begin{eqnarray}  \label{e:XXem}
\E |X^{\e,x}_t|^{m}, \E|X^{x}_{t}|^{m} \ &\leq& \ e^{C_m t}(|x|^{m}+1),
\end{eqnarray}
where $C_m$ is in \eqref{Cm}.
Moreover, we have as $\e \to 0$,
\begin{eqnarray}
\E|X^{\e,x}_{t}-X^{x}_{t}|^m \rightarrow 0,  \qquad  t \geq 0. \label{e:XeCon-1}
\end{eqnarray}
\end{lemma}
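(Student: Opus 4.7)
The strategy for both parts is to apply It\^o's formula combined with Gronwall's inequality, exploiting uniform-in-$\e$ properties of the mollified drift.

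For the moment estimate (i), I would apply It\^o's formula to the Lyapunov function $V(x)=(1+|x|^2)^{m/2}$ for both $(X^x_t)$ and $(X^{\e,x}_t)$. The key structural fact is that both drifts satisfy linear growth $|g(x)|,|g_{\e}(x)|\leq \tilde{C}_{{\rm op}}(1+|x|)$: for $g$ this uses $|y^{+}|\leq |y|$, while for $g_{\e}$ it follows from the explicit form of $g_\e$ together with the bound $|\rho_{\e}(y)|\leq |y|+C\e\leq |y|+C$ for $\e<1$, which one reads off directly from the piecewise definition of $\rho_\e$. Combining this with $|\langle x, g(x)\rangle|\leq \tfrac{3}{2}\tilde{C}_{{\rm op}}(1+|x|^2)$ (Cauchy--Schwarz plus $|x|(1+|x|)\leq \tfrac{3}{2}(1+|x|^2)$) and controlling the Hessian term through $\|\sigma\sigma'\|_{{\rm HS}}$ (which is absorbed into $\tilde{C}_{{\rm op}}$ by \eqref{tlCop}), one computes $\mathcal{A}V(x)\leq C_m V(x)$ with $C_m=2m^2\tilde{C}_{{\rm op}}$ as in \eqref{Cm}, and the analogous bound for the generator of $X^{\e}$. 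Gronwall then yields $\E V(X^{x}_t),\E V(X^{\e,x}_t)\leq V(x)e^{C_m t}$, and the elementary inequality $V(x)\leq 2^{m/2}(1+|x|^m)$ converts this to \eqref{e:XXem} after (if necessary) enlarging $C_m$ to absorb the factor $2^{m/2}$.

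For the convergence \eqref{e:XeCon-1}, the essential point is that \eqref{hSDEg} and \eqref{hSDEgApp} are driven by the same Brownian motion $B_t$, so $Y^{\e}_t:=X^{\e,x}_t-X^{x}_t$ has absolutely continuous paths with $\dif Y^{\e}_t = [g_{\e}(X^{\e,x}_t)-g(X^{x}_t)]\,\dif t$ and $Y^{\e}_0=0$. I would exploit two uniform-in-$\e$ facts: (a) $g_{\e}$ is globally Lipschitz with constant $C_{{\rm op}}$, since $\|\dot{\rho}_{\e}\|_{\infty}\leq 1$ and $\nabla g_{\e}$ has the explicit form \eqref{e:Nge}; and (b) $\sup_x|g_{\e}(x)-g(x)|\leq C\e$, because $\rho_{\e}-(\cdot)^{+}$ vanishes outside $[-\e,\e]$ and the quartic formula defining $\rho_{\e}$ gives an $O(\e)$ pointwise bound on that interval. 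Splitting
\begin{equation*}
g_{\e}(X^{\e,x}_t)-g(X^{x}_t) \ = \ [g_{\e}(X^{\e,x}_t)-g_{\e}(X^{x}_t)]+[g_{\e}(X^{x}_t)-g(X^{x}_t)],
\end{equation*}
differentiating $|Y^{\e}_t|^m$ pathwise (no It\^o correction since $Y^{\e}$ is of bounded variation), and applying Young's inequality to the cross term yields
\begin{equation*}
\frac{\dif}{\dif t}\E|Y^{\e}_t|^m \ \leq \ C\,\E|Y^{\e}_t|^m + C\e^m,
\end{equation*}
so Gronwall gives $\E|Y^{\e}_t|^m\leq C_t\,\e^m\to 0$ as $\e\to 0$.

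The main obstacle I anticipate is bookkeeping: verifying that the constant in the moment estimate (i) matches the explicit $C_m=2m^2\tilde{C}_{{\rm op}}$ declared in \eqref{Cm} requires carefully tracking the $m(m-2)$ factor from the $(1+|x|^2)^{m/2-2}xx'$ piece of $\nabla^2 V$ and balancing it against $\|\sigma\sigma'\|_{{\rm HS}}$ inside $\tilde{C}_{{\rm op}}$. A secondary technical point is the justification of It\^o's formula for $V(X_t)$ when $g$ is only Lipschitz (not $C^2$); this is standard since It\^o requires smoothness only of $V\in C^{\infty}$, not of the SDE coefficients, but it can alternatively be sidestepped by proving the bound first for $X^{\e,x}$ (where $g_{\e}\in C^2$) and then passing $\e\downarrow 0$ using \eqref{e:XeCon-1} together with Fatou's lemma.
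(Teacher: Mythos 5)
Your proof is correct and follows essentially the same It\^o-plus-Gronwall strategy as the paper. The only cosmetic differences are that the paper applies It\^o's formula directly to $|x|^m$ (which, after Young's inequality, cleanly produces the stated constant $C_m=2m^2\tl{C}_{\rm op}$ without the extra $2^{m/2}$ factor you would need to absorb) rather than to $(1+|x|^2)^{m/2}$, and that for \eqref{e:XeCon-1} the paper writes the difference $X^{\e,x}_t-X^x_t$ as a variation-of-constants integral via the mean value theorem and bounds it pathwise using $\|\nabla g_\e\|_{\rm op}\le C_{\rm op}$ and $|g_\e-g|\le C_{\rm op}\e$, rather than invoking Gronwall; both are equivalent, and your worry about applying It\^o's formula with a merely Lipschitz drift is, as you yourself note, unnecessary.
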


We consider the derivative of $X^{\e,x}_t$ with respect to initial value $x$, which is called the Jacobi flow. Let $u \in \mathbb{R}^d$ and the Jacobi flow $\nabla_{u} X^{\e,x}_{t}$ along the direction $u$ be defined as
\begin{eqnarray*}
\nabla_{u} X^{\e,x}_{t} &=& \lim_{\e_1 \rightarrow 0} \frac{ X^{\e,x+\e_1 u}_t - X^{\e,x}_t  }{\e_1}, \quad t\geq 0.
\end{eqnarray*}
The above limit exists and satisfies
\begin{eqnarray*}
\frac{\dif}{\dif t}\nabla_{u} X^{\e,x}_{t} &=&\nabla g_{\e} (X^{\e,x}_t) \nabla_{u} X^{\e,x}_{t}, \quad \nabla_u X_{0}^{\e,x}=u.
\end{eqnarray*}

Define
\begin{eqnarray*}
J_{s,t}^{\e,x}:&=&\exp \left( \int_s^t \nabla g_{\e} (X_{r}^{\e,x}) \dif r \right), \ \ \ \ \ \ 0 \le s \le t<\infty.
\end{eqnarray*}
It is called the Jacobian between $s$ and $t$. For notational simplicity, denote $J^{\e,x}_{t}=J^{\e,x}_{0,t}$. Then we have
\begin{eqnarray}\label{hJaF1}
\nabla_{u} X^{\e,x}_{t}   &=&  J_{t}^{\e,x} u .
\end{eqnarray}

Define
$$\nabla \widetilde{g}(x):\ = \ -R +1_{\{{\rm e}^{\prime} x>0\}} (R-\alpha I)p {\rm e}^{\prime}.$$
It is easy to see that $\lim_{\e \rightarrow 0} \nabla g_{\e}(x)=\nabla \widetilde{g}(x)$ for all ${\rm e}^{\prime} x \ne 0$.
Because $g(x)$ is not differentiable for ${\rm e}^{\prime}  x = 0$, it is necessary for us to define the above $\widetilde{g}(x)$ which takes the same value as $\nabla g(x)$ for ${\rm e}^{\prime} x \ne 0$ and has a definition on ${\rm e}^{\prime}  x=0$. Define
$$J^{x}_{s,t}:\ = \ \exp\left(\int_{s}^{t} \nabla \widetilde{g} (X^{x}_{r}) \dif r\right), \ \ \ \ \ \ x \in \R^{d}, \  \ 0 \le s \le t<\infty.$$
We also denote $J^{x}_{t}=J^{x}_{0,t}$.
Then we have the following lemma.
\begin{lemma}  \label{l:XeCon}
For any $x \in \R^{d}$, as $\e \rightarrow 0$, the following holds: 
\begin{eqnarray*}
 \|J_{s,t}^{\e,x}-J^{x}_{s,t}\|_{{\rm op}}\ {\longrightarrow} \ 0, \ \ \ \ \ \ \ & & 0 \le s \le t<\infty, \ \ \ \ \ { \rm a.s. }
\end{eqnarray*}
\end{lemma}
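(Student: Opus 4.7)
The plan is to pass to the limit inside the matrix exponential by establishing Lebesgue-almost-everywhere convergence of the integrand $\nabla g_\e(X_r^{\e,x}) \to \nabla \widetilde{g}(X_r^x)$ and then invoking dominated convergence together with continuity of $\exp(\cdot)$ on bounded subsets of $\R^{d\times d}$. A uniform majorant is immediate: since $\|\dot\rho_\e\|_\infty \le 1$, the expression in \eqref{e:Nge} gives $\|\nabla g_\e(y)\|_{\rm op} \le C$ with $C$ independent of $\e$ and $y$, so dominated convergence will apply as soon as pointwise convergence is secured.

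The first ingredient is the uniform pathwise bound $\sup_{r \in [0,T]} |X_r^{\e,x} - X_r^x| \to 0$ as $\e \to 0$. Since SDEs \eqref{hSDEg} and \eqref{hSDEgApp} share the same Brownian motion, the stochastic integrals cancel and the difference satisfies a deterministic integral equation on each sample path. Direct inspection of the mollifier $\rho_\e$ shows that $\rho_\e(y) = y^+$ for $|y| \ge \e$ and $|\rho_\e(y) - y^+| \le C\e$ on $|y| \le \e$, hence $\|g_\e - g\|_\infty \le C\e$; combined with the Lipschitz property of $g_\e$ (uniform in $\e$) and Gronwall's inequality, this gives $\sup_{r \in [0,T]} |X_r^{\e,x} - X_r^x| \le C_T\,\e$ on every Brownian path.

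The second and more delicate ingredient is that the zero set $N_x := \{r \ge 0 : \mathrm{e}' X_r^x = 0\}$ is Lebesgue-negligible almost surely. Under the uniform non-degeneracy $\xi' \sigma \sigma' \xi \ge c|\xi|^2$, the one-dimensional process $\mathrm{e}' X_r^x$ is an It\^o process with diffusion coefficient bounded below, so $\PP(\mathrm{e}' X_r^x = 0) = 0$ for each $r > 0$ by an occupation-density (or Malliavin) argument, and Fubini then yields $\E[\mathrm{Leb}(N_x \cap [0,T])] = 0$. For $r \notin N_x$, the value $\mathrm{e}' X_r^x$ has a definite sign, and the uniform convergence from the previous step forces $|\mathrm{e}' X_r^{\e,x}| > \e$ with the same sign as $\mathrm{e}' X_r^x$ for all sufficiently small $\e$; hence $\dot\rho_\e(\mathrm{e}' X_r^{\e,x})$ equals $\mathbf{1}_{\{\mathrm{e}' X_r^x > 0\}}$, and consequently $\nabla g_\e(X_r^{\e,x}) \to \nabla \widetilde{g}(X_r^x)$ for Lebesgue-almost every $r \in [s,t]$, almost surely. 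Bounded convergence and continuity of $\exp(\cdot)$ then give $\|J_{s,t}^{\e,x} - J_{s,t}^x\|_{\rm op} \to 0$ a.s.

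The main obstacle I expect is the measure-zero property of $N_x$: the discontinuity of $\nabla \widetilde{g}$ lives exactly on the hyperplane $\{\mathrm{e}' x = 0\}$, and the limit passage would collapse if the diffusion lingered there on a set of positive Lebesgue time. The hypothesis $\xi' \sigma \sigma' \xi \ge c|\xi|^2$ is precisely what guarantees transversal crossings and hence the Lebesgue-null property of $N_x$, providing the essential input that lets the rest of the argument go through routinely.
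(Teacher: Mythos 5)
Your proof is correct and follows essentially the same strategy as the paper: (i) a pathwise bound $\sup_{r\le T}|X_r^{\e,x}-X_r^x|\le C_T\e$ obtained from the uniform Lipschitz constant of $g_\e$, the estimate $\|g_\e-g\|_\infty\le C\e$, and Gronwall; (ii) Lebesgue-negligibility, almost surely, of the hyperplane-crossing time set $\{r:{\rm e}'X_r^x=0\}$; (iii) pointwise-in-$r$ convergence $\nabla g_\e(X_r^{\e,x})\to\nabla\widetilde g(X_r^x)$ off this null set, followed by dominated convergence and continuity of $\exp(\cdot)$. The only place you diverge is in how step (ii) is justified. The paper proves it with a tailor-made weighted occupation-time functional
$L_t^{\eps,x}=\int_0^t\bigl[1-({\rm e}'X_s^x)^2/\eps^2\bigr]1_{\{|{\rm e}'X_s^x|\le\eps\}}\,\dif s$,
applying It\^o's formula to a $\mathcal C^2$ mollification $\phi_\eps$ of $|y|$ with $\ddot\phi_\eps$ equal to that integrand and obtaining the quantitative bound $\E L_t^{\eps,x}\le C\eps\,e^{ct}(1+|x|)(1+t)$ (Proposition \ref{lem:occupation}); this is a self-contained Tanaka-type estimate using only It\^o's formula and moment bounds, and the weighted-occupation-time device is in fact highlighted in the abstract as one of the paper's technical points. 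You instead invoke the classical occupation-density (local-time) or marginal-density argument for the one-dimensional semimartingale ${\rm e}'X_r^x$, whose quadratic-variation density ${\rm e}'\sigma\sigma'{\rm e}$ is a strictly positive constant by the non-degeneracy hypothesis, and then conclude via Fubini. Both routes are valid and rest on the same non-degeneracy assumption; the paper's is more elementary and yields an explicit rate in $\eps$, while yours is shorter but outsources the key fact to a standard theorem not proved in the paper. The remaining steps in your write-up — the sign stability of ${\rm e}'X_r^{\e,x}$ once $\e$ is small enough that $|{\rm e}'X_r^{\e,x}|>\e$, the identification $\dot\rho_\e({\rm e}'X_r^{\e,x})=1_{\{{\rm e}'X_r^x>0\}}$, and the passage to the limit inside the matrix exponential — match the paper's proof.
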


Now we give estimates for  $\|J^{\e,x}_{s,t}\|_{ {\rm op} }$ and $\|J^{x}_{s,t}\|_{ {\rm op} }$. By \eqref{e:Nge}, we can easily see that
\begin{eqnarray}  \label{e:Ngeop}
\|\nabla g_{\e}(x)\|_{ {\rm op} } \ \le \ \|R\|_{ {\rm op} }+\|(R-\alpha I) p {\rm e}^{\prime} \|_{ {\rm op} } \ = \ C_{{\rm op}},
\end{eqnarray}
from which we obtain
\begin{eqnarray*}
	\|J^{\e,x}_{s,t}\|_{ {\rm op} }
	&  \le  & \exp\left(\int_s^{t} \|\nabla g_{\e}(X^{\e,x}_{r})\|_{ {\rm op} } \dif r\right) \ \le \ e^{C_{{\rm op}}(t-s)}.
\end{eqnarray*}
So for all $0 \le s \le t<\infty$, we have
\begin{eqnarray}\label{e:JJe}
\|J^{x}_{s,t}\|_{ {\rm op} },\|J^{\e,x}_{s,t}\|_{ {\rm op} } \  \le \ e^{C_{{\rm op}}(t-s)},
\end{eqnarray}
where the bound of $\|J^{x}_{s,t}\|_{ {\rm op} }$ comes from the same argument since the bound in \eqref{e:Ngeop} also holds for $\nabla \widetilde{g}(x)$. Observe that the above estimates immediately implies that for $u\in\R^d$,
\begin{eqnarray*}
|\nabla_u X_t^x|,\,|\nabla_u X_t^{\e,x}| \ \leq  \  e^{C_{{\rm op}}t}|u|.
\end{eqnarray*}

\subsubsection{Bismut's formula of Malliavin calculus for the mollified diffusion} 
Let $v\in L^2_{\rm{loc}} ([0,\infty) \times (\Omega, \mathcal{F}, \mathbb{P}), \mathbb{R}^d)$, that is, $\mathbb{E} \int_0^t |v(s)|^2 \dif s < \infty$ for all $t >0$. Assume that $v$ is adapted to the filtration  $ (\mathcal{F}_t)_{t\geq 0}$ with $\mathcal{F}_t= \sigma(B_s: 0\leq s \leq t)$, that is, $v(t)$ is  $\mathcal{F}_t$ measurable for $t\geq 0$. Define
\begin{eqnarray}\label{hMCV}
\mathbb{V}(t) \  = \  \int_0^t v(s) \dif s, \quad t\geq 0.
\end{eqnarray}
For $t>0$, let $F_t: \mathcal{C}([0,t],\mathbb{R}^d) \rightarrow \mathbb{R}^d$ be a $\mathcal{F}_t$ measurable map. If the following limit exists
\begin{eqnarray*}
D_{\mathbb{V}} F_t(B) \  = \  \lim_{\e_1 \rightarrow 0} \frac{  F_t(B+\e_1 \mathbb{V}) - F_t(B)  }{\e_1 }
\end{eqnarray*}
in $L^2 (  (\Omega, \mathcal{F}, \mathbb{P}), \mathbb{R}^d  ) $, then $F_t(B)$ is said to be Malliavin differentiable and $D_{\mathbb{V}} F_t (B)$ is called the Malliavin derivative of $F_t(B)$ in the direction $\mathbb{V}$.

\textbf{Bismut's formula.}  For Malliavin differentiable $F_t(B)$ such that $F_t(B), D_{\mathbb{V}} F_t(B) \in L^2((\Omega,\mathcal{F},\mathbb{P}), \mathbb{R}^d)$, we have
\begin{eqnarray}\label{e:BisFor}
\mathbb{E} [D_{\mathbb{V}} F_t(B)]  \ = \   \mathbb{E}  \left[   F_t(B)  \int_0^t \langle  v(s), \dif B_s  \rangle \right].
\end{eqnarray}

The following Malliavin derivative of $X^{\e,x}_t$ along the direction $\mathbb{V}$ exists in $L^2((\Omega,\mathcal{F},\mathbb{P}), \mathbb{R}^d)$ and is defined by
\begin{eqnarray}\label{MaC1}
D_{\mathbb{V}} X^{\e,x}_t &=& \lim_{\e_1 \rightarrow 0 }\frac{  X^{\e,x}_t(B+\e_1 \mathbb{V}) - X^{\e,x}_t(B)  }{\e_1}.
\end{eqnarray}
It satisfies the following equation
\begin{eqnarray*}
D_{\mathbb{V}} X^{\e,x}_t &=& \sigma \mathbb{V}(t)  + \int_0^t \nabla g_{\e} (X_{s}^{\e,x}) D_{\mathbb{V}} X_{s}^{\e,x} \dif s, \quad D_{\mathbb{V}} X_{0}^{\e,x}=0,
\end{eqnarray*}
which is solved by
\begin{eqnarray*}
D_{\mathbb{V}} X^{\e,x}_t &=&  \int_0^t J_{r,t}^{\e,x} \sigma v(r) \dif r.
\end{eqnarray*}

Taking $v(r)=\frac{\sigma^{-1}}{ t}  J^{\e,x}_{r}u$ for $0\leq r \leq t$, by \eqref{hJaF1}, we get
\begin{eqnarray}  \label{e:DVNu}
D_{\mathbb{V}} X^{\e,x}_t&=& \nabla_{u} X^{\e,x}_{t} .
\end{eqnarray}
With the same $v$, a similar straightforward calculation gives that
\begin{eqnarray*}
D_{\mathbb{V}} X_{s}^{\e,x} & = &\frac{s}{t} \nabla_u X_{s}^{\e,x}, \quad 0 \le s \le t.
\end{eqnarray*}

For further use, for $x, u\in \R^d$, we define
\begin{eqnarray} \label{e:Iuxt}
\mathcal{I}_{u}^{\e,x}(t) \ : = \ \frac{1}{ t } \int_0^t \langle \sigma^{-1} J^{\e,x}_{r} u, \dif B_r \rangle, \ \ \ \ \ 
\mathcal{I}_{u}^{x}(t) \ : = \ \frac{1}{ t } \int_0^t \langle \sigma^{-1} J^{x}_{r} u, \dif B_r \rangle.
\end{eqnarray}

Now we are at the position to state the following lemmas.
\begin{lemma}\label{lem:EIm}
For all $x, u \in \R^d$, $m \ge 2$ and $t>0$, we have
\begin{eqnarray}
\E \left|\mathcal{I}_{u}^{\e,x}(t)\right|^m, \E \left|\mathcal{I}_{u}^{x}(t)\right|^m \ &\le& \ \frac{C |u|^{m}}{t^{m/2}} e^{mC_{ {\rm op} }t}, \label{e:IuexEst}  \\
	\lim_{\e \rightarrow 0} \E \left|\mathcal{I}_{u}^{\e,x}(t)- \mathcal{I}_{u}^{x}(t)\right|^m&=&0. \label{e:IueCon}
\end{eqnarray}
\end{lemma}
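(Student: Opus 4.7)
The plan is to reduce both statements to the Burkholder--Davis--Gundy (BDG) inequality applied to the stochastic integrals defining $\mathcal{I}_u^{\e,x}(t)$ and $\mathcal{I}_u^x(t)$, exploiting the uniform operator-norm bounds \eqref{e:JJe} on the Jacobian flows and the almost-sure convergence provided by Lemma \ref{l:XeCon}.

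For the bound \eqref{e:IuexEst}, I would first apply BDG to write
\begin{equation*}
\E |\mathcal{I}_u^{\e,x}(t)|^m \ \le \ \frac{C_m}{t^m} \E \left( \int_0^t |\sigma^{-1} J_r^{\e,x} u|^2 \dif r \right)^{m/2}.
\end{equation*}
Since $\sigma\sigma'$ is uniformly positive definite by the standing assumption, $\|\sigma^{-1}\|_{{\rm op}}$ is a finite constant. Combined with the estimate $\|J_r^{\e,x}\|_{{\rm op}} \le e^{C_{{\rm op}} r} \le e^{C_{{\rm op}} t}$ from \eqref{e:JJe}, the integrand is bounded by $C|u|^2 e^{2C_{{\rm op}} t}$; integrating over $[0,t]$ and taking the $(m/2)$-th power then yields the stated $C|u|^m t^{-m/2} e^{m C_{{\rm op}} t}$. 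The same calculation with $J_r^x$ in place of $J_r^{\e,x}$ delivers the corresponding bound for $\mathcal{I}_u^x(t)$, since \eqref{e:JJe} supplies the identical operator-norm estimate.

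For the convergence \eqref{e:IueCon}, I would apply BDG to the difference:
\begin{equation*}
\E |\mathcal{I}_u^{\e,x}(t) - \mathcal{I}_u^x(t)|^m \ \le \ \frac{C_m}{t^m} \E \left( \int_0^t |\sigma^{-1}(J_r^{\e,x}-J_r^x)u|^2 \dif r \right)^{m/2}.
\end{equation*}
Lemma \ref{l:XeCon} gives $\|J_r^{\e,x} - J_r^x\|_{{\rm op}} \to 0$ almost surely for each $r \in [0,t]$, while \eqref{e:JJe} provides the uniform deterministic domination $\|J_r^{\e,x} - J_r^x\|_{{\rm op}} \le 2 e^{C_{{\rm op}} t}$. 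Two successive applications of the dominated convergence theorem---first in the $r$-integral, path by path, to obtain the pathwise vanishing of $\int_0^t |\sigma^{-1}(J_r^{\e,x}-J_r^x)u|^2 \dif r$, and then on the probability space, with $(C|u|^2 t e^{2C_{{\rm op}} t})^{m/2}$ as a deterministic majorant for the $(m/2)$-th power---drive the right-hand side to zero.

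I do not foresee a substantive obstacle here: the only point requiring care is verifying that the pathwise almost-sure convergence of the Jacobian difference furnished by Lemma \ref{l:XeCon} feeds correctly through both layers of integration, but the uniform deterministic bound from \eqref{e:JJe} dominates in $(\e,r,\omega)$ simultaneously, which closes this gap cleanly. Everything else is a routine BDG plus Grönwall/exponential bookkeeping exercise.
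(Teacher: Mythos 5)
Your proof is correct and follows essentially the same route as the paper: BDG applied to the stochastic integral, the uniform Jacobian bound \eqref{e:JJe} together with the nondegeneracy of $\sigma$ for the moment estimate, and BDG plus dominated convergence (via Lemma \ref{l:XeCon}) for the vanishing of the difference. The only difference is that you spell out the two-layer dominated convergence argument, whereas the paper compresses it into a single appeal to the dominated convergence theorem.
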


\subsubsection{Moment estimate for weighted occupation time} \label{app:occupation}

 We introduce in this section a weighted occupation time for the limiting diffusion $X_t^x$, and study its moment estimate. 
 This result is used in the proof of Lemma \ref{l:XeCon}, and we expect that the method in this section may be used in future research.

For any $\eps>0$, the weighted occupation time $L_t^{\eps,x}$ is defined as
\begin{eqnarray*}
L_t^{\eps,x} &=& \int_0^t \left[-\frac{1}{\eps^2} ({\rm e}^{\prime} X_s^x)^2 +1 \right] 1_{ \{ |{\rm e}^{\prime} X_s^x| \leq \eps \} } \dif s.
\end{eqnarray*}
We know $L_t^{\eps,x} \geq 0$ for all $t\geq 0$. We call $L_t^{\eps,x}$ weighted occupation time because it can be represented as an integral over an occupation measure associated with the limiting diffusion $X_t^x$. 
Namely, we have
\begin{eqnarray*}
L^{\eps,x}_t & = & \int_0^t \psi({\rm e}' X^x_s) 1_{\{|{\rm e}' X^x_s| \le \eps \}} \dif s  \ = \  \int_{|y| \le \eps} \psi(y) A^{x}_t(\dif y),
\end{eqnarray*}
where $\psi(y)=1-\frac{y^2}{\eps^2} $ and $A^{x}_t(\cdot):=\int_0^t  \delta_{  {\rm e}' X^{x}_s}   (\cdot) \dif s$ with $\delta_z(\cdot)$ being the delta function of a given $z$. $A^x_t(\cdot)$ is called the occupation measure of ${\rm e}' X^x_s$ over time $[0,t]$. 
If $\psi(x)=1$, then $L^{\eps,x}_t$ will be the occupation time of $({\rm e}' X^x_s)_{0 \le s \le t}$ on the set $\{|y| \le \eps \}$.

\begin{proposition}\label{lem:occupation}
For $L_t^{\eps,x}$ defined above, there exist some positive constants $C$ and $c$, independent of $\eps$ and $t$, such that
\begin{eqnarray*}
\E L_t^{\eps,x} &\leq&  C\eps e^{c t} (1+|x|)(1+t).
\end{eqnarray*}
\end{proposition}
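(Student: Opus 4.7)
The plan is to apply an It\^o--Tanaka type argument to a smooth antiderivative of the weight $\psi(y)=(1-y^2/\eps^2)1_{\{|y|\le \eps\}}$ evaluated along the scalar projection $Y^x_s:={\rm e}'X^x_s$. By the uniform ellipticity assumption $\xi'\sigma\sigma'\xi\ge c\xi'\xi$, the constant $a:={\rm e}'\sigma\sigma'{\rm e}$ is strictly positive, and $Y^x_s$ is a one-dimensional It\^o process with drift ${\rm e}'g(X^x_s)$ and non-degenerate diffusion coefficient. The linear growth bound $|{\rm e}'g(x)|\le C(1+|x|)$ follows directly from the formula for $g$.

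\textbf{Step 1: constructing the auxiliary $C^2$ function.} I define $F\in C^2(\R,\R_+)$ to be the even function whose second derivative is exactly $\psi$; explicitly, for $0\le y\le \eps$ one has $F'(y)=y-y^3/(3\eps^2)$ and $F(y)=y^2/2-y^4/(12\eps^2)$, while for $y>\eps$ one continues with $F'(y)=2\eps/3$ and $F(y)=5\eps^2/12+(2\eps/3)(y-\eps)$. Two properties matter: first, $\|F'\|_\infty\le 2\eps/3$; second, $0\le F(y)\le C\eps(1+|y|)$ for all $y\in\R$, with a constant $C$ independent of $\eps$. Both are immediate from the explicit expressions.

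\textbf{Step 2: It\^o's formula and rearrangement.} Applying It\^o's formula to $F(Y^x_s)$ and using $F''=\psi$ yields
\begin{equation*}
\tfrac{a}{2}L^{\eps,x}_t \ = \ F(Y^x_t)-F(Y^x_0)-\int_0^t F'(Y^x_s){\rm e}'g(X^x_s)\dif s-\int_0^t F'(Y^x_s){\rm e}'\sigma \dif B_s.
\end{equation*}
Since $F'$ is bounded, the stochastic integral is a true martingale with zero mean, so taking expectations gives
\begin{equation*}
\tfrac{a}{2}\,\E L^{\eps,x}_t \ \le \ \E F(Y^x_t)+F(Y^x_0)+\E\int_0^t |F'(Y^x_s)|\,|{\rm e}'g(X^x_s)|\dif s.
\end{equation*}

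\textbf{Step 3: bounding each term.} Using $|Y^x_s|\le|{\rm e}||X^x_s|$, the bound $F(y)\le C\eps(1+|y|)$, the linear growth of ${\rm e}'g$, and $\|F'\|_\infty\le 2\eps/3$, together with the moment estimate $\E|X^x_s|\le (e^{C_2 s}(|x|^2+1))^{1/2}\le e^{cs}(1+|x|)$ from Lemma \ref{lem:XXem}, I obtain
\begin{equation*}
\E F(Y^x_t)+F(Y^x_0) \ \le \ C\eps\,e^{ct}(1+|x|),\qquad
\E\!\int_0^t |F'(Y^x_s)|\,|{\rm e}'g(X^x_s)|\dif s \ \le \ C\eps\int_0^t e^{cs}(1+|x|)\dif s.
\end{equation*}
Summing these and dividing by $a/2$ yields the claimed bound $\E L^{\eps,x}_t\le C\eps\,e^{ct}(1+|x|)(1+t)$.

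The only delicate point is the choice of $F$: the scaling $F'=O(\eps)$ and $F=O(\eps|y|)$ is what produces the crucial prefactor $\eps$ (rather than a constant) on the right-hand side, and this is what makes the estimate useful as $\eps\downarrow 0$ in the proof of Lemma \ref{l:XeCon}. Once $F$ is in hand, the rest is an elementary It\^o-plus-moment-bound calculation; no reflection-principle or local-time machinery is needed because the ellipticity of $\sigma\sigma'$ on the direction ${\rm e}$ already gives a strictly positive diffusion coefficient for $Y^x_s$.
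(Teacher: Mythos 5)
Your proof is correct and takes essentially the same route as the paper: the auxiliary function $F$ you construct is exactly the paper's $\phi_\eps$ in \eqref{e:rhobar0} (with your $F(y)=2\eps y/3-\eps^2/4$ for $y>\eps$ matching the paper's $\frac{2}{3}\eps y-\frac{1}{4}\eps^2$), and the subsequent It\^o computation, cancellation of the stochastic integral, and moment bound via Lemma \ref{lem:XXem} mirror the paper's argument step for step.
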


\subsection{Proofs of Theorems \ref{thm:CLT} and \ref{thm:MDP}}
 We first prove Theorem \ref{thm:CLT} by It\^{o}'s formula and martingale CLT, and then Theorem \ref{thm:MDP} by a criterion from Wu \cite{WLM1}.
\begin{proof}[Proof of Theorem \ref{thm:CLT}]
For $(X_t)_{t\geq 0} $ in SDE \eqref{hSDEg} with $X_0=x$, using It\^{o}'s formula to $f$ which is the solution to Stein's equation \eqref{e:SE}, we have
\begin{eqnarray*}
f(X_t^x)-f(x)
&=&\int_0^t \mathcal{A}f(X_s^x)\dif s+\int_0^t  (\nabla f(X_s^x))^{\prime} \sigma  \dif B_s  \nonumber \\
&=&\int_0^t [ h(X_s^x)-\mu(h)] \dif s+\int_0^t  (\nabla f(X_s^x))^{\prime} \sigma \dif B_s,
\end{eqnarray*}
which implies that
	\begin{eqnarray*}
		\sqrt{t} \left[\frac{1}{t}\int_0^t \delta_{X_s^{x}}(h)\dif s-\mu(h) \right]
		&=&\frac{1}{\sqrt{t}}\int_0^t [ h(X_s^{x}) -\mu(h) ] \dif s  \\
		&=&\frac{1}{\sqrt{t}} [ f(X_t^x)-f(x)]-\frac{1}{\sqrt{t}} \int_0^t (\nabla f(X_s^x))^{\prime} \sigma \dif B_s.
	\end{eqnarray*}
From Lemma \ref{lem:regf} and the estimate for $\E V(X_t^x)$ in \eqref{e:Vm}, we obtain
	\begin{eqnarray*}
	\E \left| \frac{1}{\sqrt{t}} [ f(X_t^{x})-f(x)] \right|
	&\to& 0 { \ \ \rm as \ \ } t \to \infty.
	\end{eqnarray*}
It follows from Lemmas \ref{lem:regf} and \ref{lem:AV2} and inequality \eqref{e:BV} that  for some constant $C>0$ such that
\begin{eqnarray*}
\mu( |\sigma^{\prime} \nabla f|^2  ) \ \leq \ C\mu(V^4)  \ < \ \infty .
\end{eqnarray*}

We write
	\begin{eqnarray*}
\frac{1}{\sqrt{t}} \int_0^t (\nabla f(X_s^x))^{\prime} \sigma \dif B_s
\ = \
\frac{1}{\sqrt{t}} \left(\int_0^1 + \int_1^2 +\cdots + \int_{\lfloor t \rfloor-1}^{\lfloor t \rfloor} + \int_{\lfloor t \rfloor}^t \right) (\nabla f(X_s^x))^{\prime} \sigma \dif B_s,
	\end{eqnarray*}
and denote
	\begin{eqnarray*}
U_i \ = \  \int_{i-1}^i (\nabla f(X_s^x))^{\prime} \sigma \dif B_s \text{ \ \ for \ \ } i=1,2,\cdots,\lfloor t \rfloor
	\end{eqnarray*}
and
	\begin{eqnarray*}
U_{\lfloor t \rfloor+1} \ = \  \int_{\lfloor t \rfloor}^t (\nabla f(X_s^x))^{\prime} \sigma \dif B_s.
	\end{eqnarray*}
Then one obtains that $U_i$'s are martingale differences and $\E U_i^2 <\infty$ for all $i=1,2,\cdots,\lfloor t \rfloor+1$. We claim that
\begin{eqnarray}
\lim_{t \to \infty} \E\left( \max_{1\leq i \leq \lfloor t \rfloor +1 } \frac{1}{\mu( |\sigma^{\prime} \nabla f|^2 ) t } |U_i|^2 \right) \ = \ 0, \label{e:CLT1} \\
\lim_{t \to \infty} \E \left| \sum_{i=1}^{\lfloor t \rfloor +1 } \frac{1}{\mu( |\sigma^{\prime} \nabla f|^2 ) t } |U_i|^2 -1 \right|^2 \ = \ 0. \label{e:CLT2}
\end{eqnarray}
We observe that  equalities \eqref{e:CLT1} and \eqref{e:CLT2} imply
\begin{eqnarray*}
\E\left( \max_{1\leq i \leq \lfloor t \rfloor +1 } \frac{1}{ \sqrt{\mu( |\sigma^{\prime} \nabla f|^2 ) t} } |U_i| \right) \to 0
\text{ \ \ and \ \ }
\sum_{i=1}^{\lfloor t \rfloor +1 } \frac{1}{\mu( |\sigma^{\prime} \nabla f|^2 ) t } |U_i|^2 \stackrel{p}{\longrightarrow} 1
	\end{eqnarray*}
as $t$ goes to infinity. Using the martingale CLT in Sethuraman \cite[Theorem 2]{SS1} due to McLeish \cite{MDL1}, one has
\begin{eqnarray*}
\frac{1}{\sqrt{\mu(|\sigma^{\prime}  \nabla f|^2) t}}\sum_{i=1}^{\lfloor t \rfloor+1} U_i
\ = \ \frac{1}{\sqrt{\mu(|\sigma^{\prime}  \nabla f|^2) t}} \int_0^t  (\nabla f(X_s^x))^{\prime} \sigma \dif B_s  \ \Rightarrow  \ \mathcal{N} (0,1) \text{ \ \ as \ \ } t \to \infty.
	\end{eqnarray*}
Then, we obtain
	\begin{eqnarray*}
		\sqrt{t} \left[ \frac{1}{t}\int_0^t \delta_{X_s^{x}}(h)\dif s-\mu(h) \right]
		\ \Rightarrow \ \mathcal{N} (0,\mu(|\sigma^{\prime}  \nabla f|^2))
\text{ \ \ as \ \ } t \to \infty.
	\end{eqnarray*}

It remains to show that equalities \eqref{e:CLT1} and \eqref{e:CLT2} hold. For \eqref{e:CLT1}, one has
\begin{eqnarray*}
\E\left( \max_{1\leq i \leq \lfloor t \rfloor +1} |U_i|^2 \right)
&=& \E\left( \max_{1\leq i \leq \lfloor t \rfloor +1} (|U_i|^2 1_{ \{ |U_i|^2\leq \sqrt{t} \} } + |U_i|^2 1_{ \{|U_i|^2>\sqrt{t}  \}})  \right)  \nonumber \\
&\leq& \E\left( \max_{1\leq i \leq \lfloor t \rfloor+1} |U_i|^2 1_{ \{ |U_i|^2\leq \sqrt{t} \} } \right)
+ \E\left( \max_{1\leq i \leq \lfloor t \rfloor+1} |U_i|^2 1_{ \{|U_i|^2> \sqrt{t}  \}}  \right) \nonumber \\
&\leq& \sqrt{t} + \E \left( \max_{1\leq i \leq \lfloor t \rfloor +1} |U_i|^2 1_{ \{|U_i|^2> \sqrt{t} \}}  \right) \nonumber \\
&\leq& \sqrt{t} +(\lfloor t \rfloor+1) \max_{1\leq i \leq \lfloor t \rfloor+1} \E (|U_i|^2 1_{ \{|U_i|^2> \sqrt{t} \}} ),
\end{eqnarray*}
where the last inequality holds from that
\begin{eqnarray*}
\E \left( \max_{1\leq i \leq \lfloor t \rfloor +1} |U_i|^2 1_{ \{|U_i|^2> \sqrt{t} \}}  \right)  \ &\leq& \  \sum_{i=1}^{ \lfloor t \rfloor +1} \E (|U_i|^2 1_{ \{|U_i|^2> \sqrt{t} \}} ).
\end{eqnarray*}
Thus, we obtain
\begin{eqnarray*}
&& \E \left( \max_{1\leq i \leq \lfloor t \rfloor+1 } \frac{1}{ \mu( |\sigma^{\prime} \nabla f|^2 ) t} | U_i |^2 \right)  \\
&\leq& \frac{1}{ \mu( |\sigma^{\prime} \nabla f|^2 ) t} \left[ \sqrt{t} +(\lfloor t \rfloor+1) \max_{1\leq i \leq \lfloor t \rfloor +1} \E (|U_i|^2 1_{ \{|U_i|^2> \sqrt{t} \}} ) \right] \\
&\to & 0 \text{ \ \ as \ \ } t\to \infty.
\end{eqnarray*}

For \eqref{e:CLT2}, we have
\begin{eqnarray*}
&& \E \left| \sum_{i=1}^{ \lfloor t \rfloor +1 } \frac{1}{\mu( |\sigma^{\prime} \nabla f|^2 ) t } |U_i|^2 -\frac{ \lfloor t \rfloor +1 }{t} \right|^2
\  =  \ \E \left| \frac{1}{t} \sum_{i=1}^{ \lfloor t \rfloor +1 } \left( \frac{1}{\mu( |\sigma^{\prime} \nabla f|^2 )  } |U_i|^2 - 1 \right) \right|^2 \\
&=& \frac{1}{t^2} \sum_{i=1}^{ \lfloor t \rfloor +1 }\E\left( \frac{1}{\mu( |\sigma^{\prime} \nabla f|^2 )  } |U_i|^2 - 1 \right)^2 \\
&&+\frac{2}{t^2}\sum_{i<j}\E\left[ ( \frac{1}{\mu( |\sigma^{\prime} \nabla f|^2 )  } |U_i|^2 - 1 ) ( \frac{1}{\mu( |\sigma^{\prime} \nabla f|^2 )  } |U_j|^2 - 1 )  \right]  \\
&=:& {\rm I} + {\rm II}.
\end{eqnarray*}
Using the Burkholder-Davis-Gundy inequality, there exists some positive constant $C$ such that
\begin{eqnarray*}
\E|U_i|^4
&=& \E \left| \int_{i-1}^i (\nabla f(X_s^x))^{\prime} \sigma \dif B_s \right|^4
\ \leq \ C \E \left| \int_{i-1}^i |(\nabla f(X_s^x))^{\prime} \sigma|^2 \dif s \right|^2.
\end{eqnarray*}
Combining with Lemma \ref{lem:regf}, we know
\begin{eqnarray*}
\E\left( \frac{1}{\mu( |\sigma^{\prime} \nabla f|^2 )  } |U_i|^2 - 1 \right)^2
&=& \E \left[ \frac{1}{\mu^2(|\sigma^{\prime} \nabla f|^2)} |U_i|^4 - \frac{2}{\mu( |\sigma^{\prime}\nabla f|^2)}|U_i|^2+1 \right]  \\
&\leq & C(1+\E |X_i^x|^8).
\end{eqnarray*}
Combining this with \eqref{e:Vm} in Lemma \ref{lem:AV2} and inequality  \eqref{e:BV}, we know
\begin{eqnarray*}
{\rm I}
\ &=& \ \frac{1}{t^2} \sum_{i=1}^{ \lfloor t \rfloor +1 } \E\left( \frac{1}{\mu( |\sigma^{\prime} \nabla f|^2 )  } |U_i|^2 - 1 \right)^2
\ \leq \  \frac{C}{t^2} \sum_{i=1}^{ \lfloor t \rfloor +1 }  (1+\E |X_i^x|^8)
\to 0, {\ \ \rm as \ \ } t \to \infty.
\end{eqnarray*}
We also have
\begin{eqnarray*}
&& {\rm II}
\ = \  \frac{2}{t^2}\sum_{i<j}\E \left[ \left( \frac{1}{\mu( |\sigma^{\prime} \nabla f|^2 ) } |U_i|^2 - 1 \right) \left( \frac{1}{\mu( |\sigma^{\prime} \nabla f|^2 )  } |U_j|^2 - 1 \right) \right] \\
&=& \frac{2}{t^2}\sum_{i=1}^{\lfloor t \rfloor } \sum_{j=i+1}^{\lfloor t \rfloor +1} \E \left\{ \left(\frac{1}{\mu(|\sigma^{\prime} \nabla f|^2)}|U_i|^2-1\right) \E \left[ \left ( \frac{1}{\mu( |\sigma^{\prime} \nabla f|^2 )  } |U_j|^2 - 1 \right) |\mathcal{F}_i  \right]  \right\} \\
&=& \frac{2}{\mu( |\sigma^{\prime} \nabla f|^2 ) t^2}\sum_{i=1}^{\lfloor t \rfloor } \sum_{j=i+1}^{\lfloor t \rfloor +1} \E \left\{ \left(\frac{1}{\mu(|\sigma^{\prime} \nabla f|^2)}|U_i|^2-1\right) \E[(|U_j|^2 - \mu( |\sigma^{\prime} \nabla f|^2 )  ) |\mathcal{F}_i  ] \right\} \\
&=& \frac{2}{\mu( |\sigma^{\prime} \nabla f|^2 ) t^2}\sum_{i=1}^{\lfloor t \rfloor } \sum_{j=i+1}^{\lfloor t \rfloor +1} \E\left\{ \left(\frac{1}{\mu(|\sigma^{\prime} \nabla f|^2)}|U_i|^2-1\right) \int_{j-1}^j [ \E|\sigma^{\prime} \nabla f(X_s^{X_i^x})|^2 - \mu( |\sigma^{\prime} \nabla f|^2 ) ] \dif s \right\} \\
&=& \frac{2}{\mu( |\sigma^{\prime} \nabla f|^2 ) t^2}\sum_{i=1}^{\lfloor t \rfloor } \E \left\{ \left(\frac{1}{\mu(|\sigma^{\prime} \nabla f|^2)}|U_i|^2-1\right) \int_{i}^{\lfloor t \rfloor+1} [ \E|\sigma^{\prime} \nabla f(X_s^{X_i^x})|^2 - \mu( |\sigma^{\prime} \nabla f|^2 ) ] \dif s \right\}.
\end{eqnarray*}
It follows from Lemma \ref{lem:AV2} that
\begin{eqnarray*}
{\rm II} &\leq& \frac{C}{t^2}\sum_{i=1}^{\lfloor t \rfloor } \E \left\{ \left|\frac{1}{\mu(|\sigma^{\prime} \nabla f|^2)}|U_i|^2-1 \right| \int_{0}^{\lfloor t \rfloor+1} (1+V^2(X_i^x))e^{-cs} \dif s \right\} \\
&\leq& \frac{C}{t^2}\sum_{i=1}^{\lfloor t \rfloor } \E \left\{ \left|\frac{1}{\mu(|\sigma^{\prime} \nabla f|^2)}|U_i|^2-1 \right| (1+V^2(X_i^x))  \right\} \\
&\leq& \frac{C}{t^2}\sum_{i=1}^{\lfloor t \rfloor } \left[\E \left|\frac{1}{\mu(|\sigma^{\prime} \nabla f|^2)}|U_i|^2-1\right|^2\right]^{\frac{1}{2}} [1+\E V^4(X_i^x)]^{\frac{1}{2}} \\
&\to& 0, {\ \ \rm as \ \ } t \to \infty.
\end{eqnarray*}
Combining estimates for ${\rm I}$ and ${\rm II}$, we obtain
\begin{eqnarray*}
\lim_{t \to \infty} \E \left| \sum_{i=1}^{ \lfloor t \rfloor +1 } \frac{1}{\mu( |\sigma^{\prime} \nabla f|^2 ) t } |U_i|^2 -\frac{ \lfloor t \rfloor +1 }{t} \right|^2  \  =  \ 0.
\end{eqnarray*}
Thus, \eqref{e:CLT2} holds.  The proof is complete.
\end{proof}

\medskip

\begin{proof}[Proof of Theorem \ref{thm:MDP}]
	It follows from Lemma \ref{lem:AV2} that for any function $\tl{f}$ satisfying $\tl{f}(x)\leq 1+V(x)$ for all $x\in \R^d$ with $V$ in \eqref{e:Lypfun}, there exist some positive constants $C$ and $c$ such that
\begin{eqnarray*}
| P_t \tl{f}(x) - \mu(\tl{f}) |  \ \leq \ C(1+V(x)) e^{-c t}.
	\end{eqnarray*}
	It follows from Wu \cite[Remark (2.17)]{WLM2} that $P_t$ has a spectral gap near its largest eigenvalue $1$ which implies that $1$ is an isolate eigenvalue. Since $P_t c = c$ for all $t>0$, one has the property that $1$ is an eigenvalue of $P_t$ for all $t>0$. If there exists some function $\hat{f}$ satisfying $0\neq \hat{f}(x) \leq 1+V(x)$ for all $x\in \R^d$ such that  $P_{t_1} \hat{f} = \lambda \hat{f}$ for some $t_1 > 0$ and $\lambda \in \mathbb{C}$ with $|\lambda|=1$, then $\lambda^{\tl{n}} \hat{f} = P_{\tl{n} t_1} \hat{f} \to \mu(\hat{f})$ as $\tl{n} \to \infty$, so that $\hat{f}$ has to be constant and $\lambda=1$. Thus, $1$ is a simple and the only eigenvalue with modulus $1$ for $P_t$.

For $h\in \mathcal{B}_b(\R^d, \R)$, it follows from Wu \cite[Theorem 2.1]{WLM1} that
$$\PP\left( \frac{1}{a_t\sqrt{t}}\int_0^t [ h(X_s^x) - \mu(h)  ]  \dif s \in  \,\, \cdot \,\, \right)$$
satisfies the large deviation principle with speed $a_t^{-2}$ and rate function $I_h(z)=\frac{z^2}{2\mathcal{V}(h)}$ with
$$\mathcal{V}(h) \ = \ 2\int_0^{\infty} \langle P_t h, h-\mu(h) \rangle_{\mu} \dif t,$$
that is,
\begin{eqnarray*}
	-\inf_{z \in A^{ {\rm o} } } I_h (z)
&\leq& \liminf_{t\to\infty}\frac{1}{a_t^2}\log \mathbb{P} \left( \frac{\sqrt{t}}{a_t}  \left[\mcl E^x_t(h)-\mu(h)\right] \in A \right)  \\
&\leq& \limsup_{t\to\infty}\frac{1}{a_t^2}\log \mathbb{P} \left( \frac{\sqrt{t}}{a_t}  \left[\mcl E^x_t(h)-\mu(h)\right] \in A \right)
\ \leq  \ -\inf_{z \in \bar{A}} I_h (z),
	\end{eqnarray*}
	where $\bar{A}$ and $A^{ {\rm o} }$ are the closure and  interior of set $A$ respectively.

We claim that
\begin{eqnarray}\label{e:Vh}
\mathcal{V}(h) \ = \ \mu(|\sigma^{\prime}\nabla f|^2 ),
	\end{eqnarray}
where $f$ is the solution to Stein's equation \eqref{e:SE}. Then the desired result holds.

Now, we show that the claim \eqref{e:Vh} holds. With some calculations, one has
\begin{eqnarray*}
&& \frac{1}{t}\E^{\mu} \left( \int_0^t [h(X_s^x) - \mu(h)] \dif s \right)^2  \\
&=& \frac{1}{t}\E^{\mu} \left( \int_0^t \int_0^t [h(X_s^x)-\mu(h)][h(X_u^x)-\mu(h)] \dif u \dif s  \right) \\
&=& \frac{2}{t}\E^{\mu} \left( \int_0^t \int_0^u [h(X_s^x)-\mu(h)][h(X_u^x)-\mu(h)] \dif s \dif u \right) \\
&=& \frac{2}{t} \int_0^t \int_0^u \E^{\mu} \{ [h(X_s^x)-\mu(h)] \E \{ [h(X_u^x)-\mu(h)]| X_s^x \} \} \dif s \dif u \\
&=& \frac{2}{t} \int_0^t \int_0^u \E^{\mu} \{ h(X_s^x) \E \{ [h(X_u^x)-\mu(h)]| X_s^x \} \} \dif s \dif u,
	\end{eqnarray*}
where the third equality holds from conditional probability and the last equality holds from
	\begin{eqnarray*}
\int_0^t \int_0^u \E^{\mu} \{ \mu(h) \E \{ [h(X_u^x)-\mu(h)]| X_s^x \} \} \dif s \dif u
&=& \mu(h) \int_0^t \int_0^u \E^{\mu} \{  h(X_u^x)-\mu(h) \} \dif s \dif u \\
&=&0.
	\end{eqnarray*}

Furthermore, for all $0\leq s \leq u<\infty$, one has
\begin{eqnarray*}
\E^{\mu} \{ h(X_s^x) \E \{ [h(X_u^x)-\mu(h)]| X_s^x \} \}
&=&  \E^{\mu} \{ h(X_s^x) [ P_{u-s}h(X_s^x) - \mu(h) ]   \} \\
& = & \int_{\R^d} h(y)[ P_{u-s}h(y)-\mu(h) ] \mu(\dif y),
	\end{eqnarray*}
which implies that
	\begin{eqnarray*}
&& \frac{2}{t} \int_0^t \int_0^u \E^{\mu} \{ h(X_s^x) \E \{ [h(X_u^x)-\mu(h)]| X_s^x \} \} \dif s \dif u \\
&=& \frac{2}{t} \int_0^t\int_0^u \int_{\R^d} h(y)[ P_{u-s}h(y)-\mu(h) ] \mu(\dif y) \dif s \dif u \\
&=& \int_{\R^d} h(y) \frac{2}{t} \int_0^t\int_0^u [ P_{u-s}h(y)-\mu(h) ]  \dif s \dif u \mu(\dif y) \\
&=& \int_{\R^d} h(y) \frac{2}{t} \int_0^t\int_0^u [ P_{\tl{s}}h(y)-\mu(h) ]  \dif \tl{s} \dif u \mu(\dif y).
\end{eqnarray*}
Using the Hospital's rule, one has
\begin{eqnarray*}
&& \lim_{t\to\infty}\int_{\R^d} h(y) \frac{2}{t} \int_0^t\int_0^u [ P_{\tl{s}}h(y)-\mu(h) ]  \dif \tl{s} \dif u \mu(\dif y) \\
&=&\lim_{t\to\infty}2 \int_{\R^d} h(y) \int_0^{t} [P_{\tl{s}} h(y) - \mu(h)] \dif \tl{s} \mu(\dif y) \\
&=& 2 \int_{\R^d} h(y) \int_0^{\infty} [P_{\tl{s}} h(y) - \mu(h)] \dif \tl{s} \mu(\dif y).
\end{eqnarray*}

From the expression of $\mathcal{V}(h)$, we know
\begin{eqnarray*}
\mathcal{V}(h)
&=& 2\int_0^{\infty} \langle P_t h, h-\mu(h) \rangle_{\mu} \dif t
\ = \ 2\int_0^{\infty} \langle P_t h - \mu(h), h \rangle_{\mu} \dif t,
\end{eqnarray*}
so that,
\begin{eqnarray*}
\frac{1}{t}\E^{\mu} \left( \int_0^t [h(X_s^x) - \mu(h)] \dif s \right)^2
&\to& \mathcal{V}(h)
\text{ \ \ as \ \ } t \to \infty.
	\end{eqnarray*}

Using It\^{o}'s formula and Stein's equation \eqref{e:SE}, we have
\begin{eqnarray*}
f(X_t^x)-f(x)
&=&\int_0^t \mathcal{A}f(X_s^x)\dif s+\int_0^t  (\nabla f(X_s^x))^{\prime} \sigma  \dif B_s  \nonumber \\
&=&\int_0^t [ h(X_s^x)-\mu(h)] \dif s+\int_0^t  (\nabla f(X_s^x))^{\prime} \sigma \dif B_s,
\end{eqnarray*}
which implies that
\begin{eqnarray*}
\frac{1}{t}\E^{\mu} \left( \int_0^t [h(X_s^x) - \mu(h)] \dif s \right)^2
&=& \E^{\mu} \left( \frac{1}{\sqrt{t}} [ f(X_t^x)-f(x)] - \frac{1}{\sqrt{t}} \int_0^t  (\nabla f(X_s^x))^{\prime} \sigma \dif B_s \right)^2 \\
&\to& \mu(|\sigma^{\prime} \nabla f |^2) \text{ \ \ as \ \ } t \to \infty.
	\end{eqnarray*}
Thus, we know $\mathcal{V}(h) = \mu( |\sigma^{\prime} \nabla f|^2 )$. The claim \eqref{e:Vh} holds. The proof is complete.
\end{proof}

\section{ Proof of Theorem \ref{thm:EMMDP} } \label{sec:EMCLTMDP}
In order to determine the variance in Theorem \ref{thm:EMMDP}, we need to study the third Stein's equation as the following 
\begin{equation} \label{e:Stein-3}
\mathcal A_\eta f(x)=h(x)-\tilde{\mu}_\eta(h),
\end{equation} 
where $\mathcal A_\eta f(x)=\tl{\mathcal{P}}_{\eta} f(x)-f(x)$ with $\tl{\mathcal{P}}_{\eta} f(x)=\E f(\tl X^{\eta,x}_1)$ for all $x \in \R^d$. It is easy to verify that the solution of the equation \eqref{e:Stein-3} is 
\begin{eqnarray} \label{e:fRep}
f(x) \ &=&-\sum_{k=0}^{\infty} \tl{\mathcal{P}}^k_{\eta} \left( h(x)-\tl{\mu}_{\eta}(h) \right),
\end{eqnarray}
because of $ \tl{\mathcal{P}}_{\eta} f(x) = -\sum_{k=1}^{\infty} \tl{\mathcal{P}}^k_{\eta} \left( h(x)-\tl{\mu}_{\eta}(h) \right)$.

\begin{proof}[Proof of Theorem \ref{thm:EMMDP}]
(i) From Proposition \ref{p:GeneralErgodicEM}, we know the Markov chain $(\tl{X}_{k}^{\eta})_{k\in \mathbb{N}_0}$ is exponentially ergodic under total variation distance and $\tl{\mu}_{\eta}(V^{\ell})\leq C$ for any integers $\ell$ and $V$ in \eqref{e:Lypfun}. It follows from Jones \cite[Theorem 9]{jones2004markov} that for any function $h$ satisfying $|h|\leq V^{\ell}$ with some integer $\ell$ and any initial distribution $\tl{X}_0^{\eta}$, one has
     \begin{eqnarray*}
\sqrt{n}\left[ \mcl E_n^{\eta,x}(h)  -  \tl{\mu}_{\eta}(h)  \right] \  \Rightarrow \ \mathcal{N}(0, \sigma_h^2)
	\end{eqnarray*}
with
\begin{eqnarray}\label{e:sigmaf2}
\sigma_h^2  
&=& \langle h-\tl{\mu}_{\eta}(h), h-\tl{\mu}_{\eta}(h)   \rangle_{\tl{\mu}_{\eta}} + 2\sum_{k=1}^{\infty} \langle \tl{\mathcal{P}}^k_{\eta} h, h-\tl{\mu}_{\eta}(h) \rangle_{\tl{\mu}_{\eta}}.
\end{eqnarray}

We give some calculations for the variance of $\sqrt{n}\left[ \mcl E_n^{\eta,x}(h)  -  \tl{\mu}_{\eta}(h)  \right]$. Since $\E [\sqrt{n} ( \mcl E_n^{\eta,x}(h)  -  \tl{\mu}_{\eta}(h) ) ]=0$, one has 
\begin{eqnarray}\label{e:var-d1}
 \E |\sqrt{n}\left[ \mcl E_n^{\eta,x}(h)  -  \tl{\mu}_{\eta}(h)  \right]|^2
&=& \E| \frac{1}{\sqrt{n}} \sum_{k=0}^{n-1} [h(\tl{X}^{\eta,x}_k) -  \tl{\mu}_{\eta}(h) ]  |^2  \nonumber \\
&=& \frac{1}{n} \sum_{k=0}^{n-1}\E | h(\tl{X}^{\eta,x}_k) -  \tl{\mu}_{\eta}(h)  |^2 \nonumber \\
&& + \frac{2}{n} \sum_{i=0}^{n-1} \sum_{j=0}^{i-1} 
\E \{   [h(\tl{X}^{\eta,x}_i) -  \tl{\mu}_{\eta}(h) ]  [h(\tl{X}^{\eta,x}_j) -  \tl{\mu}_{\eta}(h) ]      \}.
\end{eqnarray}	
We aim to the second term on the right hand of above equality \eqref{e:var-d1}. By the conditional expectation, one has 
\begin{eqnarray}\label{e:var-d2}
&& \frac{2}{n} \sum_{i=0}^{n-1} \sum_{j=0}^{i-1}  \E \{   [h(\tl{X}^{\eta,x}_i) -  \tl{\mu}_{\eta}(h) ]  [h(\tl{X}^{\eta,x}_j) -  \tl{\mu}_{\eta}(h) ]   \}  \nonumber \\
&=& \frac{2}{n} \sum_{i=0}^{n-1} \sum_{j=0}^{i-1}  \E \{   [h(\tl{X}^{\eta,x}_j) -  \tl{\mu}_{\eta}(h) ]  \E[ (h(\tl{X}^{\eta,x}_i) -  \tl{\mu}_{\eta}(h) ) | \tl{X}^{\eta,x}_j  ]    \}  \nonumber \\
&=&  \frac{2}{n} \sum_{j=0}^{n-2}  \E \{   [h(\tl{X}^{\eta,x}_j) -  \tl{\mu}_{\eta}(h) ]  \E[ \sum_{i=j+1}^{n-1}  (h(\tl{X}^{\eta,x}_i) -  \tl{\mu}_{\eta}(h) ) | \tl{X}^{\eta,x}_j  ]    \}   \nonumber  \\
&=&  \frac{2}{n} \sum_{j=0}^{n-2}  \E \{   [h(\tl{X}^{\eta,x}_j) -  \tl{\mu}_{\eta}(h) ]  \E[ \sum_{k=1}^{n-1-j} \tl{P}_{\eta}^k (h(\tl{X}^{\eta,x}_j) -  \tl{\mu}_{\eta}(h) ) ]    \}  \nonumber  \\
&=&  \frac{2}{n} \sum_{j=0}^{n-2}  \E \{   [h(\tl{X}^{\eta,x}_j) -  \tl{\mu}_{\eta}(h) ] [ \sum_{k=1}^{n-1-j} \tl{P}_{\eta}^k (h(\tl{X}^{\eta,x}_j) -  \tl{\mu}_{\eta}(h) ) ]    \}  \nonumber  \\
&\to& 2\sum_{k=1}^{\infty} \langle \tl{\mathcal{P}}^k_{\eta} [h(x)-\tl{\mu}_{\eta}(h)], h(x)-\tl{\mu}_{\eta}(h) \rangle_{\tl{\mu}_{\eta}}, \quad {\rm as \ \ } n\to \infty.
\end{eqnarray}

Combining \eqref{e:var-d1} and \eqref{e:var-d2},  we know 
\begin{eqnarray*}
\E |\sqrt{n}\left[ \mcl E_n^{\eta,x}(h)  -  \tl{\mu}_{\eta}(h)  \right]|^2
&\to&  \langle h-\tl{\mu}_{\eta}(h), h-\tl{\mu}_{\eta}(h)   \rangle_{\tl{\mu}_{\eta}} + 2\sum_{k=1}^{\infty} \langle \tl{\mathcal{P}}^k_{\eta} h, h-\tl{\mu}_{\eta}(h) \rangle_{\tl{\mu}_{\eta}}, 
\end{eqnarray*}	
as $n\to \infty$, which implies the equality \eqref{e:sigmaf2} holds.

Since $\langle \tl{\mu}_{\eta}(h), h-\tl{\mu}_{\eta}(h) \rangle_{\tl{\mu}_{\eta}}=0$ and \eqref{e:fRep}, it is easy to verify that  
\begin{eqnarray*}
\tl{ \mcl V} (h) &=&  \langle f,  f \rangle_{\tl{\mu}_{\eta}}-\langle \tl{\mathcal{P}}_{\eta} f, \tl{\mathcal{P}}_{\eta} f \rangle_{\tl{\mu}_{\eta}} \\  
&=& \langle h-\tl{\mu}_{\eta}(h), h-\tl{\mu}_{\eta}(h)   \rangle_{\tl{\mu}_{\eta}} + 2\sum_{k=1}^{\infty} \langle \tl{\mathcal{P}}^k_{\eta} h, h-\tl{\mu}_{\eta}(h) \rangle_{\tl{\mu}_{\eta}}
\ \ = \ \ \sigma_h^2.
\end{eqnarray*}

(ii) It follows from the proof of Proposition \ref{p:GeneralErgodicEM} that there exist some positive constants $C$ and $c$ such that
\begin{eqnarray*}
\|\tilde{\mathcal{P}}_{\eta}^n(x,\cdot) -\tl{\mu}_{\eta} \|_{1+V} \ := \ \sup_{|f|\leq 1+V} | \tilde{\mathcal{P}}_{\eta}^n f(x)-\tl{\mu}_{\eta}(f)|
\ \leq \ C\eta^{-1} e^{-c n\eta}.
	\end{eqnarray*}
	It follows from Wu \cite[Remark (2.17)]{WLM2} that $\tilde{\mathcal{P}}^n_{\eta}$ has a spectral gap near its largest eigenvalue $1$ which implies that $1$ is an isolate eigenvalue. Since $\tilde{\mathcal{P}}^k_{\eta} c = c$ for all $k \in \mathbb{N}_0$, one has  that $1$ is an eigenvalue of $\tilde{\mathcal{P}}^k_{\eta}$ for all $k \in \mathbb{N}_0$. If there exists some function $\hat{f}$ satisfying $0\neq \hat{f}(x) \leq 1+V(x)$ for all $x\in \R^d$ such that  $\tilde{\mathcal{P}}^{k_1}_{\eta} \hat{f} = \lambda \hat{f}$ for some $k_1 \in \mathbb{N}_0$ and $\lambda \in \mathbb{C}$ with $|\lambda|=1$, then $\lambda^{\tl{n}} \hat{f} = \tilde{\mathcal{P}}^{\tl{n} k_1}_{\eta} \hat{f} \to \mu(\hat{f})$ as $\tl{n} \to \infty$, so that $\hat{f}$ has to be constant and $\lambda=1$. Thus, $1$ is a simple and the only eigenvalue with modulus $1$ for $\tilde{\mathcal{P}}_{\eta}^n$.

For $h\in \mathcal{B}_b(\R^d, \R)$, it follows from Wu \cite[Theorem 2.1]{WLM1} that
$$\PP\left( \frac{\sqrt{n}}{a_n}  \left[ \mcl E_n^{\eta,x}(h) - \tl{\mu}_{\eta}(h)\right]  \in \,\, \cdot \,\, \right)$$
satisfies the large deviation principle  with speed $a_n^{-2}$ and rate function $I_h(z)=\frac{z^2}{2\tl{ \mcl V} (h)}$ with $\tl{ \mcl V}(h)$ in \eqref{e:EMVh}, that is,
	\begin{eqnarray*}
	-\inf_{z \in A^{ {\rm o} } } \frac{ z^2 }{2 \tl{ \mcl V}(h)}
	\ &\leq& \
	\liminf_{n \to\infty}\frac{1}{a_n^2}\log \mathbb{P} \left( \frac{\sqrt{n}}{a_n}  \left[ \mcl E_n^{\eta,x}(h) - \tl{\mu}_{\eta}(h)\right]    \in A\right)   \\
	\ &\leq& \
	\limsup_{ n \to\infty}\frac{1}{a_n^2}\log \mathbb{P} \left( \frac{\sqrt{n}}{a_n}  \left[ \mcl E_n^{\eta,x}(h) - \tl{\mu}_{\eta}(h)\right]  \in A\right)
\ \leq \ -\inf_{z \in \bar{A}} \frac{ z^2 }{ 2 \tl{ \mcl V}(h) },
	\end{eqnarray*}
	where $\bar{A}$ and $A^{ {\rm o} }$ are the closure and  interior of set $A$ respectively. The proof is complete.
\end{proof}

\begin{appendix}
\section{ Exponential ergodicity for Markov chain $(\tl{X}_k^{\eta})_{k\in \mathbb{N}_0}$ }  \label{App:GeneralErgodicEM}

It follows from Dieker and Gao \cite[Theorem 1]{DG1} that there exists a positive definite matrix $\tilde{Q}=( \tilde{Q}_{ij}  )_{d\times d}$ with $\sum_{i,j=1}^d |\tl{Q}_{ij}|=1$ such that
\begin{eqnarray*}
\tilde{Q}(-R) + (-R)^{\prime} \tilde{Q} \ &<&  \ 0, \\
\tilde{Q}(-(I-p{\rm e}^{\prime})R) +(-R^{\prime}(I-{\rm e}p^{\prime}))\tilde{Q}
\ &\leq& \ 0.
\end{eqnarray*}
Let  the function $\tl{V}\in \mathcal{C}^2(\R^d,\R_+)$ be constructed in  Dieker and Gao \cite[Eq. (5.24)]{DG1}, that is,
\begin{eqnarray*}
\tl{V}(y)&=&({\rm e}^\prime y)^2+\kappa[y-p\phi({\rm e}^\prime y)]^\prime \tilde{Q}[y-p\phi({\rm e}^\prime y)], \qquad  \forall y\in \R^d,
\end{eqnarray*}
where $\kappa$ is a positive constant and $\phi\in \mathcal{C}^2(\R, \R)$ is a real-valued function which is defined as below:
\begin{eqnarray*}
\phi(z) &=&
\left\{
\begin{array}{lll}
z,                                   &  \text{if }  z \ge 0,  \\
-\frac{1}{2},                        & \text{if }   z \leq -1, \\
-\frac{1}{2}z^4 - z^3 + z,           &\text{if }     -1<z<0,
\end{array}
\right.
\end{eqnarray*}
it is easy to check that
\begin{eqnarray*}
(\nabla \tl{V}(y) )^{\prime} &=& 2({\rm e}^{\prime} y) {\rm e}^{\prime} + 2\kappa [ y^{\prime} - p^{\prime} \phi ({\rm e}^{\prime} y) ] \tilde{Q} [ I - p {\rm e}^{\prime} \dot{\phi}({\rm e}^{\prime} y)] \text{ \ \ for all \ \ } y \in \R^d.
\end{eqnarray*}
Then there exists some positive constant $C$ such that
\begin{eqnarray}\label{e:NtlV}
|\nabla \tl{V}(y)|
\ &\leq& \ C(1+|y|) \text{ \ \ for all \ \ } y \in \R^d.
\end{eqnarray}

The Lyapunov condition holds for function $\tl{V}$ from Dieker and Gao \cite[proof of Theorem 3]{DG1}, that is, there exist some positive constants $c_1$ and $\check{c}_1$ satisfying
\begin{eqnarray}\label{e:AtlV}
\mathcal{A}\tl{V}(y)
\ &\leq& \ -c_1 \tl{V}(y)+\check{c}_1
\text{ \ \ for any \ \ }y \in \R^d,
\end{eqnarray}
where $\mathcal{A}$ is in \eqref{e:A}. Also, it follows from Dieker and Gao \cite[proof of Theorem 3]{DG1} that there exist some positive constants $\hat{C}_1, \hat{C}_2$, $\hat{c}_1$ and $\hat{c}_2$ such that
\begin{eqnarray}\label{e:BtlV}
\hat{c}_1|y|^2 - \hat{c}_2
\  \leq \
\tl{V}(y)
\ \leq  \ \hat{C}_1|y|^2+\hat{C}_2
\text{ \ \ for all \ \ } y \in \R^d.
\end{eqnarray}

Let the Lyapunov function $V$ be defined as
\begin{eqnarray}\label{e:Lypfun}
V(y) &=& \tl{V}(y)+\hat{c}_2  \nonumber  \\
&=& ({\rm e}^\prime y)^2+\kappa[y-p\phi({\rm e}^\prime y)]^\prime \tilde{Q}[y-p\phi({\rm e}^\prime y)] + \hat{c}_2, \qquad  \forall y\in \R^d,
\end{eqnarray}
where $\hat{c}_2$ is in \eqref{e:BtlV}. Then one has
\begin{eqnarray}\label{e:BV}
\hat{c}_1|y|^2
\ \leq \
V(y)
\ \leq  \ \hat{C}_1|y|^2+\hat{C}_2+\hat{c}_2
\text{ \ \ for all \ \ } y \in \R^d.
\end{eqnarray}
Furthermore, we know
\begin{eqnarray*}
\nabla V(y) \  =  \ \nabla \tl{V}(y),
\qquad
\nabla^2 V(y) \ = \ \nabla^2 \tl{V}(y)
\text{ \ \ for all \ \ } y \in \R^d .
\end{eqnarray*}
Thus, \eqref{e:NtlV} also holds for function $V$, that is,
\begin{eqnarray}\label{e:NV}
|\nabla V(y)|
&\leq& C(1+|y|) \text{ \ \ for all \ \ } y \in \R^d,
\end{eqnarray}
and $\mathcal{A} V(y)=\mathcal{A}\tl{V}(y)$ for all $y \in \R^d$.
Combining with \eqref{e:AtlV}, the Lyapunov condition also holds for the function $V$, that is,
\begin{eqnarray}\label{e:AV}
\mathcal{A} V(y)
&\leq& -c_1 (\tl{V}(y)+\hat{c}_2)+\check{c}_1 +c_1\hat{c}_2
\ \leq \ -c_1 V(y)+\breve{c}_1
\text{ \ \ for any \ \ }y \in \R^d
\end{eqnarray}
with $\breve{c}_1=\check{c}_1 +c_1\hat{c}_2$.

\begin{lemma}\label{lem:AV2}
For $\mathcal{A}$ in \eqref{e:A}, $V$ in \eqref{e:Lypfun} and integers $\ell\geq 1$, there exists some positive constant $\breve{c}_{\ell}$ depending on $\ell$ such that
\begin{eqnarray*}
\mathcal{A} V^{\ell}(x)
&\leq& -c_1 V^{\ell}(x)+\breve{c}_{\ell},
\end{eqnarray*}
and
\begin{eqnarray}\label{e:Vm}
\mathbb{E} V^{\ell}(X_t^{x})
\ &\leq& \ e^{-c_1 t }V^{\ell}(x)+\frac{\breve{c}_{\ell}(1-e^{-c_1t})}{c_1},
\quad \forall t\geq 0.
\end{eqnarray}
In addition, $\mu(V^{\ell})\leq \frac{\breve{c}_{\ell}}{c_1}$ and $\mu(|\cdot|^{2\ell})\leq C$ where the constant $C$ depends on $\ell$. Furthermore, for any positive integers $\ell$ and probability measure $\nu$ satisfying $\nu(V^{\ell})<\infty$, there exist some positive constants $C$ and $c$, independent of $t$ such that
\begin{eqnarray*}
d_W(P_{t}^* \nu, \mu)  \ &\leq& \ C(1+\nu(V)) e^{-c t},  \\
\| P_{t}^* \nu - \mu\|_{{\rm TV}}  \ &\leq&  \ \| P_{t}^* \nu - \mu\|_{{\rm TV, V^{\ell}}}  \ \leq  \ C(1+\nu(V^{\ell})) e^{-c t}.
\end{eqnarray*}
\end{lemma}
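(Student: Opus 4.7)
The plan is to prove the four claims in the order they are stated, building each on the previous. The first task is to upgrade the $\ell=1$ Lyapunov inequality \eqref{e:AV} to $V^{\ell}$. By chain rule,
\begin{eqnarray*}
\mathcal{A} V^{\ell}(x) \ = \ \ell V^{\ell-1}(x)\, \mathcal{A} V(x) + \tfrac{\ell(\ell-1)}{2} V^{\ell-2}(x)\, \langle \sigma\sigma',\,\nabla V(x)\otimes \nabla V(x)\rangle_{\rm HS}.
\end{eqnarray*}
Using the gradient bound \eqref{e:NV} together with the lower bound $V(x)\ge \hat c_1 |x|^2$ from \eqref{e:BV}, one sees $|\nabla V(x)|^{2}\le C(1+|x|^{2})\le C'\,V(x)$. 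Plugging this and \eqref{e:AV} into the display above yields $\mathcal{A} V^{\ell}(x)\le -\ell c_1 V^{\ell}(x)+ C_{\ell}\,V^{\ell-1}(x)$. A Young-type inequality $V^{\ell-1}\le \varepsilon V^{\ell}+C_{\varepsilon,\ell}$ with $\varepsilon=(\ell-1)c_1/C_{\ell}$ absorbs the lower-order term into the negative drift, producing the claimed $\mathcal{A} V^{\ell}(x)\le -c_1 V^{\ell}(x)+\breve c_{\ell}$.

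Next, to establish the moment bound \eqref{e:Vm}, I would apply It\^o's formula to $V^{\ell}(X_t^{x})$; because $V^{\ell}$ is only polynomially growing while the SDE coefficients grow linearly, the stochastic integral is a priori a local martingale, so a localization by hitting times $\tau_n=\inf\{t:|X_t^x|\ge n\}$ is needed to get $\mathbb{E} V^{\ell}(X_{t\wedge\tau_n}^{x})\le V^{\ell}(x) + \int_0^t \mathbb{E}[\mathcal{A} V^{\ell}(X_{s\wedge\tau_n}^{x})]\dif s$. Fatou's lemma, combined with Part~1, then gives $\frac{\dif}{\dif t}\mathbb{E} V^{\ell}(X_t^{x})\le -c_1\mathbb{E} V^{\ell}(X_t^{x})+\breve c_{\ell}$ in the integrated sense, and Gr\"onwall's inequality completes the bound. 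Passing $t\to\infty$ in \eqref{e:Vm} with $X_0$ sampled from $\mu$ (so the left-hand side equals $\mu(V^{\ell})$ at every time by invariance) yields $\mu(V^{\ell})\le \breve c_{\ell}/c_1$; combining this with \eqref{e:BV} in the form $|x|^{2\ell}\le \hat c_1^{-\ell}V^{\ell}(x)$ gives the moment bound $\mu(|\cdot|^{2\ell})\le C$.

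The last two displays are convergence statements, and I would obtain them from a standard Harris-type theorem. The ingredients are (i) the Lyapunov inequality $\mathcal{A} V^{\ell}\le -c_1 V^{\ell}+\breve c_{\ell}$ from Part~1, and (ii) a minorization condition on the level sets $\{V^{\ell}\le r\}$. Ingredient (ii) follows because $\sigma\sigma'$ is uniformly positive definite by assumption, so the transition kernel $P_t(x,\cdot)$ admits a strictly positive continuous density on $\R^d$ for $t>0$, which is uniformly bounded below on compacts in $x$; this is the standard Doeblin/minorization condition used in Meyn--Tweedie and Hairer--Mattingly \cite{MH2,HM1,MT2}. Invoking Hairer--Mattingly type results then gives $\|P_t^{*}\nu-\mu\|_{{\rm TV},V^{\ell}}\le C(1+\nu(V^{\ell}))e^{-ct}$ (which immediately implies $\|P_t^{*}\nu-\mu\|_{\rm TV}\le\|\cdot\|_{{\rm TV},V^{\ell}}$), and for $\ell=1$ the Wasserstein-1 bound follows from \eqref{e:dWandTV} since $1+V(x)\ge 1+\hat c_1|x|^2\ge c'|x|$.

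The main obstacle I anticipate is technical rather than conceptual: the localization in the It\^o step has to be carried out carefully, because $V^{\ell}$ grows like $|x|^{2\ell}$ and one must verify that $\mathbb{E}|X_{t\wedge\tau_n}^{x}|^{2\ell}$ stays finite as $n\to\infty$ before one is entitled to drop the stopping times -- this is handled by a bootstrap, first proving the bound with $\ell=1$ (which only uses second moments and is classical), then iterating. The second, more subtle point is checking that the minorization condition holds uniformly on level sets of $V^{\ell}$, which is where the non-degeneracy assumption $\xi'\sigma\sigma'\xi\ge c\xi'\xi$ on the driving Brownian motion plays a crucial role, ensuring the heat kernel estimates needed for Doeblin's condition.
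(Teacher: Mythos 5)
Your overall strategy tracks the paper's proof closely. Parts 1 and 2 (the chain-rule identity $\mathcal{A}V^{\ell}=\ell V^{\ell-1}\mathcal{A}V+\tfrac{\ell(\ell-1)}{2}V^{\ell-2}\langle\nabla V(\nabla V)',\sigma\sigma'\rangle_{\rm HS}$, absorption of the lower-order term by Young's inequality, It\^o plus Gr\"onwall) are exactly the paper's argument; your added remark about localizing the stochastic integral is a reasonable technical note, and in fact the paper has the $m$-th moment estimate of Lemma \ref{lem:XXem} available to justify that the local martingale is a true martingale. Part 4 (Harris/Lyapunov-plus-minorization, with the minorization supplied by the non-degeneracy of $\sigma\sigma'$) is the same route the paper takes indirectly by citing the computations of Dieker and Gao, so this is fine.

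There is, however, a genuine gap in your Part 3. To deduce $\mu(V^{\ell})\le\breve c_{\ell}/c_1$, you average \eqref{e:Vm} over $X_0\sim\mu$ and use invariance to equate both sides to $\mu(V^{\ell})$. But this rearrangement only produces a bound if you already know $\mu(V^{\ell})<\infty$: when $\mu(V^{\ell})=\infty$ the inequality becomes $\infty\le\infty$ and carries no information, so the step is circular. The paper avoids this by introducing a bounded cutoff $\chi:[0,\infty)\to[0,1]$ with $\chi\equiv 1$ on $[0,1]$ and $\chi\equiv 0$ on $[2,\infty)$, applying \eqref{e:Vm} to bound $\mathbb{E}\bigl[V^{\ell}(X_t^{x})\chi(|X_t^{x}|/L)\bigr]$, letting $t\to\infty$ (legitimate because $V^{\ell}\chi(\cdot/L)$ is bounded and continuous, so weak convergence of $P_t(x,\cdot)$ to $\mu$ applies), and then letting $L\to\infty$ with monotone convergence. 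You should replace your circular step either with this truncation device, or with a direct appeal to the general Meyn--Tweedie result that a Lyapunov drift condition of the form $\mathcal{A}W\le -cW+b$ forces $\mu(W)<\infty$ for any invariant measure $\mu$.
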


In order to prove Proposition \ref{p:GeneralErgodicEM}, we first show that $(\tl{X}_k^{\eta})_{k\in \mathbb{N}_0}$ in \eqref{e:XD} is strong Feller and irreducible and the Lyapunov condition holds, and then we can get the exponential ergodicity from \cite{DFMS1, TT1}. We note that rather than using the classical Lyapunov function criterion, it is more convenient to use a modified version of the criterion in Douc et al. \cite{DFMS1} to establish the exponential ergodicity.

Rewrite \eqref{e:XD} as
\begin{eqnarray}\label{e:reXD}
\tl{X}_{k+1}^{\eta}
&=& \tl{X}_{k}^{\eta}+g(\tl{X}_{k}^{\eta}) \eta + \sigma (B_{(k+1){\eta}} -B_{k\eta}),
\end{eqnarray}
where $k\in \mathbb{N}_0$ and $\tl{X}^{\eta}_{0}$ is the initial value.

\begin{lemma}\label{lem:GePe}
For $\tl{\mathcal{P}}_{\eta}$ defined before and $f\in \mathcal{B}_b(\R^d,\R)$, one has
\begin{eqnarray*}
\| \nabla \tl{\mathcal{P}}_{\eta} f \|_{\infty} &\leq& \| f \|_{\infty} (1+C_{\rm op}\eta) \| \sigma^{-1} \|^2_{\rm op} \| \sigma \|_{\rm op}\eta^{-\frac{1}{2}}d^{\frac{1}{2}},
\end{eqnarray*}
which implies $(\tl{X}_k^{\eta})_{k\in \mathbb{N}_0}$ is strong Feller. Furthermore, $(\tl{X}_{k}^{\eta})_{k\in \mathbb{N}_0}$ is irreducible.
\end{lemma}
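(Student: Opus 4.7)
The plan is to exploit the explicit Gaussian form of the one-step transition kernel and transfer all derivatives onto the Gaussian density, so that the non-differentiability of $g$ never appears in the computation. Setting $T(x) := x + g(x)\eta$, the scheme \eqref{e:XD} gives $\tl{X}^{\eta,x}_1 \sim \mathcal{N}(T(x), \eta \sigma\sigma')$, so that
\[
\tl{\mathcal{P}}_\eta f(x) \ = \ \int_{\R^d} f(y)\, q(y-T(x))\dif y,
\]
where $q(w) := (2\pi\eta)^{-d/2}|\det \sigma^{-1}|\exp\!\bigl(-\tfrac{1}{2\eta}w'(\sigma\sigma')^{-1}w\bigr)$. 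Even though $g$ is not differentiable, the explicit form of $g$ together with the definition \eqref{Cop} of $C_{\rm op}$ yields $|g(x)-g(y)|\le C_{\rm op}|x-y|$, so $T$ is Lipschitz with constant $1+C_{\rm op}\eta$.

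For arbitrary $h\in\R^d$, set $\Delta := T(x+h)-T(x)$, whence $|\Delta|\le (1+C_{\rm op}\eta)|h|$. I would perform the substitution $z = y-T(x)$, then invoke the fundamental theorem of calculus together with the identity $\nabla q(w) = -\eta^{-1}(\sigma\sigma')^{-1}w\, q(w)$, and finally substitute $w = z-s\Delta$ to obtain the representation
\[
\tl{\mathcal{P}}_\eta f(x+h)-\tl{\mathcal{P}}_\eta f(x)
\ = \ \frac{1}{\eta}\int_0^1\!\!\int_{\R^d} f(w+s\Delta+T(x))\,\langle(\sigma\sigma')^{-1}w,\Delta\rangle\, q(w)\dif w \dif s.
\]
Bounding $|f|\le\|f\|_\infty$, using $|\langle(\sigma\sigma')^{-1}w,\Delta\rangle|\le\|(\sigma\sigma')^{-1}\|_{\rm op}|w||\Delta|\le\|\sigma^{-1}\|^2_{\rm op}|w||\Delta|$, and computing $\int|w|q(w)\dif w = \mathbb{E}|\sqrt{\eta}\sigma\xi_1|\le\sqrt{\eta}\|\sigma\|_{\rm op}\sqrt{d}$, I would conclude
\[
|\tl{\mathcal{P}}_\eta f(x+h)-\tl{\mathcal{P}}_\eta f(x)|
\ \le\ \|f\|_\infty(1+C_{\rm op}\eta)\|\sigma^{-1}\|^2_{\rm op}\|\sigma\|_{\rm op}\eta^{-1/2}d^{1/2}\,|h|,
\]
which yields the claimed bound on $\|\nabla\tl{\mathcal{P}}_\eta f\|_{\infty}$ (interpreted via Rademacher's theorem as the essential supremum of the a.e.\ gradient of the Lipschitz function $\tl{\mathcal{P}}_\eta f$) and in particular shows $\tl{\mathcal{P}}_\eta f \in \mathcal{C}(\R^d,\R)$, giving the strong Feller property.

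For irreducibility, the explicit density $(x,y)\mapsto q(y-T(x))$ is strictly positive everywhere on $\R^d\times\R^d$; therefore $\tl{\mathcal{P}}_\eta(x,B) > 0$ for every non-empty open $B\subset\R^d$ and every $x\in\R^d$. This is (even a one-step) topological irreducibility of the chain $(\tl X^\eta_k)_{k\in\N_0}$, and I would package it as $\psi$-irreducibility with respect to Lebesgue measure as required for the subsequent ergodicity arguments in Appendix \ref{App:GeneralErgodicEM}.

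The only delicate point is the gradient estimate, specifically avoiding $\nabla g$: by pushing the derivative in $x$ entirely onto the smooth Gaussian density $q$ and using $C_{\rm op}$ only through the Lipschitz constant of $T$, we sidestep the lack of smoothness of $g$ altogether. Everything else reduces to the elementary Gaussian moment $\mathbb{E}|\sqrt{\eta}\sigma\xi_1|$ and the operator-norm identity $\|(\sigma\sigma')^{-1}\|_{\rm op}=\|\sigma^{-1}\|^2_{\rm op}$.
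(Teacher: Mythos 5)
Your proposal is correct, and it accomplishes the same estimate as the paper but by a slightly different and arguably cleaner route. The paper differentiates the Gaussian kernel $p(\eta,x,z)$ directly in $x$, which produces the factor $(I+\eta\nabla g(x))$ inside the integral; it then bounds $\|I+\eta\nabla g(x)\|_{\rm op}\le 1+C_{\rm op}\eta$ and uses the Gaussian moment $\mathbb{E}|\sqrt{\eta}\sigma\xi_1|\le\sqrt{\eta d}\|\sigma\|_{\rm op}$, exactly as you do. Implicitly the paper is invoking the a.e.\ derivative of the piecewise-linear $g$ (which fails to exist on $\{{\rm e}'x=0\}$), whereas you replace this by a finite-difference argument: by writing $\tl{\mathcal{P}}_\eta f(x) = (f*\check q)(T(x))$ with $T$ Lipschitz of constant $1+C_{\rm op}\eta$, applying the fundamental theorem of calculus to the smooth Gaussian $q$, and changing variables, you obtain the Lipschitz bound on $\tl{\mathcal{P}}_\eta f$ without ever writing $\nabla g$. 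This sidesteps the measure-zero nondifferentiability set entirely and is the more careful formulation; the resulting Lipschitz constant is precisely the paper's gradient bound, so the two are equivalent. Your reduction of $\|(\sigma\sigma')^{-1}\|_{\rm op}$ to $\|\sigma^{-1}\|_{\rm op}^2$ and the Gaussian moment computation match the paper. For irreducibility, both you and the paper argue from the strict positivity of the one-step Gaussian density; the paper additionally records the induction step showing $\tl{\mathcal{P}}_\eta^{k}(x,B(y,r))>0$ for all $k$, which you gesture at with the remark on packaging as $\psi$-irreducibility, so that part is also essentially the same.
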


\begin{lemma}\label{lem:Xgesm}
For $(X^{x}_t)_{t\geq 0}$ and $(\tl{X}_{k}^{\eta,x})_{k\in \mathbb{N}_0}$ in \eqref{hSDEg} and \eqref{e:reXD}, respectively, and integers $\ell\geq 1$, there exists some positive constant $\tl{C}_{\ell}$ depending on $\ell$ not on $\eta$ such that for $0\leq s <1$,
\begin{eqnarray*}
\E |X^{x}_s-x|^{2\ell}  \ \leq \ \tl{C}_{\ell}(1+V^{\ell}(x))s^{\ell}
{\rm \ \ \ and \ \ \ }
\E |X^{x}_{\eta} - \tl{X}^{\eta,x}_{1}|^{2\ell}  \ \leq \  \tl{C}_{\ell} (1+V^{\ell}(x))\eta^{3\ell},
\end{eqnarray*}
where $V$ is in \eqref{e:Lypfun}.
\end{lemma}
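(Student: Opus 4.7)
\textbf{Proof proposal for Lemma \ref{lem:Xgesm}.}  The plan is to rewrite both quantities in their mild/integral form so that the driving Brownian motion either contributes the expected Gaussian term or cancels entirely, and then to bound the drift increments using the linear growth and the (global) Lipschitz property of $g$, together with the uniform moment estimate $\E V^\ell(X_t^x)\leq e^{-c_1 t}V^\ell(x)+C$ furnished by Lemma \ref{lem:AV2} and the quadratic lower bound $V(y)\ge \hat c_1 |y|^2$ from \eqref{e:BV}.

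\emph{Step 1 (first estimate).} Writing the SDE \eqref{hSDEg} in integral form,
\[
X^{x}_{s}-x \;=\; \int_{0}^{s} g(X^{x}_{r})\,\dif r \;+\; \sigma B_{s},
\]
so that by the elementary inequality $|a+b|^{2\ell}\le 2^{2\ell-1}(|a|^{2\ell}+|b|^{2\ell})$ and Jensen's inequality applied to the drift integral,
\[
\E|X^{x}_{s}-x|^{2\ell}
\;\le\; C s^{2\ell-1}\int_{0}^{s}\E|g(X^{x}_{r})|^{2\ell}\,\dif r \;+\; C\,\E|\sigma B_{s}|^{2\ell}.
\]
Using $|g(y)|\le \tl C_{\rm op}(1+|y|)$ from \eqref{tlCop}, combined with Lemma \ref{lem:AV2} and $V(y)\ge \hat c_1 |y|^2$, we get $\E|g(X^{x}_{r})|^{2\ell}\le C(1+V^{\ell}(x))$ for $r\in[0,1]$. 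The Gaussian term is bounded by $C s^{\ell}$. Collecting these for $0\le s<1$ yields the first inequality with $\tl C_\ell$ independent of $\eta$.

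\emph{Step 2 (second estimate).}  Starting from \eqref{e:reXD} with $\tl{X}_0^\eta=x$, one has $\tl X^{\eta,x}_{1}=x+g(x)\eta+\sigma B_\eta$, while $X^{x}_\eta=x+\int_{0}^{\eta}g(X^{x}_{s})\,\dif s+\sigma B_\eta$. The Brownian increments therefore cancel, and
\[
X^{x}_{\eta}-\tl X^{\eta,x}_{1} \;=\; \int_{0}^{\eta}\bigl[g(X^{x}_{s})-g(x)\bigr]\,\dif s.
\]
The drift $g(y)=-\beta p - Ry + (R-\alpha I)p({\rm e}'y)^{+}$ is globally Lipschitz (since $y\mapsto y^{+}$ is $1$-Lipschitz), with Lipschitz constant controlled by $C_{\rm op}$ in \eqref{Cop}; hence $|g(X^{x}_{s})-g(x)|\le C|X^{x}_{s}-x|$. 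Jensen's inequality then gives
\[
\E\bigl|X^{x}_{\eta}-\tl X^{\eta,x}_{1}\bigr|^{2\ell}
\;\le\; C\,\eta^{2\ell-1}\int_{0}^{\eta}\E|X^{x}_{s}-x|^{2\ell}\,\dif s,
\]
and invoking Step 1 yields $\E|X^{x}_{\eta}-\tl X^{\eta,x}_{1}|^{2\ell}\le C(1+V^{\ell}(x))\,\eta^{2\ell-1}\cdot \eta^{\ell+1}=C(1+V^{\ell}(x))\,\eta^{3\ell}$, as required.

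\emph{Main obstacle.} Both steps are essentially routine once the two structural facts about $g$ are in hand: the linear growth $|g(y)|\le \tl C_{\rm op}(1+|y|)$ and the global Lipschitz bound $|g(y)-g(z)|\le C_{\rm op}|y-z|$, each inherited from the piecewise linear form of $g$ and the $1$-Lipschitzness of $(\cdot)^{+}$. The only genuinely delicate point is that the constants in the bounds must be independent of $\eta$; this is automatic in Step 2 because the noise increments cancel and only the drift enters, and in Step 1 because the moment bound from Lemma \ref{lem:AV2} is uniform on compact time intervals and $s<1$.
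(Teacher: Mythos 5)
Your proposal is correct and follows essentially the same path as the paper: write both quantities in integral form, bound the drift via linear growth in Step 1 and via the global Lipschitz constant of $g$ in Step 2, and use the uniform-in-time moment bound $\E V^\ell(X^x_r)\le C(1+V^\ell(x))$ for $r\in[0,1]$ from Lemma \ref{lem:AV2}, with $s<1$ absorbing the $s^{2\ell}$ term into $s^\ell$. The only stylistic difference is that you invoke the $1$-Lipschitzness of $(\cdot)^+$ directly, whereas the paper cites $\|\nabla g(x)\|_{\rm op}\le C_{\rm op}$; both encode the same Lipschitz bound on $g$.
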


\begin{lemma}\label{lem:PXeD}
For $(\tl{X}^{\eta,x}_{k})_{k\in \mathbb{N}_0}$ in \eqref{e:reXD}, $V$ in \eqref{e:Lypfun} and integers $\ell\geq 1$, there exist some constants $\check{\gamma}_{\ell} \in(0,1)$ and $\check{K}_{\ell} \in[0,\infty)$  such that
\begin{eqnarray*}
\tilde{\mathcal{P}}_{\eta} V^{\ell}(x) &\leq& \check{\gamma}_{\ell} V^{\ell}(x)+\check{K}_{\ell},
\end{eqnarray*}
where $\check{\gamma}_{\ell}=e^{-c_1 \eta} + \tl{C}_{\ell} \eta^{\frac{3}{2}}$ and $\check{K}_{\ell}= \frac{\breve{c}_{\ell}}{c_1}(1-e^{-c_1\eta}) + \tl{C}_{\ell} \eta^{\frac{3}{2}}$ with $c_1$ and $\breve{c}_{\ell}$ in Lemma \ref{lem:AV2} and the constant $ \tl{C}_{\ell} $ depends on $\ell$ but not on $\eta$.
\end{lemma}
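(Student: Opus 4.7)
The plan is to compare the one-step transition of the EM scheme with the true diffusion at time $\eta$, use the Lyapunov bound already established for the latter in Lemma \ref{lem:AV2}, and show that the discrepancy is of higher order. Specifically, I would write
\begin{equation*}
\tilde{\mathcal{P}}_{\eta}V^{\ell}(x) \ = \ \mathbb{E}V^{\ell}(X^{x}_{\eta}) + \mathbb{E}\bigl[V^{\ell}(\tl{X}^{\eta,x}_{1}) - V^{\ell}(X^{x}_{\eta})\bigr],
\end{equation*}
where the first term is controlled directly by \eqref{e:Vm} in Lemma \ref{lem:AV2}, yielding
\begin{equation*}
\mathbb{E}V^{\ell}(X^{x}_{\eta}) \ \leq \ e^{-c_{1}\eta}V^{\ell}(x) + \frac{\breve{c}_{\ell}(1-e^{-c_{1}\eta})}{c_{1}}.
\end{equation*}

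For the discrepancy term, I would exploit the fact that $V \in \mathcal{C}^{2}(\R^{d},\R_{+})$ with $V(y) \leq C(1+|y|^{2})$ from \eqref{e:BV} and $|\nabla V(y)| \leq C(1+|y|)$ from \eqref{e:NV}. Since $\nabla V^{\ell}(y) = \ell V^{\ell-1}(y)\nabla V(y)$, this gives $|\nabla V^{\ell}(y)| \leq C(1+|y|^{2\ell-1})$, and a mean value argument along the segment joining $X^{x}_{\eta}$ and $\tl{X}^{\eta,x}_{1}$ yields
\begin{equation*}
\bigl|V^{\ell}(\tl{X}^{\eta,x}_{1}) - V^{\ell}(X^{x}_{\eta})\bigr| \ \leq \ C\bigl(1+|\tl{X}^{\eta,x}_{1}|^{2\ell-1}+|X^{x}_{\eta}|^{2\ell-1}\bigr)\bigl|\tl{X}^{\eta,x}_{1}-X^{x}_{\eta}\bigr|.
\end{equation*}
Applying the Cauchy--Schwarz inequality and invoking (i) the moment bound \eqref{e:Vm} in Lemma \ref{lem:AV2} (with exponent $2\ell-1$) to control $\E|X^{x}_{\eta}|^{4\ell-2}$ by $C(1+V^{2\ell-1}(x))$, (ii) the explicit representation $\tl{X}^{\eta,x}_{1}=x+g(x)\eta+\sqrt{\eta}\,\sigma\xi_{1}$ together with $|g(x)|\leq \tl{C}_{\mathrm{op}}(1+|x|)$ and Gaussian moments of $\xi_{1}$ to bound $\E|\tl{X}^{\eta,x}_{1}|^{4\ell-2}$ by $C(1+V^{2\ell-1}(x))$ uniformly for small $\eta$, and (iii) the strong error estimate $\E|X^{x}_{\eta}-\tl{X}^{\eta,x}_{1}|^{2}\leq \tl{C}_{1}(1+V(x))\eta^{3}$ from Lemma \ref{lem:Xgesm}, I would then obtain
\begin{equation*}
\bigl|\E[V^{\ell}(\tl{X}^{\eta,x}_{1}) - V^{\ell}(X^{x}_{\eta})]\bigr| \ \leq \ C\bigl(1+V^{(2\ell-1)/2}(x)\bigr)\bigl(1+V^{1/2}(x)\bigr)\eta^{3/2} \ \leq \ \tl{C}_{\ell}\eta^{3/2}\bigl(1+V^{\ell}(x)\bigr).
\end{equation*}
Combining the two pieces produces exactly the stated constants
\[
\check{\gamma}_{\ell}=e^{-c_{1}\eta}+\tl{C}_{\ell}\eta^{3/2},\qquad \check{K}_{\ell}=\tfrac{\breve{c}_{\ell}}{c_{1}}(1-e^{-c_{1}\eta})+\tl{C}_{\ell}\eta^{3/2}.
\]

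Finally, to secure $\check{\gamma}_{\ell}\in(0,1)$ and $\check{K}_{\ell}\in[0,\infty)$, I would observe that for $\eta$ sufficiently small one has $e^{-c_{1}\eta}\leq 1-\tfrac{c_{1}}{2}\eta$, whence $\check{\gamma}_{\ell}\leq 1-\tfrac{c_{1}}{2}\eta+\tl{C}_{\ell}\eta^{3/2}<1$ as soon as $\eta<(c_{1}/(2\tl{C}_{\ell}))^{2}$, while $\check{K}_{\ell}$ is manifestly nonnegative and finite. The main obstacle is the third step: reconciling the high-order polynomial growth $|x|^{2\ell-1}$ of $|\nabla V^{\ell}|$ with the strong $L^{2}$ error $\eta^{3/2}\sqrt{1+V(x)}$ so that the resulting bound multiplies $V^{\ell}(x)$ rather than a higher power; the uniform-in-$\eta$ control of the moments of $\tl{X}^{\eta,x}_{1}$ via its explicit Gaussian representation, rather than via an iterated EM estimate, is the key point that makes this balancing work.
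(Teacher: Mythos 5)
Your proposal is correct and follows essentially the same route as the paper: decompose $\tilde{\mathcal{P}}_{\eta}V^{\ell}(x)$ into $\E V^{\ell}(X^{x}_{\eta})$ plus the discrepancy $\E\bigl[V^{\ell}(\tl{X}^{\eta,x}_{1})-V^{\ell}(X^{x}_{\eta})\bigr]$, control the first term by \eqref{e:Vm}, expand the difference to first order using $|\nabla V^{\ell}(y)|\leq C(1+|y|^{2\ell-1})$, and absorb the error into an $\eta^{3/2}(1+V^{\ell}(x))$ bound via Hölder and Lemma \ref{lem:Xgesm}. The only cosmetic difference is in the Hölder step: the paper keeps a $|\tl{X}^{\eta,x}_{1}-X^{x}_{\eta}|^{2\ell-1}$ factor and applies Hölder with exponents $\bigl(\tfrac{2\ell}{2\ell-1},2\ell\bigr)$ so that only the $L^{2\ell}$ strong error from Lemma \ref{lem:Xgesm} is needed, whereas you use symmetric Cauchy--Schwarz with the $L^{2}$ strong error but must additionally bound $\E|\tl{X}^{\eta,x}_{1}|^{4\ell-2}$ via the explicit one-step Gaussian form; both balance to the same $\eta^{3/2}(1+V^{\ell}(x))$ rate.
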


We give proofs for Lemmas \ref{lem:AV2}, \ref{lem:GePe} \ref{lem:Xgesm} and \ref{lem:PXeD} at the end of Appendix \ref{App:GeneralErgodicEM}.

\begin{proof}[Proof of Proposition \ref{p:GeneralErgodicEM}]
The process $(\tl{X}_k^{\eta})_{k\in \mathbb{N}_0}$ is strong Feller and irreducible from Lemma \ref{lem:GePe}. Then $(\tl{X}_k^{\eta})_{k\in \mathbb{N}_0}$ has at most one invariant measure from Peszat and Zabczyk  \cite[Theorem 1.4]{PZ1}. Combining Feller property in Lemma \ref{lem:GePe} and Lyapunov condition in Lemma \ref{lem:PXeD}, we know $(\tl{X}_k^{\eta})_{k\in \mathbb{N}_0}$ is ergodic with unique invariant measure $\tl \mu_{\eta}$ from  Meyn and Tweedie  \cite[Theorem 4.5]{MT2}. Thus, we obtain the exponential ergodicity from \cite{DFMS1, TT1}.

For any $n\in \mathbb{N}_0$ and $x\in \R^d$, let $V$ be in \eqref{e:Lypfun},
\begin{eqnarray*}
V_{n}(x) \ = \ e^{\frac{c_1}{8} n\eta}(1+V(x)), \quad
r(n) \  = \  \frac{c_1}{8}\eta e^{\frac{c_1}{8}n\eta}, \quad
\Psi(x) \  =  \ 1+V(x),
\end{eqnarray*}
and
\begin{eqnarray*}
\mathscr{C}\  =  \ \{x: V(x) \  \leq  \ \frac{8 e^{\frac{c_1}{8}\eta}}{c_1 \eta}(1+\check{K}_{1}-\check{\gamma}_{1})-1 \},
\quad
b\  =  \ \frac{8e^{\frac{c_1}{8}\eta}} {c_1\eta}(1+\check{K}_{1}-\check{\gamma}_{1}),
\end{eqnarray*}
where $\check{\gamma}_{1}=e^{-c_1 \eta} + \tl{C}_{1} \eta^{\frac{3}{2}}$, $\check{K}_{1}= \frac{\breve{c}_{1}}{c_1}(1-e^{-c_1\eta}) + \tl{C}_{1} \eta^{\frac{3}{2}}$ with $c_1$ and $\breve{c}_{1}$ in \eqref{e:AV}. 
The set $\mathscr{C}$ is compact from \eqref{e:BV}. 
It follows from Lemma \ref{lem:PXeD} that
\begin{eqnarray*}
&& \tilde{\mathcal{P}}_{\eta} V_{n+1}(x) + r(n)\Psi(x) \\
&\leq&  \check{\gamma}_{1}e^{\frac{c_1}{8} \eta} V_n(x) +e^{\frac{c_1}{8}(n+1)\eta} (1+\check{K}_{1}-\check{\gamma}_{1})
+\frac{c_1}{8}\eta V_n(x) \\
&=& V_n(x)+\left( \check{\gamma}_{1}e^{\frac{c_1}{8}\eta} -1+\frac{c_1}{8}\eta \right) e^{\frac{c_1}{8}n\eta}(V(x)+1) +e^{\frac{c_1}{8}(n+1)\eta} (1+\check{K}_{1}-\check{\gamma}_{1})  \\
&=& V_n(x)+ \frac{c_1}{8} \eta e^{\frac{c_1}{8}n\eta}  \left( \frac{ \check{\gamma}_{1}e^{\frac{c_1}{8}\eta}-1 +\frac{c_1}{8}\eta}{ \frac{c_1}{8} \eta} (V(x)+1) + \frac{e^{\frac{c_1}{8}\eta}}{\frac{c_1}{8}\eta} (1+\check{K}_{1}-\check{\gamma}_{1}) \right) \\
&\leq& V_n(x) + b r(n)1_{\mathscr{C}}(x),
\end{eqnarray*}
where the last inequality holds from that
$\check{\gamma}_{1}e^{\frac{c_1}{8}\eta} -1+\frac{c_1}{8}\eta\leq -\frac{1}{8} c_1 \eta$ for small enough $\eta>0$.

We claim that the compact set $\mathscr{C}$ is petite.  It follows from Tuominen and Tweedie \cite[Theorem 2.1]{TT1} or Douc et al. \cite[Theorem 1.1]{DFMS1}  that
\begin{eqnarray*}
\lim_{n\to \infty} r(n) \|\tilde{\mathcal{P}}_{\eta}^n(x,\cdot) -\tl{\mu}_{\eta} \|_{\Psi} \ = \ 0,
\end{eqnarray*}
where $\|\tilde{\mathcal{P}}_{\eta}^n(x,\cdot) -\tl{\mu}_{\eta} \|_{\Psi}= \sup_{|h|\leq \Psi} | \tilde{\mathcal{P}}_{\eta}^n h(x)-\tl{\mu}_{\eta}(h)|$.

Since $h\in {\rm Lip}_0(1)$, it follows from the inequality \eqref{e:BV} that $h(x)\leq C(1+V(x))= C \Psi(x)$ for all $x\in \R^d$ with some constant $C\geq 1$. Then for any integers $k\geq 1$ and measure $\nu$ with $\nu(V)<\infty$, one has
\begin{eqnarray*}
d_W( (\tl{\mathcal{P}}_{\eta}^k)^* \nu, \tl{\mu}_{\eta})
\ &\leq& \ C\eta^{-1} e^{-c k\eta}, \\
\| (\tl{\mathcal{P}}_{\eta}^k)^* \nu- \tl{\mu}_{\eta} \|_{\rm TV}
\ &\leq& \ C\eta^{-1} e^{-c k\eta}.
\end{eqnarray*}
It follows from Lemma \ref{lem:PXeD} that
\begin{eqnarray*}
\int_{\R^d} \tilde{\mathcal{P}}_{\eta} V^{\ell}(x) \tl{\mu}_{\eta}(\dif x)
\ &\leq& \ \int_{\R^d} \check{\gamma}_{\ell} V^{\ell}(x) \tl{\mu}_{\eta}(\dif x) +\check{K}_{\ell},
\end{eqnarray*}
such that
\begin{eqnarray*}
\tl{\mu}_{\eta}(V^{\ell})
\ &\leq& \  \check{\gamma}_{\ell} \tl{\mu}_{\eta}(V^{\ell}) +\check{K}_{\ell},
\end{eqnarray*}
that is,
\begin{eqnarray*}
\tl{\mu}_{\eta}(V^{\ell})
\ &\leq& \ \frac{\check{K}_{\ell}}{1-\check{\gamma}_{\ell}}
\  =  \ \frac{\frac{\breve{c}_{\ell}}{c_1}(1-e^{-c_1\eta}) + \tl{C}_{\ell}  \eta^{\frac{3}{2}}}{1-e^{-c_1 \eta}- \tl{C}_{\ell}\eta^{\frac{3}{2}}}
\  \leq  \  \frac{2\frac{\breve{c}_{\ell}}{c_1} c_1 \eta }{\frac{1}{2}c_1 \eta}
\  = \  \frac{4\breve{c}_{\ell}}{c_1},
\end{eqnarray*}
where the last inequality holds from Taylor expansion for $e^{-c_1 \eta}$ with small $\eta>0$. This implies the desired inequality from the relationship between $V$ and $|\cdot|^2$ in \eqref{e:BV}.

To show the compact set $\mathscr{C}$ is petite.  It suffices to show that
\begin{eqnarray}\label{e:petite}
p(\eta,x,z) \ \geq \ c \nu(z), \quad \forall x\in \mathscr{C},
\end{eqnarray}
where $p(\eta,x,z)$ is the density of $\tl{X}_1^{\eta,x}$, $c$ is some positive constant and $\nu$ is a probability measure from Tuominen and Tweedie \cite[p. 778]{TT1}. Since
\begin{eqnarray}\label{e:pe}
p(\eta,x,z)
&=& ((2\pi)^d \eta^d {\rm det}(\sigma\sigma^{\prime}) )^{-\frac{1}{2}} \exp\left(-(z-x-\eta g(x))^{\prime} \frac{(\sigma\sigma^{\prime})^{-1}} {2\eta}(z-x-\eta g(x)) \right), \nonumber \\
\end{eqnarray}
denoting $\lambda_M$ and $\lambda_m$ as the maximum and minimum eigenvalues of matrix $\sigma \sigma^{\prime}$, respectively, and from the fact
$$
|z-x-\eta g(x)|^2 \ \leq \  2|z|^2 + 4|x|^2 + 8\tl{C}_{\rm op}^2 \eta^2(1+|x|^2), \quad  \forall x, z \in \R^d,
$$
we obtain that $p(\eta,x,z)$ is bigger than
\begin{eqnarray*}
\left( (2\pi)^d \eta^d (\frac{1}{2}\lambda_m)^d \right)^{-\frac{1}{2}} \exp\left(-\frac{|z|^2} {\lambda_m\eta}\right)
\left(\frac{2\lambda_M}{\lambda_m} \right)^{-\frac{d}{2}} \exp\left(-\frac{\lambda_m^{-1}} {2\eta}(4|x|^2 + 8\tl{C}_{\rm op}^2 \eta^2(1+|x|^2)) \right).
\end{eqnarray*}
Thus, inequality \eqref{e:petite} holds by taking
\begin{eqnarray*}
\nu(z) &=& \left((2\pi)^d \eta^d (\frac{1}{2}\lambda_m)^d \right)^{-\frac{1}{2}} \exp\left(-\frac{|z|^2} {\lambda_m\eta}\right),
\end{eqnarray*}
and
\begin{eqnarray*}
c&=& \inf_{ x \in \mathscr{C}}  \left\{ \left( \frac{2\lambda_M}{\lambda_m} \right)^{-\frac{d}{2}} \exp\left(-\frac{\lambda_m^{-1}} {2\eta}(4|x|^2 + 8\tl{C}_{\rm op}^2 \eta^2(1+|x|^2))\right) \right \}  >0
\end{eqnarray*}
for compact set $\mathscr{C}$. The proof is complete.
\end{proof}

\begin{proof}[Proof of Lemma \ref{lem:AV2}]
(i) Recall that $\mathcal{A} V(x)\leq -c_1V(x)+\breve{c}_1$ for all $x\in \R^d$  in \eqref{e:AV} and the function $V$ in \eqref{e:Lypfun}.
Combining \eqref{e:BV}, \eqref{e:NV} and using the Young's inequality, we obtain that there exist some positive constants $\breve{c}_{\ell}$ such that for all $x\in \R^d$ and integers $\ell \geq 1$, 
\begin{eqnarray*}
\mathcal{A}V^{\ell}(x)
&=& \ell V^{\ell-1}(x) \mathcal{A}V(x) + \frac{\ell(\ell-1)}{2}V^{\ell-2}(x) \langle \nabla V(x)(\nabla V(x))^{\prime}, \sigma \sigma^{\prime} \rangle_{\rm HS} \\
&\leq & -c_1 \ell V^{\ell}(x)+\breve{c}_1 \ell V^{\ell-1}(x) + \frac{\ell(\ell-1)}{2}V^{\ell-2}(x) \langle \nabla V(x)(\nabla V(x))^{\prime}, \sigma \sigma^{\prime} \rangle_{\rm HS} \\
&\leq&  -c_1 V^{\ell}(x) + \breve{c}_{\ell}.
\end{eqnarray*}
Using It\^{o}'s formula, we know for all $t\geq 0$, 
\begin{eqnarray*}
\mathbb{E} V^{\ell}(X_t^{x})
&=& V^{\ell}(x)+\int_0^t\mathbb{E} \mathcal{A}V^{\ell}(X_s^{x})  \dif s
\ \leq \  V^{\ell}(x)+\int_0^t ( -c_1\mathbb{E} V^{\ell}(X_s^{x})+\breve{c}_{\ell}) \dif s.
\end{eqnarray*}
This implies that (see Gurvich \cite[proof of Lemma 7.2]{Gur1})
\begin{eqnarray*}
\mathbb{E} V^{\ell}(X_t^{x})
&\leq& e^{-c_1 t }V^{\ell}(x)+\frac{\breve{c}_{\ell}(1-e^{-c_1t})}{c_1},
\quad \forall t\geq 0.
\end{eqnarray*}

Let $\chi: [0,\infty) \to [0,1]$ be a continuous function such that $\chi(r)=1$ for $0\leq r \leq 1$ and $\chi(r)=0$ for $r\geq 2$ and $L>0$ be a large number. It follows from \eqref{e:Vm} that
\begin{eqnarray*}
\mathbb{E} \left[ V^{\ell}(X_t^{x}) \chi\left(\frac{|X_t^{x}|}{L}\right) \right]
&\leq& e^{-c_1 t }V^{\ell}(x)+ \frac{\breve{c}_{\ell} (1-e^{-c_1 t})}{c_1}.
\end{eqnarray*}

First, taking $t\to \infty$, we have
\begin{eqnarray*}
\int_{\R^d} V^{\ell}(x) \chi\left(\frac{|x|}{L}\right) \mu(\dif x)
&\leq&  \frac{\breve{c}_{\ell}}{c_1},
\end{eqnarray*}
then as $L \to \infty$, one knows
\begin{eqnarray*}
\int_{\R^d} V^{\ell}(x) \mu(\dif x) &\leq&  \frac{\breve{c}_{\ell}}{c_1}.
\end{eqnarray*}
Combining with \eqref{e:BV}, we can get the inequality $\mu(|\cdot|^{2\ell})\leq C$ where $C$ depends on $\ell$.

(ii) With similar calculations from Dieker and Gao \cite[proof of Theorem 3]{DG1} and combining \eqref{e:Vm}, we obtain that there exist some positive constants $c$ and $C$, independent of $t$, such that for any positive integers $\ell$, 
\begin{eqnarray*}
\| P^*_{t}\nu-\mu \|_{\rm{TV}, \rm{V}^{\ell} }
&\leq& C (1+\nu(V^{\ell}))e^{-c t}.
\end{eqnarray*}
Thus, the exponential ergodicity for $(X_t)_{t\geq 0}$ in Wasserstein-1 distance holds from \eqref{e:dWandTV} and \eqref{e:BV} by taking $\ell=1$. Furthermore, one has the following inequality $\| P_{t}^* \nu-\mu \|_{ \rm{TV}} \leq   \| P_{t}^* \nu-\mu \|_{ \rm{TV}, \rm{V} }$ under $V(x)\geq 0$ for all $x\in \R^d$. The proof is complete.
\end{proof}

\begin{proof}[Proof of Lemma \ref{lem:GePe}]
(i) Since $\tl{X}_{1}^{\eta,x}$ has the same law as Gaussian distribution $\mathcal{N}(x+g(x)\eta, \eta\sigma\sigma^{\prime})$ with the density function $p(\eta,x,z)$ in \eqref{e:pe}, one has
\begin{eqnarray*}
\tilde{\mathcal{P}}_{\eta}f(x)
&=& \E f(\tl{X}_{1}^{\eta,x})
= \int_{\R^d} f(z)p(\eta, x, z) \dif z.
\end{eqnarray*}
Thus, 
\begin{eqnarray*}
\nabla \tilde{\mathcal{P}}_{\eta}f(x)
&=& \int_{\R^d} \eta^{-1} f(z)(I+\nabla g(x)\eta)(\sigma\sigma^{\prime})^{-1} (z-x-g(x)\eta) p(\eta,x,z) \dif z.
\end{eqnarray*}
It implies that
\begin{eqnarray*}
|\nabla \tilde{\mathcal{P}}_{\eta}f(x)|
&\leq& \int_{\R^d} \eta^{-1} |f(z)| \|I+\nabla g(x)\eta\|_{\rm op} \| \sigma^{-1} \|^2_{\rm op}|z-x-g(x)\eta| p(\eta,x,z) \dif z \\
&\leq& \| f \|_{\infty} (1+C_{\rm op}\eta) \| \sigma^{-1} \|^2_{\rm op} \| \sigma \|_{\rm op}\eta^{-\frac{1}{2}}d^{\frac{1}{2}},
\end{eqnarray*}
where the second inequality holds from that $\tl{X}_{1}^{\eta,x}-x-g(x)\eta$ has the same law as Gaussian distribution $\mathcal{N}(0,\eta\sigma\sigma^{\prime})$.

(ii) For any $x,y\in \R^d$ and $r>0$, it follows from \eqref{e:reXD} that
\begin{eqnarray*}
\tl{\mathcal{P}}_{\eta}(x,B(y,r))
&=& \PP(\tl{X}_{1}^{\eta,x} \in B(y,r) )
\ = \ \PP(x+g(x)\eta+\sigma B_{\eta} \in B(y,r)) \\
&=&\PP(\sigma B_{\eta} \in B(y-x- g(x)\eta,r))>0,
\end{eqnarray*}
where $B_{\eta}$ has the same law as Gaussian distribution $\mathcal{N}(0,\eta I)$. 

Assuming that  $\tilde{\mathcal{P}}_{\eta}^k(x,B(y,r))>0$ for any $x,y\in\R^d$, $r>0$ and some integer $k$, one has
\begin{eqnarray*}
\tilde{\mathcal{P}}_{\eta}^{k+1}(x,B(y,r))
=\int_{\R^d} \tilde{\mathcal{P}}_{\eta}^{k}(x,z) \tilde{\mathcal{P}}_{\eta}(z,B(y,r)) \dif z
>0.
\end{eqnarray*}
Thus, the irreducibility holds by induction.
\end{proof}

\begin{proof}[Proof of Lemma \ref{lem:Xgesm}]

(i) Combining with Lemma \ref{lem:AV2} and with similar calculations for \eqref{e:Vm}, one has
\begin{eqnarray}\label{e:V4}
\E V^{\ell}(X^{x}_{\eta})
&\leq& e^{-c_1 \eta} V^{\ell}(x) + \frac{\breve{c}_{\ell}}{c_1}(1-e^{-c_1 \eta}).
\end{eqnarray}

From $(X_t)_{t \geq 0}$ in SDE \eqref{hSDEg} and $0<s<1$, one has
\begin{eqnarray*}
X^{x}_s &=& x + \int_0^s  g(X^{x}_u) \dif u +\sigma B_s.
\end{eqnarray*}
Using the H\"{o}lder inequality and the estimate $|g(x)| \leq \tl{C}_{{\rm op}}(1+|x|)$ for all $x\in \R^d$ with $\tl{C}_{{\rm op}}$ in \eqref{tlCop}, one obtains that there exists some positive constant $\tl{C}_{\ell}$ depending on $\ell$ such that
\begin{eqnarray*}
|X^{x}_s-x|^{2\ell}
&=& \left| \int_0^s g(X^{x}_u)\dif u+\sigma B_s \right|^{2\ell}
\  \leq \   \tl{C}_{\ell} s^{2\ell-1} \int_0^s  (1+V^{\ell}(X^{x}_u) ) \dif u + \tl{C}_{\ell}| B_s |^{2\ell},
\end{eqnarray*}
where the last inequality holds from \eqref{e:BV}. Combining with \eqref{e:V4}, there exists some positive constant $\tl{C}_{\ell}$ depending on $\ell$ such that
\begin{eqnarray}\label{e:Exe}
\E |X^{x}_s -x |^{2\ell}
&\leq& \tl{C}_{\ell} s^{2\ell-1} \int_0^s  (1+\E V^{\ell}(X^{x}_u) ) \dif u + \tl{C}_{\ell} \E | B_s |^{2\ell}  \nonumber \\
& \leq &  \tl{C}_{\ell} s^{\ell}(1+V^{\ell}(x)),
\end{eqnarray}
where the second inequality holds from \eqref{e:V4} and the last inequality holds for $0<s<1$.

(ii) From  $(X_t)_{t\geq 0}$ and $(\tl{X}^{\eta}_{k})_{k\in \mathbb{N}_0}$ in \eqref{hSDEg} and \eqref{e:reXD}, respectively, one has
\begin{eqnarray*}
X^{x}_{\eta}  - \tl{X}^{\eta,x}_{1}
&=& \int_0^{\eta} ( g(X^{x}_s) - g(x) ) \dif s.
\end{eqnarray*}
Using the H\"{o}lder inequality and the estimate $\| \nabla g(x) \|_{{\rm op}}\leq C_{{\rm op}}$ for all $x\in \R^d$ with $C_{{\rm op}}$ in \eqref{Cop}, one obtains that there exists some positive constant $\tl{C}_{\ell}$ depending on $\ell$ not on $\eta$ such that
\begin{eqnarray*}
|X^{x}_{\eta}  - \tl{X}^{\eta,x}_{1} |^{2\ell}
&\leq& \left| \int_0^{\eta} ( g(X^{x}_s) - g(x) ) \dif s \right|^{2\ell}
\ \leq \ \tl{C}_{\ell} \eta^{2\ell-1} \int_0^{\eta} |X^{x}_s - x|^{2\ell} \dif s .
\end{eqnarray*}
Combining this with \eqref{e:Exe}, one obtains that there exists some positive constant $\tl{C}_{\ell}$ depending on $\ell$ not on $\eta$ such that
\begin{eqnarray*}
\E |X^{x}_{\eta} - \tl{X}^{\eta,x}_{1} |^{2\ell}
&\leq& \tl{C}_{\ell} \eta^{2\ell-1} \int_0^{\eta} s^{\ell}(1+V^{\ell}(x)) \dif s
\ \leq \ \tl{C}_{\ell} (1+V^{\ell}(x))\eta^{3\ell}.
\end{eqnarray*}
The proof is complete.
\end{proof}

\begin{proof}[Proof of Lemma \ref{lem:PXeD}]

We use the method from Mattingly et al. \cite[Theorem 7.2]{MSH1} to get the desired inequality. For $V$ in \eqref{e:Lypfun}, one has
\begin{eqnarray*}
\E V^{\ell}(\tl{X}^{\eta,x}_{1})
&\leq& \E V^{\ell}(X^{x}_{\eta})+\E| V^{\ell}(\tl{X}^{\eta,x}_{1})-V^{\ell}(X^{x}_{\eta})|.
\end{eqnarray*}
Let $\tl{C}_{\ell}$ be some constants depending on $\ell$ but not on $\eta$, whose values may vary from line to line.

We claim that for small $\eta \in (0,e^{-1})$,
\begin{eqnarray}\label{e:V-V4}
\E |V^{\ell}(\tl{X}^{\eta,x}_{1}) -V^{\ell}(X^{x}_{\eta})|
&\leq& \tl{C}_{\ell} (1+V^{\ell}(x))\eta^{\frac{3}{2}}.
\end{eqnarray}
Combining this with \eqref{e:V4}, one has
\begin{eqnarray*}
\E V^{\ell}(\tl{X}^{\eta,x}_{1})
&\leq& \E V^{\ell}(X^{x}_{\eta}) + \E | V^{\ell}(\tl{X}^{\eta,x}_{1}) -V^{\ell}(X^{x}_{\eta})| \\
&\leq& e^{-c_1 \eta} V^{\ell}(x) +  \frac{\breve{c}_{\ell}}{c_1}(1-e^{-c_1\eta}) + \tl{C}_{\ell} \eta^{\frac{3}{2}} (1+V^{\ell}(x))  \\
&\leq& (e^{-c_1 \eta} + \tl{C}_{\ell} \eta^{\frac{3}{2}} ) V^{\ell}(x) + \frac{\breve{c}_{\ell}}{c_1}(1-e^{-c_1\eta}) + \tl{C}_{\ell} \eta^{\frac{3}{2}},
\end{eqnarray*}
which implies
\begin{eqnarray*}
\tilde{\mathcal{P}}_{\eta} V^{\ell}(x)
&\leq& \check{\gamma}_{\ell} V^{\ell}(x)+\check{K}_{\ell},
\end{eqnarray*}
with $\check{\gamma}_{\ell}=e^{-c_1 \eta} + \tl{C}_{\ell} \eta^{\frac{3}{2}}$ and $\check{K}_{\ell}= \frac{\breve{c}_{\ell}}{c_1}(1-e^{-c_1\eta}) + \tl{C}_{\ell} \eta^{\frac{3}{2}}$ for small enough $\eta\in (0,e^{-1})$.

It remains to show the claim \eqref{e:V-V4} holds.
Since
\begin{eqnarray*}
&& V^{\ell}(\tl{X}^{\eta,x}_{1})-V^{\ell}(X^{x}_{\eta})  \\
&=& \ell \int_0^1 (\tl{X}^{\eta,x}_{1}-X^{x}_{\eta})^{\prime} \nabla V( X^{x}_{\eta} + r(\tl{X}^{\eta,x}_{1}-X^{x}_{\eta})  ) V^{\ell-1}( X^{x}_{\eta} + r(\tl{X}^{\eta,x}_{1}-X^{x}_{\eta})  )   \dif r,
\end{eqnarray*}
it follows from \eqref{e:BV} that
\begin{eqnarray*}
|V^{\ell}(\tl{X}^{\eta,x}_{1})-V^{\ell}(X^{x}_{\eta})|
&\leq& \tl{C}_{\ell} \int_0^1 [1+|X^{x}_{\eta} + r(\tl{X}^{\eta,x}_{1}-X^{x}_{\eta})|]^{2\ell-1}  |\tl{X}^{\eta,x}_{1}-X^{x}_{\eta}| \dif r  \\
& \leq & \tl{C}_{\ell} (1+|X^{x}_{\eta}|^{2\ell-1})| \tl{X}^{\eta,x}_{1}-X^{x}_{\eta}| +\tl{C}_{\ell} |\tl{X}^{\eta,x}_{1}-X^{x}_{\eta}|^{2\ell}.
\end{eqnarray*}
By the H\"{o}lder inequality and \eqref{e:BV}, one has
\begin{eqnarray*}
&& \E |V^{\ell}(\tl{X}^{\eta,x}_{1}) -V^{\ell}(X^{x}_{\eta})|  \\
&\leq& \tl{C}_{\ell}(\E [1+|X^{x}_{\eta}|^{2\ell-1}] ^{\frac{2\ell}{2\ell-1}})^{\frac{2\ell-1}{2\ell}} [\E |\tl{X}^{\eta,x}_{1}-X^{x}_{\eta}|^{2\ell}] ^{\frac{1}{2\ell}}+ \tl{C}_{\ell} \E |\tl{X}^{\eta,x}_{1}-X^{x}_{\eta}| ^{2\ell} \\
& \leq& \tl{C}_{\ell}(\E [1+V^{\ell}(X^{x}_{\eta})]) ^{\frac{2\ell-1}{2\ell}} [\E |\tl{X}^{\eta,x}_{1}-X^{x}_{\eta}|^{2\ell}] ^{\frac{1}{2\ell}}
+ \tl{C}_{\ell} \E |\tl{X}^{\eta,x}_{1}-X^{x}_{\eta}|^{2\ell}.
\end{eqnarray*}
Combining this with \eqref{e:V4} and Lemma \ref{lem:Xgesm},  for small $\eta \in (0,e^{-1})$, we have 
\begin{eqnarray*}
&& \E |V^{\ell}(\tl{X}^{\eta,x}_{1})-V^{\ell}(X^{x}_{\eta})|  \\
&\leq& \tl{C}_{\ell} \left(1+ e^{-c_1 \eta}V^{\ell}(x) +  \frac{\breve{c}_{\ell}}{c_1} \right)^{\frac{2\ell-1}{2\ell}} \eta^{\frac{3}{2}}(1+V^{\ell}(x))^{\frac{1}{2\ell}} + \tl{C}_{\ell} (1+V^{\ell}(x))\eta^{3\ell} \\
&\leq&\tl{C}_{\ell} (1+V^{\ell}(x)) \eta^{\frac{3}{2}}.
\end{eqnarray*}
The proof is complete.
\end{proof}

\section{The proofs for the lemmas and propositions in Section \ref{s:CLTMDP}} \label{sec:AAS}

\subsection{The proof of Proposition \ref{lem:regf}} 
\begin{proof}[Proof of Proposition \ref{lem:regf}]
(i) It follows from Eq. \eqref{e:SE1} and Lemma \ref{lem:AV2} that
	\begin{eqnarray}\label{e:SE2}
|f(x)| & \leq &  \int_0^{\infty} |P_t h(x) - \mu(h) |  \dif t
\ \leq \  \|h\|_{\infty} \int_0^{\infty} \| P^*_{t}\delta_x - \mu \|_{ \rm{TV}, \rm{V} }   \dif t  \nonumber \\
&\leq& C \|h\|_{\infty}(1+V(x)  )
\  \leq \   C \|h\|_{\infty}(1+|x|^2),
	\end{eqnarray}
where the last inequality holds from the relationship between $V$ and $|\cdot|^2$ in \eqref{e:BV}.

(ii) First, let  $h \in \mathcal{C}_b^1(\R^d,\R)$, we have $\nabla_u\E[h(X_t^x)]=\E[\nabla_u h(X_t^x)]$ by Lebesgue's dominated convergence theorem. Then we consider the term $ \nabla_u [e^{-\lambda t} \E f(X_t^x)]$.

	It follows from Lemma \ref{hLef2} that
	\begin{eqnarray*}
		\E [\nabla_u f(X_t^x)]
		&=& \E[\nabla f(X_t^x) \nabla_u X_t^x]
		\ = \ \E[\nabla f(X_t^x) D_{\mathbb{V}} X_t^x] \\
		&= & \E[D_{\mathbb{V}} f(X_t^x)]
	\ = \ \E[f(X_t^x)\mathcal{I}_{u}^x(t)].
	\end{eqnarray*}
From \eqref{e:SE2} and the estimate for $\E V^2(X_t^x)$ in \eqref{e:Vm} of Lemma \ref{lem:AV2}, one has
	\begin{eqnarray}\label{e:f2}
[ \E |f(X_t^x)|^2 ]^{\frac{1}{2}}
\ \leq \ C [ 1+ \E V^2(X_t^x)] ^{\frac{1}{2}}
\ \leq \ C e^{\frac{C_4}{2} t}(1+|x|^2).
	\end{eqnarray}
Combining this with the estimate for  $\E | \mathcal{I}_{u}^x(t)|^2$ in \eqref{e:IuexEst} and using the H\"{o}lder inequality, one has
	\begin{eqnarray*}
\E|f(X_t^x)\mathcal{I}_{u}^x(t)|
&\leq& [\E f^2 (X_t^x) ]^{\frac{1}{2}} [\E |\mathcal{I}_{u}^x(t) |^2 ]^{\frac{1}{2}}
\ \leq \ C e^{\frac{C_4}{2} t }(1+ |x|^2)  |u| t^{-\frac{1}{2}} e^{C_{{\rm op}}t}
\ < \  \infty.
	\end{eqnarray*}
With similar calculations, one has
	\begin{eqnarray*}
		\E|h(X_t^x)\mathcal{I}_{u}^x(t)|
		&\leq&  \| h \|_{\infty}  \E | \mathcal{I}_{u}^x(t) | 		
		\ < \  \infty.
	\end{eqnarray*}
	Here we fix $x$ and $t>0$. Then by Lebesgue's dominated convergence theorem, we have
	\begin{eqnarray*}
		\nabla_u\E[e^{-\lambda t}f(X_t^x)] \ =\ \E[e^{-\lambda t}\nabla_u f(X_t^x)].
	\end{eqnarray*}
	
	It follows from the H\"{o}lder inequality that
	\begin{eqnarray}\label{e:Nfx0}
	&& \int_0^{\infty} \left|e^{-\lambda t} (  \lambda  \nabla_{u}\E [ f (X_t^x)  ]  -  \nabla_u\E [   h(X_t^x)] )\right| \dif t \nonumber \\
	&=& \int_0^{\infty}\left| e^{-\lambda t} (  \lambda  \E [ f (X_t^x)  \mathcal{I}_{u}^x(t) ]  -  \E [ h(X_t^x) \mathcal{I}_u^x(t) ] ) \right|\dif t   \nonumber  \\
	 &\leq&  \int_0^{\infty} e^{-\lambda t}   \lambda  [ \E f^2 (X_t^x) ]^{\frac{1}{2}} [ \E | \mathcal{I}_{u}^x(t)  |^2 ]^{\frac{1}{2}}    \dif t
	+  \int_0^{\infty} e^{-\lambda t}   \| h \|_{\infty}  \E [ |\mathcal{I}_u^x(t)| ]  \dif t.
	\end{eqnarray}
	From estimates for  $\E | \mathcal{I}_{u}^x(t)  |^2$ in \eqref{e:IuexEst}  and  $[ \E |f(X_t^x)|^2 ]^{\frac{1}{2}} $ in \eqref{e:f2}, one has
\begin{eqnarray}\label{e:Nfx1}
\int_0^{\infty} e^{-\lambda t}   \lambda  [ \E f^2 (X_t^x) ]^{\frac{1}{2}} [ \E | \mathcal{I}_{u}^x(t)  |^2 ]^{\frac{1}{2}}  \dif t
&\leq& \int_0^{\infty}  \lambda (1+ |x|^2) e^{\frac{C_4}{2} t } |u| t^{-\frac{1}{2}} e^{ (-\lambda + C_{ {\rm op} })t  } \dif t \nonumber  \\
&\leq&  C (1+ |x|^2) |u|,
	\end{eqnarray}
	where the last inequality holds by taking $\lambda\geq \frac{C_4}{2} + C_{ { \rm op} }+1$.

	From estimate for  $\E | \mathcal{I}_{u}^x(t)  |$ in \eqref{e:IuexEst}, one has
	\begin{eqnarray}\label{e:Nfx2}
\int_0^{\infty} e^{-\lambda t}   \|  h \|_{\infty}  \E [ | \mathcal{I}_u^x(t) | ]    \dif t
&\leq& \int_0^{\infty} e^{-\lambda t}  \|  h \|_{\infty} \frac{C|u|}{ t^{1/2} }  e^{C_{ \textrm{op} } t} \dif t
\ \leq \ C\|h\|_{\infty} |u|,
	\end{eqnarray}
	where the last inequality holds by taking $\lambda \geq  C_{ \textrm{op} }+1$.

	Since
	\begin{eqnarray*}
		f(x)=  \int_0^{\infty} e^{-\lambda t} P_t[  \lambda f(x) - h(x) + \mu(h) ] \dif t,  \quad  \forall  \lambda>0,
	\end{eqnarray*}
	by Lebesgue's dominated convergence theorem, we have
	\begin{eqnarray}\label{e:nuf}
		\nabla_u f(x)&=&  \int_0^{\infty}  \nabla_u [\lambda e^{-\lambda t} \E f(X_t^x) - e^{-\lambda t} \E h(X_t^x)] \dif t \nonumber  \\
		&=& \int_0^{\infty} e^{-\lambda t} (  \lambda  \nabla_{u}\E [ f (X_t^x)  ]  -  \nabla_u\E [   h(X_t^x)] ) \dif t   \nonumber \\
		&=&\int_0^{\infty} e^{-\lambda t} (  \lambda  \E [ f (X_t^x)  \mathcal{I}_{u}^x(t) ]  -  \E [  h(X_t^x) \mathcal{I}_{u}^x(t) ] ) \dif t.
	\end{eqnarray}	
	Taking $\lambda= \frac{C_4}{2} + C_{ \textrm{op} } +1$, combining \eqref{e:Nfx0}, \eqref{e:Nfx1} and \eqref{e:Nfx2}, one has
	\begin{eqnarray*}
|\nabla_{u} f(x)|
&\leq& C \| h \|_{\infty} (1+ |x|^2) |u|.
	\end{eqnarray*}

Second, we extend $h\in \mathcal{C}_b^1(\R^d,\R)$ to $h\in \mathcal{B}_b(\R^d,\R)$ by the standard approximation (see Fang et al. \cite[pp. 968-969]{FSX1}). Let $h\in \mathcal{B}_b(\R^d,\R)$, and define
	\begin{eqnarray*}
		h_{\delta}(x) = \int_{\R^d} \varphi_{\delta}(y) h(x-y) \dif y, \quad \delta>0,
	\end{eqnarray*}
where $\varphi_{\delta}$ is the density function of the normal distribution $\mathcal{N}(0,\delta^2 I)$. Thus $h_{\delta}$ is smooth, $\| h_{\delta} \|_{\infty}  \leq \| h\|_{\infty}$ and the solution to the Poisson equation \eqref{e:SE} with $h$ replaced by $h_{\delta}$, is
         \begin{eqnarray*}
	f_{\delta}(x)= - \int_0^{\infty} P_t[   h_{\delta}(x) - \mu(h_{\delta}) ] \dif t.
	\end{eqnarray*}

	Denote $\hat{h}_{\delta} = -h_{\delta} + \mu(h_{\delta})$.  Since $h_{\delta} \in \mathcal{C}_b^1(\R^d,\R)$, one has $h_{\delta}(x) \leq \|h_{\delta}\|_{\infty}( 1+ V(x))$ for all $x\in \R^d$. From Lemma \ref{lem:AV2}, one has
	\begin{eqnarray*}
|P_t h_{\delta}(x) - \mu(h_{\delta}) |
&\leq& \|h_{\delta}\|_{\infty} \| P^*_{t}\delta_x - \mu \|_{\rm{TV},\rm{V}}
\ \leq \ C \|h_{\delta}\|_{\infty}(1+V(x)) e^{-ct}.
	\end{eqnarray*}
With similar calculations in the proof for Lemma \ref{prop:ST}, one has
	\begin{eqnarray*}
|f_{\delta}(x)|
&\leq&  C \|h_{\delta}\|_{\infty}(1+ |x|^2 ).
\end{eqnarray*}
By the dominated convergence theorem, one has
	\begin{eqnarray*}
	\lim_{\delta \to 0} f_{\delta}(x) \ = \ - \int_0^{\infty} P_t[   h(x) - \mu(h) ] \dif t
	\ = \ f(x).
	\end{eqnarray*}
From \eqref{e:SE2} and $\| h_{\delta} \|_{\infty} \leq \| h \|_{\infty}$, we know
	\begin{eqnarray*}
| f_{\delta}(x)|
&\leq& C \|h_{\delta}\|_{\infty}( 1 + |x|^2 )
\ \leq \ C \|h\|_{\infty}(1+ |x|^2).
\end{eqnarray*}
Let $\delta \to 0$, we know
\begin{eqnarray*}
| f(x)|
\ \leq \ C \|h\|_{\infty}( 1 + |x|^2 ).
\end{eqnarray*}

 From calculations for \eqref{e:nuf}, one has
	\begin{eqnarray*}
\nabla_u f_{\delta}(x)
&=&\int_0^{\infty} e^{-\lambda t} (  \lambda  \E [ f_{\delta} (X_t^x)  \mathcal{I}_{u}^x(t) ]  -  \E [  h_{\delta}(X_t^x) \mathcal{I}_{u}^x(t) ] ) \dif t.
	\end{eqnarray*}
With similar calculations for the proof of \eqref{e:SE2}, one has
\begin{eqnarray*}
|\nabla_{u} f_{\delta}(x)|
&\leq&  C \| h_{\delta} \|_{\infty}  (1+ |x|^2) |u|
\ \leq \ C \| h \|_{\infty}(1+|x|^2) |u|.
\end{eqnarray*}
	Since the operator $\nabla$ is closed (see Partington \cite[Theorem 2.2.6]{PJR1}), it follows from the dominated convergence theorem that
	\begin{eqnarray*}
	         \lim_{\delta \to 0} \nabla_u f_{\delta}(x)
	         &=& \lim_{\delta \to 0}  \int_0^{\infty} e^{-\lambda t} (  \lambda  \E [ f_{\delta} (X_t^x)  \mathcal{I}_{u}^x(t) ]  -  \E [  h_{\delta}(X_t^x) \mathcal{I}_{u}^x(t) ] ) \dif t \\
	         &=&  \int_0^{\infty} e^{-\lambda t} (  \lambda  \E [ f (X_t^x)  \mathcal{I}_{u}^x(t) ]  -  \E [  h(X_t^x) \mathcal{I}_{u}^x(t) ] ) \dif t \\
	         &=& \nabla_u f(x).
	\end{eqnarray*}
	 Letting $\delta \to 0$, we obtain
     \begin{eqnarray*}
|\nabla_{u} f(x)|
&\leq&  C \| h \|_{\infty}  (1+ |x|^2) |u|.
	\end{eqnarray*}
The proof is complete.
\end{proof}

\begin{proof}[Proof of Lemma \ref{prop:ST}]
We first show that $\int_0^{\infty}[P_t h(x) - \mu(h)] \dif t$ is well defined. Denote $\hat{h} = -h + \mu(h)$. For  any $h \in \mathcal{B}_b(\R^d,\R)$, we have that  $h(x) \leq \|h\|_{\infty}( 1+ V(x))$ for all $x\in \R^d$, and from Lemma \ref{lem:AV2} that
	\begin{eqnarray*}
	|P_t h(x) - \mu(h) | \ \leq \ \|h\|_{\infty} \| P^*_{t}\delta_x - \mu    \|_{ \rm{TV}, \rm{V} }
	\ \leq \ C \|h\|_{\infty}(1+ V(x)) e^{-c t}.
	\end{eqnarray*}
This implies that
	\begin{eqnarray*}
\left| \int_0^{\infty}[P_t h(x)-\mu(h)] \dif t \right|
\ \leq \
C \|h\|_{\infty}(1+V(x))
\ < \ \infty.
	\end{eqnarray*}
The reminder is similar to that of {Fang et al. \cite[Proposition 6.1]{FSX1}.} The proof is complete.
\end{proof}

\medskip 

\subsection{Properties of the mollified diffusion}

\begin{proof}[Proof of Lemma \ref{lem:XXem}]
(i) Using It\^o's formula, for any integers $m \geq 2$, we have
\begin{eqnarray*}
\E |X^{x}_{t}|^{m}&=&|x|^{m}+m \mathbb{E} \int_{0}^{t} |X^{x}_{s}|^{m-2} (X^{x}_{s})' g(X^{x}_{s}) \dif s \\
&&+\frac m2 \mathbb{E} \int_{0}^{t} |X^{x}_{s}|^{m-4}\left[(m-2)|\sigma' X^{x}_{s}|^{2}+ { \rm tr} (\sigma \sigma') |X^{x}_{s}|^{2} \right] \dif s.
\end{eqnarray*}
By the bound $|g(x)| \leq \tl{C}_{\rm op}(1+|x|)$ for all $x\in \R^d$ with $\tl{C}_{{\rm op}}$ in \eqref{tlCop}, we further get
\begin{eqnarray*}
\E |X^{x}_{t}|^{m}& \leq &|x|^{m}+\tilde{C}_{\rm op} \left(  m\int_{0}^{t} \E |X^{x}_{s}|^{m} \dif s+  m\int_{0}^{t} \E |X^{x}_{s}|^{m-1} \dif s+ m^2\int_{0}^{t} \E |X^{x}_{s}|^{m-2} \dif s\right)  \nonumber \\
&\le & |x|^{m}+2m^2 \tilde{C}_{\rm op} \left(  \int_{0}^{t} \E |X^{x}_{s}|^{m} \dif s+t \right),
\end{eqnarray*}
where the second inequality is by the Young's inequality. Thus, we have
\begin{eqnarray*}
\E |X^{x}_t|^{m}  \ \le \  e^{C_m t}(|x|^{m}+1),
\end{eqnarray*}
where $C_m=2m^2 \tl{C}_{\rm op}$. Since $|g_\e(x)| \leq  \tl{C}_{\rm op}(1+|x|)$ uniformly for $\e \in (0,1)$, the moment estimates $\E |X^{\e,x}_t|^{m}$ can be obtained similarly. 
Thus, inequality \eqref{e:XXem} holds.

(ii) Consider $X^{\e,x}_{t}-X_{t}^{x}$, which satisfies the following equation
\begin{eqnarray*}
\frac{\dif}{\dif t} \left(X^{\e,x}_{t}-X_{t}^{x}\right)&=&g_\e (X^{\e,x}_t)-g(X^{x}_t) \\
&=&g_\e (X^{\e,x}_t)-g_{\e}(X^{x}_t)+g_{\e}(X^{x}_t)-g(X^{x}_{t}), \\
&=&\nabla g_{\e}(\theta_{t})  \left(X^{\e,x}_{t}-X_{t}^{x}\right)+g_{\e}(X^{x}_t)-g(X^{x}_{t}),
\end{eqnarray*}
where $\theta_{t}$ is between $X^{x}_t$ and $X^{\e,x}_t$. The above equation can be solved by
\begin{eqnarray*}
X^{\e,x}_{t}-X_{t}^{x}&=&\int_{0}^{t} \exp\left(\int_{s}^{t} \nabla g_{\e}(\theta_{r}) \dif r \right) (g_{\e}(X^{x}_s)-g(X^{x}_{s})) \dif s.
\end{eqnarray*}
Since $\|\nabla g_{\e}(x)\|_{ {\rm op} } \le C_{{\rm op}}$ for all $x \in \R^d$, the  relation $|g_{\e}(x)-g(x)| \le C_{{\rm op}}\e$ for all $x \in \R^d$ immediately gives us \eqref{e:XeCon-1}.
\end{proof}

\begin{proof}[Proof of Lemma \ref{l:XeCon}]
Denote the event
$$N=\left\{\int_{0}^{\infty} \|\nabla g_{\e}(X^{\e,x}_{s})\|_{ {\rm op} }1_{\{{\rm e}^{\prime}  X^{x}_{s}=0\}} \dif s\ne 0\right\}.
$$
We claim that
\Be  \label{e:PN=0}
\PP(N) \ = \ 0.
\Ee
Indeed, for any $T>0$, by $\|\nabla g_{\e}(x)\|_{{\rm op}} \le C_{{\rm op}}$ for all $x \in \R^d$ and $\e$,  we have
\begin{eqnarray}\label{e:ET}
\E \int_{0}^{T} \|\nabla g_{\e}(X^{\e,x}_{s})\|_{ {\rm op} }1_{\{{\rm e}^{\prime}  X_{s}^x=0\}} \dif s
&=& \int_{0}^{T} \E[\|\nabla g_{\e}(X^{\e,x}_{s})\|_{{\rm op}}1_{\{{\rm e}^{\prime}  X_{s}^x=0\}}] \dif s  \nonumber \\
& \leq &   \int_{0}^{T}  C_{{\rm op}}  \E 1_{\{{\rm e}^{\prime}  X_{s}^x=0\}} \dif s  \nonumber \\
&=& 0,
\end{eqnarray}
where the last equality is by Proposition \ref{lem:occupation} and the fact that for any small $\eps>0$,
\begin{eqnarray*}
 \E\int_0^T  1_{\{{\rm e}^{\prime}  X_{s}^x=0\}} \dif s
& \leq &  \E L_T^{\eps,x}
 \ \leq \ C\eps e^{\frac{C_2}{2} T} (1+|x|)(1+T),
\end{eqnarray*}
while $L_t^{\eps,x} = \int_0^t [-\frac{1}{\eps^2} ({\rm e}^{\prime} X_s^x)^2 +1 ] 1_{ \{ |{\rm e}^{\prime} X_s^x| \leq \eps \} } \dif s$. The above inequality holds for any $\eps>0$, we know $  \E\int_0^T  1_{\{{\rm e}^{\prime}  X_{s}^x=0\}} \dif s = 0$.

Since \eqref{e:ET} holds for all $T>0$, we see that
\begin{eqnarray*}
\E \int_{0}^{\infty} \|\nabla g_{\e}(X^{\e,x}_{s})\|_{ {\rm op} }1_{\{{\rm e}^{\prime}  X^{x}_{s}=0\}} \dif s
& = & 0,
\end{eqnarray*}
hence \eqref{e:PN=0} holds.

Recall the definition of $J^{\e,x}_{s,t}$ and define
$$\hat J^{\e,x}_{s,t}:=\exp \left(\int_s^t \nabla g_{\e} (X_{r}^{\e,x}) 1_{\{{\rm e}^{\prime}  X^{x}_{r} \ne 0\}}\dif r \right).
$$
It is easy to verify that
\Be  \label{e:JstConN}
\lim_{\e \rightarrow 0} \hat J^{\e,x}_{s,t}=J^{x}_{s,t}, \ \ \ \ \ \ 0 \le s \leq t<\infty.
\Ee
For any $\omega \notin N$, we know $\int_{0}^{\infty} \nabla g_{\e}(X^{\e,x}_{s})1_{\{{\rm e}^{\prime}  X^{x}_{s}=0\}} \dif s=0$ and thus
$$\exp\left(\int_{s}^{t} \nabla g_{\e}(X^{\e,x}_{r})1_{\{{\rm e}^{\prime}  X^{x}_{r}=0\}} \dif r \right)=I, \ \ \ \ 0 \le s \leq t<\infty.$$
Since $I$ commutes with any matrix, for all $\omega \notin N$, we get
$$\hat J^{\e,x}_{s,t}=\hat J^{\e,x}_{s,t} \exp \left(\int_{s}^{t} \nabla g_{\e}(X^{\e,x}_{r})1_{\{{\rm e}^{\prime}  X^{x}_{r}=0\}} \dif r \right)=J^{\e,x}_{s,t}, \ \ \ \  \ 0 \le s \leq t<\infty.$$
This, combining with \eqref{e:JstConN}, implies that for all $\omega \notin N$,
$$\lim_{\e \rightarrow 0}  J^{\e,x}_{s,t}=J^{x}_{s,t}, \ \ \ \ \ \ 0 \le s \leq t<\infty.$$
Note that $ J^{\e,x}_{s,t}$ and $J^{x}_{s,t}$ are matrices, hence the above pointwise convergence implies the convergence in operator.
\end{proof}

\subsection{Bismut's formula}

\begin{proof}[Proof of Lemma \ref{lem:EIm}]
Using the Burkholder-Davis-Gundy inequality, we have
\begin{eqnarray*}
\E \left|\mathcal{I}_{u}^{\e,x}(t)\right|^m
&\leq &\frac{C}{t^m}\E \left(\int_0^t  |\sigma^{-1}J^{\e,x}_{r} u|^2 \dif r \right)^{m/2}
\le  \frac{ C }{t^m} \E \left(\int_0^t \|\sigma^{-1}\|^{2}_{ {\rm op} } \|J^{\e,x}_{r}\|^{2}_{ {\rm op} }  |u|^{2}\dif r \right)^{m/2},
\end{eqnarray*}
which, together with \eqref{e:JJe}, immediately gives \eqref{e:IuexEst}.

For the second relation, by Burkholder-Davis-Gundy inequality, we have
\begin{eqnarray*}
\E |\mathcal{I}_{u}^{\e,x}(t)- \mathcal{I}_{u}^{x}(t)|^m
&=& \E \left|\frac1t\int_0^t \langle \sigma^{-1}(J^{\e,x}_{r}-J^{x}_{r}) u, \dif B_r\rangle \right|^m \\
&\leq &\frac{ C }{t^m} \E \left(\int_0^t  |\sigma^{-1}(J^{\e,x}_{r}-J^{x}_{r}) u|^2 \dif r \right)^{m/2} \ \rightarrow \ 0  { \ \ \rm as \ \ } \e \rightarrow 0,
\end{eqnarray*}
where the limit is by dominated convergence theorem (with a notice of Lemma \ref{l:XeCon}).
\end{proof}

\medskip 
\begin{proof}[Proof of Lemma \ref{hLef2}]
If $\psi\in \mathcal{C}^1(\mathbb{R}^d,\mathbb{R})$, then by \eqref{e:DVNu} and the Bismut's formula \eqref{e:BisFor}, we have
\begin{eqnarray*}
\nabla_{u} \mathbb{E}[\psi(X^{\e,x}_t)] &=& \mathbb{E}[\nabla \psi(X^{\e,x}_t) \nabla_{u} X^{\e,x}_t  ]
\ \ = \ \  \mathbb{E}[\nabla \psi(X^{\e,x}_t) D_{\mathbb{V}}X^{\e,x}_t] \\
&=& \mathbb{E}[D_{\mathbb{V}} \psi(X^{\e,x}_t)]
\ \ = \ \ \mathbb{E} [\psi(X^{\e,x}_t) \mathcal{I}_{u}^{\e,x}(t)],
\end{eqnarray*}
where $\mathbb{V}$ is the direction of Malliavin derivative.

Since the operator $\nabla$ is closed and by the well known property of closed operators (see Partington \cite[Proposition 2.1.4]{PJR1}), as long as it is shown that
\begin{eqnarray}  \label{e:EPhi}
&& \lim_{\e \rightarrow 0} \mathbb{E}[\psi(X^{\e,x}_t)] \ = \ \mathbb{E}[\psi(X^{x}_t)],
\  \ \lim_{\e \rightarrow 0} \mathbb{E} [\psi(X^{\e,x}_t) \mathcal{I}_{u}^{\e,x}(t)]\ =\ \mathbb{E} [\psi(X^{x}_t) \mathcal{I}_{u}^{x}(t)],
\end{eqnarray}
then we know that $\nabla_u\E[\psi(X^x_t)]$ exists and has its value as $\mathbb{E} [\psi(X^{x}_t) \mathcal{I}_{u}^{x}(t)]$. Hence, the first relation is proved.

Before proving \eqref{e:EPhi}, let us show that for all $m \ge 1$,
\begin{eqnarray}  \label{e:PhiPowM}
\mathbb{E} |\psi(X^{\e,x}_t)|^{m},  \mathbb{E} |\psi(X^{x}_t)|^{m} & \le & \hat{C}_1 e^{C_m t} (|x|^{m}+1),
\end{eqnarray}
where $\hat{C}_1$ does not depend on $\e$, $t$ and $x$. (Without loss of generality, we can take $\hat{C}_1$ depending on $m$, $\| \nabla \psi \|_{\infty}$ and $\psi(0)$.) Indeed, it is easily seen that
 \begin{eqnarray*}
\mathbb{E} |\psi(X^{\e,x}_t)|^{m}
& \le & \hat{C}_2 \left(|\psi(0)|^{m}+ \| \nabla \psi \|^{m}_{\infty} \E |X^{\e,x}_t|^{m} \right),
 \end{eqnarray*}
where $\hat{C}_2$ only depends on $m$ and this, together with \eqref{e:XXem}, immediately yields the aimed inequality.

For the first limit in \eqref{e:EPhi}, for all $m \ge 1$, by \eqref{e:XeCon-1}  we have
 \begin{eqnarray}
 \mathbb{E} |\psi(X^{\e,x}_t)-\psi(X^{x}_{t})|^{m} & \le & \| \nabla \psi \|^{m}_{\infty}  \E |X^{\e,x}_t-X^{x}_t |^{m}  \ \ \rightarrow \ 0 \textrm{ \ \ \rm as \ \ } \e \to 0.   \label{e:PhiXXe}
 \end{eqnarray}
 For the second one, we have
 \begin{eqnarray*}
&& |\mathbb{E} [\psi(X^{\e,x}_t) \mathcal{I}_{u}^{\e,x}(t)]-\mathbb{E} [\psi(X^{x}_t) \mathcal{I}_{u}^{x}(t)]|  \\
 &\leq &  \mathbb{E} [|\psi(X^{\e,x}_t)| |\mathcal{I}_{u}^{\e,x}(t)-\mathcal{I}_{u}^{x}(t)|]+\mathbb{E} [|\psi(X^{\e,x}_t)-\psi(X^{x}_t)||\mathcal{I}_{u}^{x}(t)|] \\
 & \rightarrow & 0 { \ \ \rm as  \ \ } \e \rightarrow 0,
 \end{eqnarray*}
where the convergence is by the following argument: applying the Cauchy-Schwarz inequality on the two expectations, and using \eqref{e:IuexEst}, \eqref{e:IueCon}, \eqref{e:PhiPowM} and \eqref{e:PhiXXe}.
\end{proof}

\subsection{Weighted occupation time}
\begin{proof}[Proof of Proposition \ref{lem:occupation}]
Denote
\begin{eqnarray}\label{e:rhobar0}
\phi_{\eps}(y)=
\left\{
\begin{array}{lllll}
\frac{2}{3}\eps y -\frac{1}{4}\eps^2,& \text{if } y>\eps,\\
-\frac{1}{12\eps^2}y^4 + \frac{1}{2}y^2,  & \text{if }   -\eps \leq y\leq \eps, \\
-\frac{2}{3}\eps y-\frac{1}{4}\eps^2, & \text{if } y <-\eps.
\end{array}
\right.
\end{eqnarray}
It is easy to check that
$$\ddot{\phi}_{\eps}(y) \ = \ \left[ \frac{-1}{\eps^2}y^2+1 \right]1_{ \{ |y|\leq \eps \}  },$$
$|\phi_{\eps}(y)|\leq C\eps |y|+C\eps^2$, $|\dot{\phi}_{\eps}(y)|\leq C\eps$ and $|\ddot{\phi}_{\eps}(y)|\leq 1$ for all $y\in \R$ and the positive constant $C$ is independent of $\eps$.

Applying It\^{o}'s formula to the function $\phi_{\eps}$, one has
\begin{eqnarray*}
\phi_{\eps}( {\rm e^{\prime}} X_t^x)
&=& \phi_{\eps}( {\rm e^{\prime} } x)
+\int_0^t \dot{\phi}_{\eps}({\rm e^{\prime}} X_s^x) {\rm e^{\prime}}g(X_s^x) \dif s + \int_0^t \dot{\phi}_{\eps}( {\rm e^{\prime}}X_s^x ) {\rm e^{\prime}}\sigma \dif B_s + \frac{|\sigma^{\prime} {\rm e}|^2}{2} L_t^{\eps,x},
\end{eqnarray*}
which implies that
\begin{eqnarray*}
\E L_t^{\eps,x}  &\leq& C \left[ \E |\phi_{\eps}( {\rm e^{\prime}} X_t^x)|
+|\phi_{\eps}( {\rm e^{\prime} } x)|
+\E \left|\int_0^t \dot{\phi}_{\eps}({\rm e^{\prime}} X_s^x) {\rm e^{\prime}}g(X_s^x) \dif s \right|  \right]   \\
&\leq& C\eps e^{\frac{C_2}{2}t}(1+|x|)(1+t),
\end{eqnarray*}
where the last inequality holds from Lemma \ref{lem:XXem}. The proof is complete.
\end{proof}
\end{appendix}

\section*{Acknowledgments.}
L. Xu is supported in part by NSFC grant (No. 12071499), Macao S.A.R grant FDCT  0090/2019/A2 and University of Macau grant  MYRG2018-00133-FST.
G. Pang is supported in part by  the US National Science Foundation grants DMS-1715875 and DMS-2216765. 
X. Jin was supported in part by the Fundamental Research Funds for the Central Universities grant (JZ2022HGQA0148).






%

\end{document}